\documentclass[1 leqno,11pt,psfig]{amsart}
\usepackage{amssymb, amsmath,amsmath,latexsym,amssymb,amsfonts,amsbsy, amsthm,mathtools,graphicx,CJKutf8,CJKnumb,CJKulem,color}
\usepackage{graphicx,epsfig}
\usepackage{graphicx}
\usepackage{amssymb}
\usepackage{graphicx}
\usepackage{graphicx,epsfig}
\usepackage{amsmath}
\usepackage{mathrsfs}
\usepackage{amssymb}
\usepackage{amssymb,mathrsfs,amsfonts,amsmath,amsbsy,tikz}
\usepackage{fontenc}
\usepackage{textcomp}
\usepackage{amsthm,amsmath,amssymb}
%\usepackage{mathrsfs}

%%%%%%%%%%%%%%%%%%%%%%%%%%%%%%%%%%

%\usepackage{pdfsync}
\setlength{\oddsidemargin}{0mm}
\setlength{\evensidemargin}{0mm} \setlength{\topmargin}{0mm}
\setlength{\textheight}{220mm} \setlength{\textwidth}{155mm}

\numberwithin{equation}{section}

\allowdisplaybreaks
%%%%%%%%%%%%%%%%%%%%%%%%%%%%%%%%%%

%%%My setting%%%%%%%%%%%%%%%%%%%%%%%%%%%%%%%%%%%%%%%%%%%%%%%%%%%

%%%ABREVIATIONS%%%%%%

%%LETTRES RONDES%%

%%MACROS SANS ARGUMENTS%%%%%%%%%%%%%%%%%

\newcommand{\beq}{\begin{equation}}
\newcommand{\eeq}{\end{equation}}
\newcommand{\ben}{\begin{eqnarray}}
\newcommand{\een}{\end{eqnarray}}
\newcommand{\beno}{\begin{eqnarray*}}
\newcommand{\eeno}{\end{eqnarray*}}

%%theorem%%%%%%%%%%%%%%%%%%%%%%%%%%%%%%%%
\newtheorem{theorem}{Theorem}[section]

\newtheorem{lemma}[theorem]{Lemma}
\newtheorem{proposition}[theorem]{Proposition}

\newtheorem{remark}[theorem]{Remark}
\newtheorem{Theorem}{Theorem}[section]

\newtheorem{Corollary}[Theorem]{Corollary}

%%%%%%%%%%%%%%%%%%%%%%%%%%%%%%%%%%%%%%%%%%%%%%

%%%%%%%%%%%%%%end of my setting%%%%%%%%%%%%%%%%%%%%%%%%%%%%%%%%%%%%%%
\begin{document}
\begin{CJK*}{UTF8}{gkai}
\title{Induced dynamics of non-autonomous dynamical systems}

\author{Hua Shao}
\address{Department of Mathematics, Nanjing University of Aeronautics and Astronautics,
 Nanjing 211106, P. R. China}
 \address{Key Laboratory of Mathematical Model ling and High Performance Computing of Air Vehicles,
 Nanjing University of Aeronautics and Astronautics, MIIT, Nanjing 211106, P. R. China}

\email{huashao@nuaa.edu.cn}

\date{\today}

\maketitle

\begin{abstract}
Let $f_{0,\infty}=\{f_n\}_{n=0}^{\infty}$ be a sequence of continuous self-maps on a compact metric space $X$.
The non-autonomous dynamical system $(X,f_{0,\infty})$ induces the set-valued system $(\mathcal{K}(X),
\bar{f}_{0,\infty})$ and the fuzzified system $(\mathcal{F}(X),\tilde{f}_{0,\infty})$. We prove that under some 
natural conditions, positive topological entropy of $(X,f_{0,\infty})$ implies infinite entropy of 
$(\mathcal{K}(X),\bar{f}_{0,\infty})$ and $(\mathcal{F}(X),\tilde{f}_{0,\infty})$, respectively; and zero 
entropy of $(S^1,f_{0,\infty})$ implies zero entropy of some invariant subsystems of 
$(\mathcal{K}(S^1),\bar{f}_{0,\infty})$ and $(\mathcal{F}(S^1),\tilde{f}_{0,\infty})$, respectively. 
We confirm that $(\mathcal{K}(I), \bar{f})$ and $(\mathcal{F}(I), \tilde{f})$ have infinite 
entropy for any transitive interval map $f$. In contrast, we construct a transitive non-autonomous system 
$(I, f_{0,\infty})$ such that both $(\mathcal{K}(I), \bar{f}_{0,\infty})$ and $(\mathcal{F}(I), \tilde{f}_{0,\infty})$ 
have zero entropy. We obtain that $(\mathcal{K}(X),\bar{f}_{0,\infty})$
is chain weakly mixing of all orders if and only if $(\mathcal{F}^1(X),\tilde{f}_{0,\infty})$ is so, and
chain mixing (resp. $h$-shadowing and multi-$\mathscr{F}$-sensitivity) among $(X,f_{0,\infty})$, 
$(\mathcal{K}(X),\bar{f}_{0,\infty})$ and $(\mathcal{F}^1(X),\tilde{f}_{0,\infty})$ are equivalent, 
where $(\mathcal{F}^1(X),\tilde{f}_{0,\infty})$ is the induced normal fuzzification. 
\end{abstract}

{\bf  Keywords}: Non-autonomous dynamical system; hyperspace; fuzzy set; topological entropy; shadowing.

{2010 {\bf  Mathematics Subject Classification}}: 37B55, 37B40, 37C50.

\section{Introduction}
By a non-autonomous dynamical system (simply, NDS) we mean a pair $(X,f_{0,\infty})$, where $X$ is a compact metric space
endowed with a metric $d$ and $f_{0,\infty}=\{f_n\}_{n=0}^{\infty}$ is a sequence of continuous self-maps on $X$.
For any $x_0\in X$, the positive orbit $\{x_n\}_{n=0}^{\infty}$ of $(X,f_{0,\infty})$ starting from $x_0$ is defined
by $x_n=f_0^n(x_0)$, where $f_0^n=f_{n-1}\circ\cdots\circ f_{0}$, $n\geq 1$. In the case that $f_n=f$ for all $n\geq0$,
$(X, f_{0,\infty})$ becomes the classical dynamical system $(X,f)$.

In many cases, it is not enough to only know the trajectory of a single point in $X$. One also needs to know a group of points,
that is, how a set moves. For example, when studying collective behaviors in biology, such as mass migration occurring
in birds or mammals. Though the rules in these collective animal behaviors seem simple, tornado effects, thunder and lightning,
changes in nutrient supply, or other natural factors may affect them in a certain way. Motivated by this phenomena, we consider
the non-autonomous set-valued dynamical system $(\mathcal{K}(X),\bar{f}_{0,\infty})$, where $\mathcal{K}(X)$ is the hyperspace
of all nonempty compact subsets of $X$, $\bar{f}_{0,\infty}=\{\bar{f}_n\}_{n=0}^{\infty}$ and $\bar{f}_n$ is defined by
$\bar{f}_n(A)=f_n(A)$ for any $A\in\mathcal{K}(X)$. The Hausdorff metric on $\mathcal{K}(X)$ is defined by
\[D_X(A, B)=\max\left\{\sup_{a\in A}d(a, B), \sup_{b\in B}d(b, A)\right\},\;A,B\in\mathcal{K}(X),\]
where $d(a,B)=\inf_{b\in B}{d(a, b)}$.
It is known that $(\mathcal{K}(X),D_X)$ is a compact metric space if and only if $(X, d)$ is a compact
metric space \cite{Nadler}. The natural topology induced by $D_X$ is the Vietoris topology, which is equivalently
generated by the basis consisting of all sets of the form $\langle U_1,\cdots,U_k\rangle$,
where $U_1,\cdots,U_k$ are nonempty open subsets of $X$ and
\[\langle U_1,\cdots,U_k\rangle=\{B\in\mathcal{K}(X): B\subset\cup_{i=1}^{k}U_i, \;B\cap U_{i}\neq\emptyset,\;1\leq i\leq k\}.\]

Does individual chaos imply collective chaos, and conversely? This is a fundamental question.
Bauer and Sigmund \cite{Bauer} studied systematically what chaotic properties of $(X, f)$ can be
carried over to $(\mathcal{K}(X),\bar{f})$ and vice vera. In particular, they proved that positive
topological entropy of $(X,f)$ implies infinite topological entropy of $(\mathcal{K}(X),\bar{f})$.
They also studied the other induced system $(M(X),\hat f)$ on the space $M(X)$
consisting of all Borel probability measures with the Prohorov metric and $\hat f$ is the induced map
on $M(X)$. The readers are referred to \cite{Bauer,Bernardes,Glasner,Li13,Li17,Liu,ShaoJDDE} and references therein
for the dynamics of $(M(X),\hat f)$. It is worthy to mention that Glasner and Weiss \cite{Glasner} proved that
$(X, f)$ has zero topological entropy if and only if $(M(X), \hat f)$ has zero topological entropy,
which demonstrates a big difference of dynamics between $(\mathcal{K}(X),\bar{f})$ and $(M(X),\hat f)$.

Following Bauer and Sigmund's work, there are many studies on the interactions of topological dynamics between $(X, f)$
and $(\mathcal{K}(X),\bar{f})$ (see \cite{Akin,Bernardes-Vermersch2015,FG15,FG16,Guirao,Lamparta,Roman03,Sharman10,Wu15}
and references therein). For example, Lampart and Raith proved that the topological entropy of $\bar{f}$ is zero or infinity
for a homeomorphism $f$ on the compact interval (resp. unit circle) \cite{Lamparta}.
Fern\'andez et al. obtained that $(X, f)$ is chain mixing if and only if $(X, f)$ is chain weakly mixing
if and only if $(\mathcal{K}(X),\bar{f})$ is chain transitive \cite{FG15}, they also get that $(X, f)$
has shadowing  if and only if $(\mathcal{K}(X),\bar{f})$ has shadowing  \cite{FG16}.
Wu et al. proved that $(X, f)$ is $\mathscr{F}$-sensitive if and only if $(\mathcal{K}(X),\bar{f})$
is so when $\mathscr{F}$ is a filter \cite{Wu15}. For NDSs, R. Vasisht and R. Das
obtained syndetic sensitivity (resp. multi-sensitivity and shadowing) between $(X,f_{0,\infty})$ and
$(\mathcal{K}(X),\bar{f}_{0,\infty})$ are equivalent \cite{Vasisht}. We proved that $(X,f_{0,\infty})$
is topological mixing (resp. mild mixing, topological exact, specification property and property $P$)
if and only if $(\mathcal{K}(X),\bar{f}_{0,\infty})$ is so \cite{Shao19}.

Nevertheless, when dealing with non-deterministic issues, such as demographic fuzziness, environmental
fuzziness and life expectancy, the fuzzy systems can be considered \cite{Barros00}.
Let $\mathcal{F}(X)$ be the family of all upper semi-continuous fuzzy sets $u: X\to I=[0,1]$,
$\tilde{f}_{0,\infty}=\{\tilde{f}_n\}_{n=0}^{\infty}$ and $\tilde{f}_n: \mathcal{F}(X)\to\mathcal{F}(X)$
be the Zadeh's extension of $f_n$ to $\mathcal{F}(X)$ defined by
\begin{equation*}
		\tilde{f}_n(u)(x)=\left\{\begin{array}{ll}
		\sup_{z\in f_{n}^{-1}(x)}u(z),& f_{n}^{-1}(x)\neq\emptyset,\\
        0, & f_{n}^{-1}(x)=\emptyset.\\
		\end{array}\right.
\end{equation*}
Then $(\mathcal{F}(X),\tilde{f}_{0,\infty})$ is the induced fuzzified system of $(X,f_{0,\infty})$.
For any $u\in\mathcal{F}(X)$, denote
\[[u]_{\alpha}=\{x\in X: u(x)\geq\alpha\},\;\forall\;\alpha\in(0,1],\]
and the support of $u$ is
\[[u]_{0}=\overline{\{x\in X: u(x)>0\}}.\]
Let $\mathcal{F}^{1}(X)$ denote the system of all normal fuzzy sets on $X$; that is,
\[
\mathcal{F}^{1}(X)=\{u\in\mathcal{F}(X): [u]_\alpha\in\mathcal{K}(X), \;\forall \;\alpha\in I\}.
\]
Then $(\mathcal{F}^{1}(X),\tilde{f}_{0,\infty})$ is the induced normal fuzzified system of $(X,f_{0,\infty})$.
The levelwise metric $d_{\infty}$ on $\mathcal{F}(X)$ is defined by
\begin{equation*}
d_{\infty}(u,v)=\sup_{\alpha\in(0,1]}D_X([u]_{\alpha},[v]_{\alpha}),\;u,v\in\mathcal{F}(X).
\end{equation*}
Note that $(\mathcal{F}(X),d_{\infty})$ and $(\mathcal{F}^{1}(X),d_{\infty})$ are complete
but not compact metric spaces \cite{Kupka11}.

Many authors investigated what dynamics of $(X,f)$ can be inherited by $(\mathcal{F}(X),\tilde{f})$
and conversely how the dynamical behaviors of $(\mathcal{F}(X),\tilde{f})$ affect those of $(X,f)$
(see \cite{Canovas,Kupka1,Kupka11,Roman08,Wu18} and references therein).
It is also natural to consider the interrelations among the dynamics of $(X,f_{0,\infty})$ and
its two induced systems $(\mathcal{K}(X),\bar{f}_{0,\infty})$ and $(\mathcal{F}(X),\tilde{f}_{0,\infty})$.
Wu proved that topological mixing (resp. weak mixing) among $(X,f)$, $(\mathcal{K}(X),\bar{f})$
and $(\mathcal{F}^{1}(X),\tilde{f})$ are equivalent \cite{Wu17}, and $(X,f)$ is chain mixing
if and only if $(\mathcal{K}(X),\bar{f})$ is chain transitive if and only if $(\mathcal{F}^{1}(X),\tilde{f})$ is
chain transitive \cite{Wu18}. Recently, we obtained the equivalence of topological mixing (resp., mild mixing,
cofinite sensitivity and syndetic sensitivity) among $(X,f_{0,\infty})$,
$(\mathcal{K}(X),\bar{f}_{0,\infty})$ and $(\mathcal{F}^{1}(X),\tilde{f}_{0,\infty})$ \cite{Shao21}.
In this paper, we present some new results on this topic as follows.

(1) Positive topological entropy of $(X, f_{0,\infty})$ implies infinite topological entropy of
$(\mathcal{F}(X),\tilde{f}_{0,\infty})$ and $(\mathcal{K}(X),\bar{f}_{0,\infty})$, respectively,
if $f_{0,\infty}$ is equi-continuous, see Theorem \ref{a}.

(2) Zero topological entropy of $(S^1, f_{0,\infty})$ implies zero topological entropy of some
invariant subsystems of $(\mathcal{F}(X),\tilde{f}_{0,\infty})$ and $(\mathcal{K}(X),\bar{f}_{0,\infty})$,
respectively, if $f_{0,\infty}$ is a sequence of equi-continuous and orientation preserving maps on the unit circle $S^{1}$,
see Theorem \ref{s}.

(3) Both $(\mathcal{K}(I),\bar{f})$ and $(\mathcal{F}(I),\tilde{f})$ have infinite topological entropy
for any transitive interval map $f$. In contrast, there exists a transitive non-autonomous
dynamical system $(I, f_{0,\infty})$ such that both $(\mathcal{K}(I),\bar{f}_{0,\infty})$ and $(\mathcal{F}(I),\tilde{f}_{0,\infty})$
have zero topological entropy, see Theorem \ref{transitivity-entropy}.

(4) Chain weakly mixing of order $n$: $(\mathcal{F}^{1}(X),\tilde{f}_{0,\infty})\Rightarrow
(\mathcal{K}(X),\bar{f}_{0,\infty})\Rightarrow(X,f_{0,\infty})$, see Theorem \ref{ctransitive};

chain weakly mixing of all orders: $(\mathcal{K}(X),\bar{f}_{0,\infty})\Leftrightarrow(\mathcal{F}^{1}(X),\tilde{f}_{0,\infty})$,
if $f_{0,\infty}$ is equi-continuous, see Theorem \ref{ctransitive2}.

(5) Chain mixing among $(X, f_{0,\infty})$, $(\mathcal{K}(X),\bar{f}_{0,\infty})$ and $(\mathcal{F}^{1}(X),$ $\tilde{f}_{0,\infty})$
are equivalent, if $f_{0,\infty}$ is equi-continuous, see Theorem \ref{cmixing}.

(6) $(X, f_{0,\infty})$ has shadowing  if and only if $(\mathcal{K}(X),\bar{f}_{0,\infty})$ has shadowing
if and only if $(\mathcal{F}^{1}(X),\tilde{f}_{0,\infty})$ has finite shadowing, see Theorem \ref{f1}.

(7) $h$-shadowing  among $(X,f_{0,\infty})$, $(\mathcal{K}(X),f_{0,\infty})$ and $(\mathcal{F}^{1}(X),\tilde{f}_{0,\infty})$
are equivalent, see Theorem \ref{hshadowing}.

(8) Multi-$\mathscr{F}$-sensitivity among $(X,f_{0,\infty})$, $(\mathcal{K}(X),f_{0,\infty})$ and
$(\mathcal{F}^{1}(X),\tilde{f}_{0,\infty})$ are equivalent, where $\mathscr{F}$ is a Furstenberg family, see Theorem \ref{sensitive}.

The rest of the paper is organized as follows. Sections 2--5 investigate the connections of topological
dynamics among $(X,f_{0,\infty})$, $(\mathcal{K}(X),\bar{f}_{0,\infty})$ and $(\mathcal{F}(X),\tilde{f}_{0,\infty})$.
Section 2 focuses on topological entropy, especially on the positive entropy and zero entropy.
In Section 3, various chain properties (chain transitivity, chain weak mixing and chain mixing) are studied.
Shadowing properties and $\mathscr{F}$-sensitivity are discussed in Sections 4 and 5, respectively.

\section{Topological entropy}

Let $\mathbf{N}$ and $\mathbf{Z^{+}}$ denote the set of all nonnegative integers and all positive
integers, respectively, and let $B_{d}(x,\varepsilon)$ and $\bar{B}_{d}(x,\varepsilon)$ denote the
open and closed balls of radius $\varepsilon>0$ centered at $x\in X$, respectively.

Recall the definitions of topological entropy of $(X,f_{0,\infty})$ introduced in \cite{Kolyada96}.
For open covers $\mathcal{A}_{1},\cdots,\mathcal{A}_{n},\; \mathcal{A}$ of $X$, denote
$\bigvee_{i=1}^{n}\mathcal{A}_{i}:=\{\bigcap_{i=1}^{n}A_{i}: A_{i}\in\mathcal{A}_{i},\  1\leq i\leq n\}$,
$f_{i}^{-n}(\mathcal{A}):=\{f_{i}^{-n}(A): A\in\mathcal{A}\}$ and $\mathcal{A}_{i}^{n}:=\bigvee_{j=0}^{n-1}f_{i}^{-j}(\mathcal{A})$, where
$f_{i}^{n}=f_{i+n-1}\circ\cdots\circ f_{i+1}\circ f_{i}$ and $f_{i}^{-n}=f_{i}^{-1}\circ f_{i+1}^{-1}\circ\cdots\circ f_{i+n-1}^{-1}$. Let $\mathcal{N}(\mathcal{A})$ be the minimal possible cardinality of all subcovers chosen from $\mathcal{A}$.
Then the topological entropy of $(X,f_{0,\infty})$ on the cover $\mathcal{A}$ is defined by
\[h(f_{0, \infty},\mathcal{A}):=\limsup_{n\rightarrow\infty}\log\mathcal{N}(\mathcal{A}_{0}^{n})/n,\]
and the topological entropy of $(X,f_{0,\infty})$ is defined by
\[h(f_{0, \infty}):=\sup\{h(f_{0, \infty},\mathcal{A}): \mathcal{A}\; {\rm is\; an\; open\; cover\; of\; X\;}\}.\]

Another equivalent definition is based on separated sets and spanning sets.
Let $n\in\mathbf{Z^{+}}$ and $\epsilon>0$. A subset $E\subset X$ is called $(n,\epsilon)$-separated if,
for any $x\neq y\in E$, there exists $0\leq k\leq n-1$ such that $d(f_{0}^{k}(x),f_{0}^{k}(y))>\epsilon$.
A subset $F\subset X$ is called $(n,\epsilon)$-spans another set $K\subset X$ if, for any $y\in K$,
there exists $x\in F$ such that $d(f_{0}^{k}(x),f_{0}^{k}(y))\leq\epsilon$ for any $0\leq k\leq n-1$.
Let $\Lambda\subset X$ and $s_{n}(\epsilon,\Lambda,f_{0,\infty})$ be the maximal cardinality of an
$(n,\epsilon)$-separated set in $\Lambda$ and $r_{n}(\epsilon,\Lambda,f_{0,\infty})$ be the minimal
cardinality of an $(n,\epsilon)$-spanning set in $\Lambda$. Denote
\[
h(f_{0,\infty},\Lambda):=\lim_{\epsilon\to0}\limsup_{n\to\infty}\frac{1}{n}\log s_{n}(\epsilon,\Lambda,f_{0,\infty})
=\lim_{\epsilon\to0}\limsup_{n\to\infty}\frac{1}{n}\log r_{n}(\epsilon,\Lambda,f_{0,\infty}).
\]
If $\Lambda=X$, then $h(f_{0,\infty}):=h(f_{0,\infty},X)$ is called the topological entropy
of $(X,f_{0,\infty})$. Clearly, $h(f_{0,\infty},\Lambda)\leq h(f_{0,\infty})$ for any nonempty subset
$\Lambda\subset X$.

\subsection{Some basic properties of topological entropy in NDSs}

Let $g_{0,\infty}=\{g_n\}_{n=0}^{\infty}$ be a sequences of continuous self-maps on a compact metric space $(Y,\rho)$.
$(X\times Y,f_{0,\infty}\times g_{0,\infty})$ is the product of $(X,f_{0,\infty})$ and $(Y,g_{0,\infty})$, where
$f_{0,\infty}\times g_{0,\infty}=\{f_n\times g_n\}_{n=0}^{\infty}$ and the metric $d\times\rho$ on $X\times Y$ is defined
by $d\times\rho((x_1,y_1),(x_2,y_2))=\max\{d(x_1,x_2),\rho(y_1,y_2)\}$ for any $(x_1,y_1),(x_2,y_2)\in X\times Y$.
Then we have the following result.

\begin{proposition}
$$h(f_{0,\infty}\times g_{0,\infty})\leq h(f_{0,\infty})+h(g_{0,\infty}).$$
\end{proposition}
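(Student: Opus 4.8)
The plan is to work throughout with the spanning-set formulation of the entropy, because spanning sets of the two factors combine multiplicatively into a spanning set of the product, and this is precisely what produces an \emph{upper} bound. (Separated sets would instead multiply into a separated set of the product and hence give a lower bound, which is useless here.) The whole argument rests on the fact that the metric $d\times\rho$ on $X\times Y$ is the \emph{maximum} of $d$ and $\rho$.

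First I would record the product structure of the orbit maps. Writing $(f\times g)_0^k$ for the $k$-th orbit map of $f_{0,\infty}\times g_{0,\infty}$, the identity $(a\times b)\circ(c\times d)=(a\circ c)\times(b\circ d)$ gives $(f\times g)_0^k=f_0^k\times g_0^k$ for every $k$. Consequently, for $(x,y),(x',y')\in X\times Y$,
\[
d\times\rho\big((f\times g)_0^k(x,y),\,(f\times g)_0^k(x',y')\big)=\max\{d(f_0^k(x),f_0^k(x')),\,\rho(g_0^k(y),g_0^k(y'))\},
\]
so the $(n,\epsilon)$-spanning condition for the product holds for $(x,y)$ against $(x',y')$ exactly when it holds simultaneously for $x$ against $x'$ in $(X,f_{0,\infty})$ and for $y$ against $y'$ in $(Y,g_{0,\infty})$. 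Now fix $n\in\mathbf{Z^{+}}$ and $\epsilon>0$ and let $F$ be a minimal $(n,\epsilon)$-spanning set of $X$ and $G$ a minimal $(n,\epsilon)$-spanning set of $Y$. Given $(y_1,y_2)\in X\times Y$, pick $x\in F$ that $(n,\epsilon)$-spans $y_1$ and $y\in G$ that $(n,\epsilon)$-spans $y_2$; by the displayed identity $(x,y)\in F\times G$ then $(n,\epsilon)$-spans $(y_1,y_2)$. Hence $F\times G$ is $(n,\epsilon)$-spanning for the product and
\[
r_n(\epsilon,X\times Y,f_{0,\infty}\times g_{0,\infty})\leq r_n(\epsilon,X,f_{0,\infty})\cdot r_n(\epsilon,Y,g_{0,\infty}).
\]

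To finish, I would take logarithms, divide by $n$, and pass to the upper limit, using the elementary inequality $\limsup_n(a_n+b_n)\leq\limsup_n a_n+\limsup_n b_n$ to obtain
\[
\limsup_{n\to\infty}\tfrac1n\log r_n(\epsilon,X\times Y)\leq\limsup_{n\to\infty}\tfrac1n\log r_n(\epsilon,X)+\limsup_{n\to\infty}\tfrac1n\log r_n(\epsilon,Y).
\]
Then I let $\epsilon\to0$: each of the three terms is monotone in $\epsilon$ (the spanning numbers grow as $\epsilon$ shrinks), so all three limits exist in $[0,\infty]$ and the limit of the right-hand sum splits as the sum of the limits, giving $h(f_{0,\infty}\times g_{0,\infty})\leq h(f_{0,\infty})+h(g_{0,\infty})$.

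There is no serious obstacle here; the proof is a direct transcription of Bowen's product estimate to the non-autonomous setting. The only points needing care are the multiplicativity of the orbit maps, where the max form of $d\times\rho$ is essential, and the two limit manipulations (subadditivity of $\limsup$ and the exchange of $\lim_{\epsilon\to0}$ with a finite sum), both justified by monotonicity. It is worth flagging that the reverse inequality cannot be expected for non-autonomous systems, precisely because $\limsup$ is not additive, so only the stated bound holds in general.
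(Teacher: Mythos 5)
Your proof is correct and follows essentially the same route as the paper's: both use the spanning-set formulation, show that the product $F\times G$ of $(n,\epsilon)$-spanning sets is $(n,\epsilon)$-spanning for $(X\times Y, f_{0,\infty}\times g_{0,\infty})$ thanks to the max metric, deduce $r_n(\epsilon,X\times Y,f_{0,\infty}\times g_{0,\infty})\leq r_n(\epsilon,X,f_{0,\infty})\,r_n(\epsilon,Y,g_{0,\infty})$, and conclude via subadditivity of $\limsup$ and the limit $\epsilon\to 0$. Your additional remarks on the multiplicativity of the orbit maps and the monotonicity in $\epsilon$ make explicit steps the paper leaves implicit, but introduce nothing structurally different.
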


\begin{proof}
Let $n\in\mathbf{Z^{+}}$ and $\epsilon>0$. Suppose that $F_1$ is an $(n,\epsilon)$-spanning set for $X$ with respect to
$f_{0,\infty}$ with $|F_1|=r_n(\epsilon,X,f_{0,\infty})$ and $F_2$ is an $(n,\epsilon)$-spanning set for $Y$ with respect
to $g_{0,\infty}$ with $|F_2|=r_n(\epsilon,Y,g_{0,\infty})$, where $|F_i|$ is the cardinality of the set $F_i$, $i=1,2$.
Then $F_1\times F_2$ is an $(n,\epsilon)$-spanning set for $X\times Y$ with respect to $f_{0,\infty}\times g_{0,\infty}$.
Thus,
\[r_n(\epsilon,X\times Y,f_{0,\infty}\times g_{0,\infty})\leq r_n(\epsilon,X,f_{0,\infty})r_n(\epsilon,Y,g_{0,\infty}).\]
Hence,
\begin{equation*}
\begin{split}
h(f_{0,\infty}\times g_{0,\infty})
&=\lim_{\epsilon\to0}\limsup_{n\to\infty}\frac{1}{n}\log r_{n}(\epsilon,X\times Y,f_{0,\infty}\times g_{0,\infty})\\
&\leq\lim_{\epsilon\to0}\limsup_{n\to\infty}\frac{1}{n}\log r_{n}(\epsilon,X,f_{0,\infty})
+\lim_{\epsilon\to0}\limsup_{n\to\infty}\frac{1}{n}\log r_{n}(\epsilon,Y,g_{0,\infty})\\
&=h(f_{0,\infty})+h(g_{0,\infty}).
\end{split}
\end{equation*}
\end{proof}

If $X=Y$ and $f_{0,\infty}=g_{0,\infty}$, then one gets the following better result.

\begin{lemma}[Proposition 3.1, \cite{Liu}]\label{l}
$h(f_{0,\infty}^{(k)})=kh(f_{0,\infty})$ for any $k\in\mathbf{Z^{+}}$,
where $X^k=\underbrace{X\times\cdots\times X}_{k\;\;times}$, $f_{0,\infty}^{(k)}=\{f_{n}^{(k)}\}_{n=0}^{\infty}$ and $f_{n}^{(k)}=\underbrace{f_n\times\cdots\times f_n}_{k \;\;times}$ is a continuous self-map on $X^k$.
\end{lemma}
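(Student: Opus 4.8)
The plan is to establish the two inequalities $h(f_{0,\infty}^{(k)})\le kh(f_{0,\infty})$ and $h(f_{0,\infty}^{(k)})\ge kh(f_{0,\infty})$ separately, working throughout with the separated/spanning-set formulation of entropy and the max product metric $d_k$ on $X^k$.

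For the upper bound I would argue by induction on $k$ via the preceding Proposition. Since the maximum of maxima is again a maximum, the max metric on $X^k=X\times X^{k-1}$ agrees with the product of the max metrics on $X$ and $X^{k-1}$, and $f_{0,\infty}^{(k)}=f_{0,\infty}\times f_{0,\infty}^{(k-1)}$. Hence the subadditivity $h(f_{0,\infty}\times g_{0,\infty})\le h(f_{0,\infty})+h(g_{0,\infty})$ yields $h(f_{0,\infty}^{(k)})\le h(f_{0,\infty})+h(f_{0,\infty}^{(k-1)})$, and the induction hypothesis closes the estimate.

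For the lower bound---the more substantive direction---I would exploit the coordinatewise structure of the product. Write $g_{0,\infty}=f_{0,\infty}^{(k)}$; from $f_n^{(k)}=f_n\times\cdots\times f_n$ and the definition of the iterates in an NDS, the orbit map factors as $g_0^j=f_0^j\times\cdots\times f_0^j$ ($k$ copies). The key observation is then that, under $d_k$, two points $\mathbf{x},\mathbf{y}\in X^k$ satisfy $d_k(g_0^j(\mathbf{x}),g_0^j(\mathbf{y}))>\epsilon$ exactly when $d(f_0^j(x_i),f_0^j(y_i))>\epsilon$ for some coordinate $i$. Consequently, if $E\subset X$ is an $(n,\epsilon)$-separated set with $|E|=s_n(\epsilon,X,f_{0,\infty})$, then $E^k\subset X^k$ is $(n,\epsilon)$-separated for $g_{0,\infty}$: any two distinct points of $E^k$ differ in some coordinate $i$, and the distinct entries $x_i\ne y_i$ are separated at some time $0\le j\le n-1$, which separates the product points at the same time. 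This gives $s_n(\epsilon,X^k,f_{0,\infty}^{(k)})\ge s_n(\epsilon,X,f_{0,\infty})^k$; applying $\frac1n\log(\cdot)$, then $\limsup_{n\to\infty}$ and $\lim_{\epsilon\to0}$, produces $h(f_{0,\infty}^{(k)})\ge kh(f_{0,\infty})$.

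Combining the two bounds gives the asserted equality. I do not expect a genuine obstacle here; the only points needing care are the coordinatewise factorization $g_0^j=f_0^j\times\cdots\times f_0^j$, which is immediate from the definitions, and the verification that the max metric behaves consistently under the decomposition $X^k=X\times X^{k-1}$ used in the inductive upper bound.
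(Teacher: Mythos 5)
Your argument is correct, but there is nothing in the paper to compare it against: the statement is imported verbatim as Lemma 2.2 with an explicit attribution to Proposition~3.1 of \cite{Liu}, so the paper offers no proof of its own. Your two-sided argument is the standard one (and essentially the argument in the cited reference). The lower bound is the substantive half, and you handle it correctly: under the max metric $d_k$, a point of $E^k$ is separated from another at time $j$ as soon as one coordinate pair is, so $s_n(\epsilon,X^k,f_{0,\infty}^{(k)})\ge s_n(\epsilon,X,f_{0,\infty})^k$, and since this holds for every $n$ the inequality survives $\limsup_{n\to\infty}$ and $\lim_{\epsilon\to 0}$; you also correctly chose separated sets here, as spanning sets only produce upper bounds. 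For the upper bound, your induction via the paper's Proposition~2.1 is valid (the max metric on $X^k$ does agree with the max of the metrics on $X$ and $X^{k-1}$), though you could bypass induction and the decomposition entirely by noting directly that $F^k$ is $(n,\epsilon)$-spanning for $X^k$ whenever $F$ is for $X$, giving $r_n(\epsilon,X^k,f_{0,\infty}^{(k)})\le r_n(\epsilon,X,f_{0,\infty})^k$ in one step; this also sidesteps any appeal to subadditivity of $\limsup$, which is harmless in your version but unnecessary. No equicontinuity or other hypothesis on $f_{0,\infty}$ is needed, consistent with the unconditional statement of the lemma.
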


$(X,f_{0,\infty})$ is said to be topologically $\pi$-equi-semiconjugate to $(Y,g_{0,\infty})$ if there exists a
continuous and surjective map $\pi$ from $X$ to $Y$ such that $\pi\circ f_n=g_n\circ \pi$ for all $n\geq0$, and $\pi$
is called a topological equi-semiconjuacy between $(X,f_{0,\infty})$ and $(Y,g_{0,\infty})$. If there exists $c>0$
such that $\sup_{y\in Y}|\pi^{-1}(y)|\leq c$, then $\pi$ is called finite-to-one.

By Theorems B and C in \cite{Kolyada96}, we have the following result.

\begin{proposition}\label{e}
Let $f_{0,\infty}$ and $g_{0,\infty}$ be equi-continuous on $X$ and $Y$, respectively.
If there exists a finite-to-one map $\pi: X\to Y$ such that $(X,f_{0,\infty})$ is
topologically $\pi$-equi-semiconjugate to $(Y,g_{0,\infty})$, then $h(f_{0,\infty})=h(g_{0,\infty})$.
\end{proposition}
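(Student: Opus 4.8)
The plan is to prove the two inequalities $h(g_{0,\infty})\le h(f_{0,\infty})$ and $h(f_{0,\infty})\le h(g_{0,\infty})$ separately, working from the spanning/separated-set definition of entropy and using that the intertwining relation $\pi\circ f_n=g_n\circ\pi$ iterates to $\pi\circ f_0^{k}=g_0^{k}\circ\pi$ for every $k$ (induction on $k$). The first inequality is the routine factor estimate and needs only that $\pi$ is uniformly continuous (being continuous on the compact space $X$): given $\delta>0$, fix $\epsilon>0$ with $d(x,x')\le\epsilon\Rightarrow\rho(\pi(x),\pi(x'))\le\delta$. If $F$ is an $(n,\epsilon)$-spanning set for $X$ of size $r_n(\epsilon,X,f_{0,\infty})$, then for $y\in Y$ I pick (by surjectivity) some $x\in\pi^{-1}(y)$ and $x'\in F$ that $(n,\epsilon)$-tracks $x$; the intertwining together with uniform continuity gives $\rho(g_0^{k}(y),g_0^{k}(\pi(x')))\le\delta$ for all $0\le k\le n-1$, so $\pi(F)$ is $(n,\delta)$-spanning for $Y$. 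Hence $r_n(\delta,Y,g_{0,\infty})\le r_n(\epsilon,X,f_{0,\infty})$, and letting $\delta\to0$ yields $h(g_{0,\infty})\le h(f_{0,\infty})$. Equi-continuity is not used here.

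For the reverse inequality I would run a Bowen-type fiber estimate. Write $d_n(x,x')=\max_{0\le k<n}d(f_0^{k}(x),f_0^{k}(x'))$ for the dynamical metric on $X$, and let an $(n,\epsilon)$-separated set $E\subset X$ be given. Mapping each $x\in E$ to an element $\phi(x)$ of an $(n,\delta)$-spanning set $G\subset Y$ whose orbit $\delta$-tracks the $\pi$-image orbit of $x$, it suffices to bound each $|\phi^{-1}(y')|$ by a constant independent of $n$ and of $E$, for then $s_n(\epsilon,X,f_{0,\infty})\le \bigl(\max_{y'}|\phi^{-1}(y')|\bigr)\, r_n(\delta,Y,g_{0,\infty})$. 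Now $\phi^{-1}(y')$ consists of $(n,\epsilon)$-separated points lying in the preimage of the $\delta$-tube around the orbit of $y'$. The crucial observation is that, because $\pi$ is finite-to-one with $|\pi^{-1}(\cdot)|\le c$, a compactness argument shows that for a suitably small tube width $\delta$ this preimage is covered by at most $c$ many $d_n$-balls of radius $\epsilon$: any point tracking the orbit of $y'$ arbitrarily closely would, in the limit $\delta\to0$, have its whole $n$-orbit sitting over the orbit of $y'$, hence land in the fiber $\pi^{-1}(y')$ of $\le c$ points and thus in one of $\le c$ prescribed $d_n$-balls. This caps $|\phi^{-1}(y')|$ by a constant depending only on $c$, and the correction factor drops out after taking $\tfrac1n\log$, $\limsup_n$, and $\epsilon,\delta\to0$, giving $h(f_{0,\infty})\le h(g_{0,\infty})$.

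The main obstacle is that in the compactness step the tube width $\delta$ a priori depends on the time horizon $n$, and if $\delta=\delta(n)\to0$ too fast then $r_n(\delta(n),Y,g_{0,\infty})$ need not be controlled by $h(g_{0,\infty})$. In the autonomous setting $f_0^{k}=f^{k}$ are iterates of one map and Bowen's interchange of limits handles this; in a non-autonomous system the maps change with $k$, so a uniform modulus of continuity for the compositions is required to choose $\delta$ independently of $n$. This is exactly what equi-continuity of $\{f_n\}$ supplies, and it is the hypothesis that makes the fiber correction uniform in time. This uniform control is the content isolated by Kolyada and Snoha: Theorem B of \cite{Kolyada96} gives $h(g_{0,\infty})\le h(f_{0,\infty})$ for an equi-semiconjugacy, and Theorem C gives the reverse inequality under the finite-to-one hypothesis, so combining the two yields $h(f_{0,\infty})=h(g_{0,\infty})$.
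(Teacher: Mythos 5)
Your proposal is correct and ultimately takes the same route as the paper: the paper's entire proof of this proposition is the citation of Theorems B and C of \cite{Kolyada96}, which is exactly how you close the argument, with Theorem B supplying $h(g_{0,\infty})\le h(f_{0,\infty})$ and Theorem C the reverse inequality under the finite-to-one and equi-continuity hypotheses. Your added material---the direct spanning-set proof of the factor inequality and the honest identification of why the fiber estimate needs equi-continuity to choose the tube width independently of the time horizon $n$---is a sound elaboration of what those cited theorems contain, not a divergence from the paper's argument.
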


Let $\alpha$ be an open cover of $X$. A Lebesgue number of $\alpha$ is some $\delta>0$ satisfying that
for any $E\subset X$, $d(E)<\delta$ implies that $E\subset A$ for some $A\in\alpha$.
The diameter of $\alpha$ is $d(\alpha)=\sup_{A\in\alpha}d(A)$, where $d(A)=\sup_{x,y\in A} d(x,y)$.

\begin{proposition}\label{0}
Let $\{\alpha_n\}_{n=0}^{\infty}$ be a sequence of open covers of $X$ with $d(\alpha_n)\to0$ as $n\to\infty$.
Then $\lim_{n\to\infty}h(f_{0,\infty},\alpha_n)=h(f_{0,\infty})$.
\end{proposition}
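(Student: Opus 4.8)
The plan is to prove the limit statement by establishing two inequalities that sandwich $h(f_{0,\infty})$. By definition $h(f_{0,\infty})=\sup_{\alpha}h(f_{0,\infty},\alpha)$ over all open covers $\alpha$, so one direction is immediate: $h(f_{0,\infty},\alpha_n)\le h(f_{0,\infty})$ for every $n$, whence $\limsup_{n\to\infty}h(f_{0,\infty},\alpha_n)\le h(f_{0,\infty})$. The substantive direction is the reverse inequality, namely that $\liminf_{n\to\infty}h(f_{0,\infty},\alpha_n)\ge h(f_{0,\infty})$, which together with the first will also force the $\liminf$ and $\limsup$ to coincide and yield a genuine limit.

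For the reverse direction I would fix an arbitrary open cover $\beta$ of $X$ and show that $h(f_{0,\infty},\alpha_n)\ge h(f_{0,\infty},\beta)$ for all sufficiently large $n$. The key mechanism is the Lebesgue number: let $\delta>0$ be a Lebesgue number of $\beta$. Since $d(\alpha_n)\to 0$, choose $N$ so that $d(\alpha_n)<\delta$ for all $n\ge N$. Then every element of $\alpha_n$ has diameter less than $\delta$, so by the defining property of the Lebesgue number each $A\in\alpha_n$ is contained in some element of $\beta$; that is, $\alpha_n$ refines $\beta$ (written $\beta\prec\alpha_n$) for all $n\ge N$. The next step is to push this refinement through the entropy machinery. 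One checks the standard monotonicity fact that if $\beta\prec\alpha$ then $\mathcal{N}(\beta_0^k)\le\mathcal{N}((\alpha_n)_0^k)$ for each $k$, using that $\beta\prec\alpha_n$ implies $f_0^{-j}(\beta)\prec f_0^{-j}(\alpha_n)$ and hence $\bigvee_{j=0}^{k-1}f_0^{-j}(\beta)\prec\bigvee_{j=0}^{k-1}f_0^{-j}(\alpha_n)$, so that any subcover of the finer join projects to a subcover of the coarser one of no larger cardinality. Taking $\frac1k\log(\cdot)$ and then $\limsup_{k\to\infty}$ yields $h(f_{0,\infty},\beta)\le h(f_{0,\infty},\alpha_n)$ for all $n\ge N$.

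From this it follows that $\liminf_{n\to\infty}h(f_{0,\infty},\alpha_n)\ge h(f_{0,\infty},\beta)$, and since $\beta$ was an arbitrary open cover, taking the supremum over $\beta$ gives $\liminf_{n\to\infty}h(f_{0,\infty},\alpha_n)\ge h(f_{0,\infty})$. Combining with the easy upper bound $\limsup_{n\to\infty}h(f_{0,\infty},\alpha_n)\le h(f_{0,\infty})$, both one-sided limits equal $h(f_{0,\infty})$, so the limit exists and $\lim_{n\to\infty}h(f_{0,\infty},\alpha_n)=h(f_{0,\infty})$, as desired.

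\textbf{The main obstacle} I anticipate is verifying the monotonicity of $\mathcal{N}(\cdot)$ under refinement in the non-autonomous setting, since the covers $(\alpha_n)_0^k$ involve the inhomogeneous preimages $f_0^{-j}$ rather than iterates of a single map. The refinement relation is, however, preserved under taking preimages under any continuous map and under the join operation $\bigvee$, so the argument is formally identical to the autonomous case once one is careful that the refinement $\beta\prec\alpha_n$ is what propagates (a finer cover produces a larger minimal subcover count). A minor point to handle cleanly is that $h(f_{0,\infty},\alpha_n)$ is defined with $\limsup_{k\to\infty}$ in $k$; the inequality at each fixed $k$ survives the $\limsup$, so no interchange-of-limits difficulty arises.
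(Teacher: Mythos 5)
Your proof is correct and uses essentially the same mechanism as the paper's: take a Lebesgue number $\delta$ of a near-optimal (or arbitrarily good) cover $\beta$, note that $d(\alpha_n)<\delta$ forces $\alpha_n$ to refine $\beta$, and invoke monotonicity of $h(f_{0,\infty},\cdot)$ under refinement, which survives the non-autonomous preimages $f_0^{-j}$ and the join exactly as you check. The only cosmetic difference is that the paper treats the cases $h(f_{0,\infty})<\infty$ and $h(f_{0,\infty})=\infty$ separately with an $\epsilon$- (resp.\ $M$-) argument, whereas your $\liminf$/$\limsup$ formulation handles both at once.
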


\begin{proof}
Suppose that $h(f_{0,\infty})<\infty$. For any $\epsilon>0$, there exists an open cover $\beta$ such that
$h(f_{0,\infty},\beta)>h(f_{0,\infty})-\epsilon$. Let $\delta>0$ be a Lebesgue number of $\beta$.
Since $d(\alpha_n)\to0$ as $n\to\infty$, there exists $N\in\mathbf{Z^{+}}$ such that $d(\alpha_n)<\delta$
for any $n\geq N$. Then $\alpha_n$ is a refinement of $\beta$ for any $n\geq N$. Thus,
\[h(f_{0,\infty})-\epsilon<h(f_{0,\infty},\beta)\leq h(f_{0,\infty},\alpha_n)\leq h(f_{0,\infty}),\;n\geq N.\]
Hence, $\lim_{n\to\infty}h(f_{0,\infty},\alpha_n)=h(f_{0,\infty})$.

Suppose that $h(f_{0,\infty})=\infty$. For any $M>0$, there exists an open cover $\gamma$ such that
$h(f_{0,\infty},\gamma)>M$. Let $\delta>0$ be a Lebesgue number of $\gamma$. By the fact that $d(\alpha_n)\to 0$ as $n\to\infty$,
there exists $N\in\mathbf{Z^{+}}$ such that
\[h(f_{0,\infty},\alpha_n)\geq h(f_{0,\infty},\gamma)>M,\;n\geq N,\]
which implies that $h(f_{0,\infty},\alpha_n)\to\infty$ as $n\to\infty$.
\end{proof}

\begin{Corollary}
$h(f_{0,\infty})=\lim_{\delta\to0}\{\sup h(f_{0,\infty},\alpha): d(\alpha)<\delta\}$.
\end{Corollary}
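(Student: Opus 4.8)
The plan is to deduce this directly from Proposition \ref{0}, treating the corollary as a reformulation of that limit statement in which the single sequence of covers is replaced by the supremum over all covers of small diameter. Write $g(\delta):=\sup\{h(f_{0,\infty},\alpha): d(\alpha)<\delta\}$. First I would observe that $g$ is monotone: if $0<\delta_1\le\delta_2$ then $\{\alpha: d(\alpha)<\delta_1\}\subset\{\alpha: d(\alpha)<\delta_2\}$, so $g(\delta_1)\le g(\delta_2)$. Hence $g(\delta)$ is non-increasing as $\delta\downarrow0$, and the limit $\lim_{\delta\to0}g(\delta)=\inf_{\delta>0}g(\delta)$ exists in $[0,\infty]$. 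It then remains to squeeze this limit against $h(f_{0,\infty})$ from both sides.

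For the upper bound, note that by the very definition of $h(f_{0,\infty})$ as the supremum of $h(f_{0,\infty},\alpha)$ over all open covers $\alpha$, every cover with $d(\alpha)<\delta$ satisfies $h(f_{0,\infty},\alpha)\le h(f_{0,\infty})$. Taking the supremum gives $g(\delta)\le h(f_{0,\infty})$ for every $\delta>0$, and therefore $\lim_{\delta\to0}g(\delta)\le h(f_{0,\infty})$.

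The substantive step is the reverse inequality, where Proposition \ref{0} enters. Using compactness of $X$, for each $n\in\mathbf{Z^{+}}$ I would cover $X$ by finitely many open balls of radius $1/(2n)$ to obtain an open cover $\alpha_n$ with $d(\alpha_n)<1/n$; in particular $d(\alpha_n)\to0$. Proposition \ref{0} then yields $h(f_{0,\infty},\alpha_n)\to h(f_{0,\infty})$. Since $d(\alpha_n)<1/n$, the cover $\alpha_n$ lies in the family defining $g(1/n)$, so $h(f_{0,\infty},\alpha_n)\le g(1/n)$. Letting $n\to\infty$ gives $h(f_{0,\infty})\le\lim_{n\to\infty}g(1/n)=\lim_{\delta\to0}g(\delta)$, the last equality holding because the monotone limit may be evaluated along the subsequence $\delta=1/n$. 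Combining the two inequalities proves the claim.

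There is no serious analytic obstacle here; the corollary is essentially a packaging of Proposition \ref{0}. The only points requiring care are bookkeeping ones: establishing that the one-sided limit exists (handled by the monotonicity argument above), guaranteeing that covers of arbitrarily small diameter actually exist (handled by compactness of $X$), and respecting the strict inequality $d(\alpha)<\delta$ when relating an individual cover $\alpha_n$ to $g$. I would also note that the argument is insensitive to whether $h(f_{0,\infty})$ is finite or infinite: in the infinite case Proposition \ref{0} forces $h(f_{0,\infty},\alpha_n)\to\infty$, whence $g(1/n)\to\infty$ and the identity reads $\infty=\infty$.
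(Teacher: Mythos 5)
Your proof is correct and takes essentially the same route as the paper, which states this corollary without separate proof precisely because it is the squeeze argument you give: $g(\delta)\le h(f_{0,\infty})$ trivially, and Proposition \ref{0} applied to any sequence of covers with $d(\alpha_n)\to0$ supplies the reverse inequality. One microscopic slip: balls of radius $1/(2n)$ only guarantee $d(\alpha_n)\le 1/n$ rather than the strict inequality $d(\alpha_n)<1/n$, but taking radius $1/(3n)$ (or comparing $\alpha_n$ with $g(2/n)$ instead) repairs this immediately.
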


The following proposition is an interesting comparison between non-autonomous and classical (i.e. autonomous) dynamical systems.

\begin{proposition}
{\rm(i)} {\rm(Proposition 2, \cite{Lamparta})} Let $f$ be a continuous map on a compact metric space $(X,d)$. Assume that for any $x\in X$,
there exists a fixed point $y$ of $f$ such that $\lim_{n\to\infty}f^{n}(x)=y$. Then $h(f)=0$.

{\rm(ii)} {\rm(Theorem 4, \cite{BO12})} There exists a non-autonomous discrete systems $([0,1], f_{0,\infty})$ such that $0, 1$ are fixed points and all others are asymptotic to $0$, but $h(f_{0,\infty})\geq\log 2$.
\end{proposition}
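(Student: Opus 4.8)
\emph{Proof plan.} Part (i) is an autonomous statement, so the plan is to attack it through the variational principle, $h(f)=\sup_{\mu}h_{\mu}(f)$, the supremum being taken over the $f$-invariant Borel probability measures $\mu$ on $X$ (which exist by Krylov--Bogolyubov, since $X$ is compact and $f$ continuous). The hypothesis says precisely that every orbit has a one-point $\omega$-limit set, namely $\omega(x)=\{y(x)\}$ with $y(x)$ fixed. Fix any invariant $\mu$. By the Poincar\'e recurrence theorem, applied to a countable basis of $X$, $\mu$-almost every $x$ is recurrent, i.e. $x\in\omega(x)$; combined with $\omega(x)=\{y(x)\}$ this forces $x=y(x)$, so $x$ is a fixed point. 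Hence $\mu(\mathrm{Fix}(f))=1$, i.e. $f=\mathrm{id}$ $\mu$-almost everywhere. For any finite partition $\xi$ this gives $f^{-i}\xi=\xi$ mod $\mu$, so $H_{\mu}(\bigvee_{i=0}^{n-1}f^{-i}\xi)=H_{\mu}(\xi)$ and $h_{\mu}(f,\xi)=0$; thus $h_{\mu}(f)=0$ for every $\mu$, and $h(f)=0$.

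For part (ii) the point is that the rigidity of (i) must break once the maps vary with $n$. Since $h(f_{0,\infty})=\lim_{\epsilon\to0}\limsup_{n}\frac1n\log s_{n}(\epsilon,I,f_{0,\infty})$ and $s_{n}(\epsilon,I,f_{0,\infty})$ is non-increasing in $\epsilon$, it suffices to fix one scale $\epsilon_{0}>0$ and produce, for infinitely many $n$, an $(n,\epsilon_{0})$-separated set of cardinality at least $2^{n(1-o(1))}$, while arranging that $0,1$ are fixed by every $f_{n}$ and $f_{0}^{n}(x)\to0$ for every $x\in(0,1)$.

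Before constructing anything I would record the structural constraint that dictates the design, which is also the main obstacle. If the convergence $f_0^n\to 0$ were uniform, then for large $n$ the whole orbit would already lie within $\epsilon_0/2$ of $0$, so any $\epsilon_0$-separation would have to be produced by the first boundedly many maps; these multiply the number of distinguishable points only by a bounded factor per step, forcing $s_n(\epsilon_0,I,f_{0,\infty})$ to stay bounded and hence $h(f_{0,\infty})=0$. Therefore the convergence must be genuinely non-uniform: for arbitrarily large $n$ there must exist points whose orbits dwell at macroscopic scale up to time $\sim n$ and only afterwards collapse to $0$. Reconciling this ``long dwell at scale $\geq\epsilon_0$'' with ``every orbit eventually reaches $0$'' is exactly the difficulty, and it is what an autonomous map cannot achieve, since by (i) any autonomous map all of whose orbits converge to fixed points already has zero entropy.

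Concretely I would build a piecewise-linear example organized in time-windows $W_1,W_2,\dots$ of rapidly increasing lengths $\ell_k$, chosen so that $\ell_k/(\ell_1+\cdots+\ell_{k-1})\to\infty$ (so the terminal window dominates the elapsed time). Within each window the maps apply a two-lap tent action on a macroscopic-scale ``active'' interval, doubling at each step the number of distinguishable pieces, while the complement is contracted toward $0$; the active interval is allowed to drift so that no single point stays engaged forever. Writing $T_k=\ell_1+\cdots+\ell_k$, if the branching survives essentially undiminished through $W_k$, then a maximal set following the active interval during $W_k$ is $(T_k,\epsilon_0)$-separated and of size $\approx 2^{\ell_k}$, giving $\frac1{T_k}\log s_{T_k}(\epsilon_0,I,f_{0,\infty})\to\log 2$ and hence $h(f_{0,\infty})\geq\log 2$. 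The two facts left to verify are the separated-set lower bound (by tracking, for each window, $2^{\ell_k}$ points whose itineraries through the two laps differ and which are therefore pulled $\epsilon_0$-apart at the first differing step) and the pointwise convergence $f_0^n(x)\to0$. I expect the genuine crux to be arranging the drift of the active interval together with the contraction of its complement so that the set of points engaged with the branching for all time is trivial---this is what guarantees every orbit is eventually captured by $0$---while the branching nonetheless persists at full rate on each window; it is exactly this configuration that (i) shows to be impossible in the autonomous category.
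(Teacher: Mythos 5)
First, a point of calibration: the paper contains no proof of this proposition at all --- both parts are quoted results, (i) from \cite{Lamparta} and (ii) from \cite{BO12} --- so your attempt can only be measured against the cited literature, not against an in-paper argument. Your part (i) is complete and correct: Krylov--Bogolyubov supplies invariant measures; Poincar\'e recurrence applied along a countable basis gives $x\in\omega(x)$ for $\mu$-a.e.\ $x$; since by hypothesis $\omega(x)$ is a single fixed point, this forces $\mu(\mathrm{Fix}(f))=1$; then $f^{-1}\xi=\xi\pmod{\mu}$ for every finite partition (invariance of $\mu$ propagates the null sets through the iterates), so $h_{\mu}(f)=0$ for every invariant $\mu$, and the variational principle gives $h(f)=0$. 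This is a clean, self-contained route, and it sidesteps the alternative via Bowen's ``entropy is carried by the non-wandering set,'' which would require identifying $\Omega(f)$.

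Part (ii) is where the genuine gap lies. It is an existence statement, so the construction \emph{is} the theorem, and you supply only an architecture, explicitly deferring the two verifications (the $(T_k,\epsilon_0)$-separated set of size $2^{\ell_k}$, and $f_0^{n}(x)\to 0$ for every $x\neq 0,1$) and flagging the second as an unresolved ``crux.'' That crux is real, and sharper than your sketch suggests: if in each window you run a full two-lap horseshoe on a macroscopic interval and then contract (or even collapse) everything outside a sliver near $1$, the set of points surviving every collapse is an intersection of sets with exponentially many components --- a Cantor set, not $\{1\}$ --- and those uncountably many survivors are not asymptotic to $0$, which destroys the example. The repair, which is in effect the design of \cite{BO12}, needs two ingredients absent from your plan: (a) collapse \emph{exactly} rather than asymptotically, i.e.\ between blocks insert $g_k$ fixing $0$ and $1$ with $g_k([0,a_k])=\{0\}$ and $g_k$ mapping $[a_k,1]$ linearly onto $[0,1]$, so that every non-surviving orbit lands on the fixed point $0$ at a finite time; and (b) make the block map $F$ attain a neighborhood of $1$ only through its rightmost monotone branch --- e.g.\ $F$ carries a two-lap horseshoe with values in $[0,1-\eta]$ and is increasing on a right piece with $F(1)=1$ --- so that for $a>1-\eta$ the preimage $(f_0^{m})^{-1}((a,1])$ is a \emph{single} interval adjacent to $1$. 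Then choosing $a_k\uparrow 1$ recursively, after seeing the finite composition so far, makes the nested survivor intervals shrink to $\{1\}$, while surjectivity of $f_0^{T_{k-1}}$ lets you pull back a $2^{\ell_k}$-element $(\ell_k,\epsilon_0)$-separated set of the horseshoe to a $(T_k,\epsilon_0)$-separated set; with $\ell_k/T_k\to 1$ this yields $h(f_{0,\infty})\geq\log 2$. Your motivational remark about non-uniform convergence is sound (at a fixed scale $\epsilon$, uniform collapse by time $N(\epsilon)$ caps $s_n(\epsilon)$ at the finite constant $s_{N(\epsilon)}(\epsilon)$), but it carries none of this weight: without (a), (b) and the recursive choices pinned down, your part (ii) remains a plausible outline rather than a proof.
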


\begin{remark}
The example given in {\rm(}\cite{BO12},Theorem 4{\rm)} is to show positive topological entropy does not necessarily imply Li-Yorke chaos for non-autonomous dynamical systems, which is of big difference from that of classical case.
\end{remark}

\subsection{Some relations of topological entropy between NDS and its induced systems}
For any $u\in\mathcal{F}(X)$, define the endograph of $u$ by
\begin{equation*}
end(u):=\{(x,a)\in X\times I: u(x)\geq a\}.
\end{equation*}
Let $\emptyset_{X}$ be the empty fuzzy set defined by $\emptyset_{X}(x)=0$ for any $x\in X$,
and $\mathcal{F}^0(X)$ be the set of all nonempty upper semicontinuous fuzzy sets. Define
the endograph metric $d_E$ by
\begin{equation*}
d_{E}(u,v)=D_{X\times I}(end(u),end(v)),\;u,v\in\mathcal{F}^{0}(X),
\end{equation*}
where $D_{X\times I}$ is the Hausdorff metric on $\mathcal{K}(X\times I)$. Moreover, we use
\[d_{E}(\emptyset_{X},\emptyset_{X})=0\]
and
\[d_{E}(\emptyset_{X},u)=D_{supp(u)\times I}(end(\emptyset_{X}),end(u)),\;u\in\mathcal{F}^{0}(X),\]
in order to obtain a metric on the whole space $\mathcal{F}(X)$ and $(\mathcal{F}(X), d_{E})$ is a compact metric space \cite{Kupka11}.
Since we discuss the topological entropy of $(\mathcal{F}(X),\tilde{f}_{0,\infty})$ here, $\mathcal{F}(X)$
is equipped with the metric $d_{E}$ in this section, and $\mathcal{F}(X)$
is equipped with the metric $d_{\infty}$ in Sections 3--5.

\begin{theorem}\label{a}
Let $f_{0,\infty}$ be equi-continuous on $X$. Then $h(f_{0,\infty})>0$ implies that
$h(\tilde{f}_{0,\infty})=h(\bar{f}_{0,\infty})=+\infty$.
\end{theorem}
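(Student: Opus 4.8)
The plan is to establish the two equalities in turn, proving $h(\bar f_{0,\infty})=+\infty$ first and then transferring the conclusion to the fuzzified system via the embedding of $\mathcal K(X)$ into $\mathcal F(X)$ by characteristic functions. For the hyperspace I would exploit positivity of entropy by passing to finite products. For each $k\in\mathbf{Z^{+}}$ consider the subspace
\[
\mathcal K_k(X)=\{A\in\mathcal K(X): |A|\le k\}
\]
of compact sets with at most $k$ points, together with the ``union'' map $\phi_k: X^k\to\mathcal K_k(X)$, $\phi_k(x_1,\dots,x_k)=\{x_1,\dots,x_k\}$. The aim is to show that $\phi_k$ is a finite-to-one equi-semiconjugacy onto an invariant subsystem, so that Proposition \ref{e} and Lemma \ref{l} apply.

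Thus I would first verify the structural facts: (i) $\mathcal K_k(X)$ is closed in $\mathcal K(X)$, hence a compact metric space (under Hausdorff convergence the number of distinct points cannot jump above $k$), and it is $\bar f_{0,\infty}$-invariant since $f_n$ carries a set of at most $k$ points to one of at most $k$ points; (ii) $\phi_k$ is surjective, $1$-Lipschitz, and uniformly finite-to-one, because $\phi_k^{-1}(A)$ is the set of surjections $\{1,\dots,k\}\to A$, whose number is at most $k^{k}$; and (iii) $\phi_k$ intertwines the dynamics, $\phi_k\circ f_n^{(k)}=\bar f_n\circ\phi_k$. Equi-continuity of $f_{0,\infty}$ passes to $f^{(k)}_{0,\infty}$ on $X^k$ (coordinatewise, in the max metric) and to $\bar f_{0,\infty}$ on $\mathcal K(X)$ (a modulus $\delta$ good for every $f_n$ on $X$ is good for every $\bar f_n$ in the Hausdorff metric). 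With these checks, Proposition \ref{e} gives $h(\bar f_{0,\infty}|_{\mathcal K_k(X)})=h(f^{(k)}_{0,\infty})$, and Lemma \ref{l} evaluates the right side as $k\,h(f_{0,\infty})$. Since $\mathcal K_k(X)$ is invariant, $h(\bar f_{0,\infty})\ge k\,h(f_{0,\infty})$; as $h(f_{0,\infty})>0$ and $k$ is arbitrary, letting $k\to\infty$ yields $h(\bar f_{0,\infty})=+\infty$.

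For the fuzzified system, where $\mathcal F(X)$ carries the endograph metric $d_E$, I would embed the hyperspace by $\iota:\mathcal K(X)\to\mathcal F(X)$, $\iota(A)=\chi_A$, the fuzzy set with $\chi_A(x)=1$ for $x\in A$ and $\chi_A(x)=0$ otherwise, which is upper semicontinuous because $A$ is closed. A direct computation of $end(\chi_A)=(A\times[0,1])\cup(X\times\{0\})$ gives $d_E(\chi_A,\chi_B)=\min\{D_X(A,B),1\}$, so $\iota$ is injective and isometric on the scale relevant to entropy. Moreover $\tilde f_n(\chi_A)=\chi_{f_n(A)}$, so $\iota$ conjugates $(\mathcal K(X),\bar f_{0,\infty})$ onto the invariant set $\iota(\mathcal K(X))\subset\mathcal F(X)$. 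Since $d_E$ and $D_X$ agree for distances $<1$, for each $\epsilon<1$ an $(n,\epsilon)$-separated set corresponds under $\iota$ to one of equal cardinality, whence $h(\tilde f_{0,\infty})\ge h(\tilde f_{0,\infty},\iota(\mathcal K(X)))=h(\bar f_{0,\infty})=+\infty$, which finishes the proof.

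I expect the main obstacle to be the bookkeeping that makes Proposition \ref{e} genuinely applicable: verifying that $\mathcal K_k(X)$ is closed (so that its entropy is defined), that the fibers of $\phi_k$ are bounded uniformly in the point, and that equi-continuity survives both the passage to $X^k$ and the passage to $\mathcal K(X)$. The endograph identity $d_E(\chi_A,\chi_B)=\min\{D_X(A,B),1\}$, although elementary, must also be treated with care, since the shared piece $X\times\{0\}$ of the two endographs contributes nothing to the Hausdorff distance and one must confirm that the maximizing points sit at height $1$.
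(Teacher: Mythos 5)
Your proof is correct and follows essentially the same route as the paper: the same finite-subset subsystems $\mathcal{K}_m(X)$ with the union map $X^m\to\mathcal{K}_m(X)$, combined with Lemma \ref{l} and Proposition \ref{e}, yielding $h(\bar f_{0,\infty})\geq m\,h(f_{0,\infty})$ for every $m$. The only difference is that for $h(\tilde f_{0,\infty})\geq h(\bar f_{0,\infty})$ the paper simply cites Theorem 5.2 of \cite{Shao21}, whereas you reprove that inequality in-line via the characteristic-function embedding and the identity $d_E(\chi_A,\chi_B)=\min\{D_X(A,B),1\}$, which checks out (and your uniform fiber bound $k^k$ for the union map is correct, indeed safer than the paper's stated $m!$, which can be exceeded on fibers over sets of cardinality strictly less than $m$).
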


\begin{proof}
Fix $m\in\mathbf{Z^{+}}$.
Denote
\[\mathcal{K}_m(X)=\{C\in\mathcal{K}(X): |C|\leq m\}.\]
Define a map $\Phi: X^{m}\to \mathcal{K}_m(X)$ by
\[\Phi((x_1, x_2,\cdots,x_m))=\{x_1, x_2,\cdots,x_m\},\;(x_1, x_2,\cdots,x_m)\in X^{m}.\]
Then $\Phi$ is continuous, surjective and at most $m!$-to-one.
For any $(x_1, x_2,\cdots,x_m)\in X^{m}$,
\[\Phi\circ f_{k}^{(m)}((x_1, x_2,\cdots,x_m))=\{f_{k}(x_1),\cdots,f_{k}(x_m)\}
=\bar{f}_{k}\circ\Phi((x_1, x_2,\cdots,x_m)),\;k\geq0.\]
So, $(X^m, f_{0,\infty}^{(m)})$ is topologically equi-semiconjugate to $(\mathcal{K}_m(X),\bar{f}_{0,\infty})$.
It is easy to verify that $\{f_n^{(m)}\}_{n=0}^{\infty}$ is equi-continuous in $X^{m}$
and $\{\bar{f}_n\}_{n=0}^{\infty}$ is equi-continuous in $\mathcal{K}_m(X)$ since $\{f_n\}_{n=0}^{\infty}$ is equi-continuous in $X$.
Thus, by Lemma \ref{l} and Proposition \ref{e}, we have
\[mh(f_{0,\infty})=h(f_{0,\infty}^{(m)})=h(\bar{f}_{0,\infty},\mathcal{K}_m(X))\leq h(\bar{f}_{0,\infty}).\]
Hence, $h(f_{0,\infty})>0$ implies that $h(\bar{f}_{0,\infty})=+\infty$.
In addition, $h(\tilde{f}_{0,\infty})\geq h(\bar{f}_{0,\infty})=+\infty$ by Theorem 5.2 in \cite{Shao21}.
\end{proof}

Define $\chi_{K}\in\mathcal{F}^{1}(X)$ by $\chi_{K}(x)=1$ if $x\in K$ and $\chi_{K}(x)=0$
if $x\notin K$. We briefly write $\chi_{z}$ if $K$ is the single set $\{z\}$. Let $\mathcal{C}(S^1)$
be the hyperspace of all continua, that is nonempty compact connected subsets of the unit circle $S^{1}$.
Note that $\mathcal{C}(S^1)$ is compact in $\mathcal{K}(S^1)$ \cite{Wicks91}. Denote $\widetilde{\mathcal{C}}(S^1):=
\{\chi_{K}: K\in\mathcal{C}(S^1)\}$. Then $\widetilde{\mathcal{C}}(S^1)$ is a compact subset of $\mathcal{F}^{1}(X)$
and invariant under $\tilde{f}_{0,\infty}$; that is, $\tilde{f}_n(\widetilde{\mathcal{C}}(S^1))\subset\widetilde{\mathcal{C}}(S^1)$
for any $n\geq0$ \cite{Shao21}. Motivated by Theorem 4 in \cite{Lamparta}, we get the following result.

\begin{theorem}\label{s}
Let $f_{0,\infty}$ be a sequence of equi-continuous and orientation preserving maps of $S^{1}$.
Then $h(\tilde{f}_{0,\infty},\widetilde{\mathcal{C}}(S^1))=h(\bar{f}_{0,\infty},\mathcal{C}(S^1))=h(f_{0,\infty},S^{1})=0$.
\end{theorem}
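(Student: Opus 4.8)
The plan is to reduce all three quantities to the base circle system, exploiting that orientation preservation makes the dynamics order preserving and that a non-degenerate arc is pinned down by its two endpoints. I would first dispose of the fuzzy term. The map $\iota\colon\mathcal{C}(S^1)\to\widetilde{\mathcal{C}}(S^1)$, $\iota(K)=\chi_{K}$, is a conjugacy between $(\mathcal{C}(S^1),\bar{f}_{0,\infty})$ and $(\widetilde{\mathcal{C}}(S^1),\tilde{f}_{0,\infty})$, since Zadeh's extension gives $\tilde{f}_{n}(\chi_{K})=\chi_{f_{n}(K)}=\chi_{\bar{f}_{n}(K)}$; and because $end(\chi_{K})=(K\times I)\cup(S^1\times\{0\})$, a short computation shows $d_{E}(\chi_{K},\chi_{L})=D_{X\times I}(end(\chi_K),end(\chi_L))=D_{S^1}(K,L)$, so $\iota$ is in fact an isometry (cf. the invariance statement in \cite{Shao21}). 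Hence $h(\tilde{f}_{0,\infty},\widetilde{\mathcal{C}}(S^1))=h(\bar{f}_{0,\infty},\mathcal{C}(S^1))$, and it remains to show that this quantity and $h(f_{0,\infty},S^1)$ both vanish.

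For the base system I would lift each orientation preserving map $f_{n}$ to a non-decreasing $F_{n}\colon\mathbf{R}\to\mathbf{R}$ with $F_{n}(x+1)=F_{n}(x)+1$, so that every composition $f_{0}^{j}$ preserves the cyclic order on $S^1$. Given an $(n,\epsilon)$-separated set $\{x_{1},\dots,x_{k}\}$ written in cyclic order, the consecutive counterclockwise gaps $g_{i}^{j}$ at time $j$ are non-negative and satisfy $\sum_{i=1}^{k}g_{i}^{j}=1$. Separation of the consecutive pair $(x_{i},x_{i+1})$ forces $g_{i}^{j}>\epsilon$ for some $j<n$; but at each fixed time at most $1/\epsilon$ of the gaps can exceed $\epsilon$, so assigning every pair to a witnessing time yields $k\le n/\epsilon$. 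Thus $s_{n}(\epsilon,S^1,f_{0,\infty})$ grows only linearly in $n$, giving $h(f_{0,\infty},S^1)=0$; note that this step uses only orientation preservation.

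The key step is the hyperspace term. A non-degenerate arc is determined by its starting point $s\in S^1$ and its length $L\in[0,1]$, and orientation preservation gives $\bar{f}_{n}([s,s\oplus L])=[f_{n}(s),f_{n}(s)\oplus\lambda_{n}(s,L)]$, where $s\oplus L$ denotes the point with lift $\tilde{s}+L$ and $\lambda_{n}(s,L)=F_{n}(\tilde{s}+L)-F_{n}(\tilde{s})\in[0,1]$. Thus $P\colon S^1\times[0,1]\to\mathcal{C}(S^1)$, $P(s,L)=[s,s\oplus L]$, is a continuous surjection intertwining $\bar{f}_{0,\infty}$ with the skew product $G_{n}(s,L)=(f_{n}(s),\lambda_{n}(s,L))$, and uniform continuity of $P$ yields $h(\bar{f}_{0,\infty},\mathcal{C}(S^1))\le h(G_{0,\infty},S^1\times[0,1])$. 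The crucial observation is that $q\colon S^1\times[0,1]\to S^1\times S^1$, $q(s,L)=(s,s\oplus L)$, is an (exactly two-to-one) finite-to-one semiconjugacy from $G_{0,\infty}$ to $f_{0,\infty}^{(2)}$, and both systems are equi-continuous because the common modulus of $\{f_{n}\}$ controls $\lambda_{n}$ in both variables. Proposition \ref{e} then gives $h(G_{0,\infty})=h(f_{0,\infty}^{(2)})$, and Lemma \ref{l} together with the previous step gives $h(f_{0,\infty}^{(2)})=2\,h(f_{0,\infty},S^1)=0$; chaining the two inequalities closes the argument.

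I expect the main obstacle to be the boundary of the arc space, namely the degenerate continua, i.e. singletons (length $0$) and the full circle (length $1$): these form the diagonal in endpoint coordinates, where the naive parametrization $S^1\times S^1\to\mathcal{C}(S^1)$ is discontinuous, because nearby endpoints may describe either a tiny arc or a near-full arc. Passing to the $(s,L)$ coordinate removes this ambiguity, and the decisive trick is to split the two comparisons so that the genuinely bad, infinite fiber of $P$ over the full circle enters only through the harmless inequality $h(\bar{f}_{0,\infty},\mathcal{C}(S^1))\le h(G_{0,\infty})$, while the entropy \emph{equality} is extracted through the honestly finite-to-one map $q$ via Proposition \ref{e}. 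Checking equi-continuity of $G_{0,\infty}$ and the two intertwining identities $P\circ G_{n}=\bar{f}_{n}\circ P$ and $q\circ G_{n}=f_{n}^{(2)}\circ q$ are the remaining routine verifications.
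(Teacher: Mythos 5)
Your proposal is correct, and for the two substantive reductions it is essentially the paper's own argument written in different coordinates: your parametrized arc space $S^1\times[0,1]$ with the skew product $G_n(s,L)=(f_n(s),\lambda_n(s,L))$ is homeomorphic, equivariantly, to the paper's marked-arc space $\Theta=\{(a,S^1):a\in S^1\}\cup\{(a,[a,b]):a,b\in S^1\}$ via $(s,L)\mapsto(s,[s,s\oplus L])$; under this dictionary your $P$ is the paper's projection $\pi:\Theta\to\mathcal{C}(S^1)$, your two-to-one $q$ is the paper's $\Psi(a,[a,b])=(a,b)$, $\Psi(a,S^1)=(a,a)$, and the splitting you highlight as the decisive trick --- inequality through the infinite-fibered factor map, equality through the finite-to-one map via Proposition \ref{e} and Lemma \ref{l} --- is exactly how the paper chains $h(\bar f_{0,\infty},\mathcal{C}(S^1))\le h(F_{0,\infty})=h(f_{0,\infty}^{(2)})=2h(f_{0,\infty})=0$, as is your conjugacy $K\mapsto\chi_K$ for the fuzzy term. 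The one genuine divergence is the base case: where the paper simply cites Theorem D of \cite{Kolyada96} for $h(f_{0,\infty},S^1)=0$, you give a self-contained counting proof (cyclic order preserved by degree-one monotone lifts, each separated consecutive pair witnessed by a gap exceeding $\epsilon$ at some time, at most $1/\epsilon$ such gaps per time, hence $s_n(\epsilon)\le n/\epsilon$), which is correct, more elementary, and notably uses only orientation preservation, not equi-continuity. One verification you wave at deserves a line of care: equi-continuity of $\{f_n\}$ on $S^1$ transfers to equi-continuity of the lift increments $\lambda_n$ only via monotonicity of $F_n$ plus an intermediate-value argument (for $t\in[0,\delta]$ the continuous non-decreasing function $t\mapsto F_n(\tilde s+t)-F_n(\tilde s)$ starts at $0$ and, if the circle modulus gives distance $<\epsilon<1/2$, cannot cross $[\epsilon,1-\epsilon]$, so it stays below $\epsilon$); this rules out the a priori possibility that nearby points have images at small circle distance but lift difference near $1$. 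The paper glosses the corresponding claim for $F_{0,\infty}$ on $\Theta$ in the same way, so this is a shared, fillable gap rather than a defect of your route.
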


\begin{proof}
$h(f_{0,\infty},S^{1})=0$ by Theorem D in \cite{Kolyada96}.
It is to show that $h(\bar{f}_{0,\infty},\mathcal{C}(S^1))=0$. Denote
\[\Theta:=\{(a,S^1): a\in S^1\}\cup\{(a,[a,b]): a,b\in S^1\},\]
where $[a,b]$ is the collection of all the points lying in counterclockwise sense between $a$ and $b$ and $[a,b]=\{a\}$ if $a=b$.
Then $\Theta$ is a closed subset of $S^1\times\mathcal{C}(S^1)$.
For any $n\geq0$, define a map $F_n: \Theta\to\Theta$ by
\[F_n(a,S^1)=(f_{n}(a),S^1),\]
and
\[F_n(a,[a,b])=(f_{n}(a),f_{n}([a,b]))=(f_{n}(a),[f_{n}(a),f_{n}(b)]),\;a,b\in S^1.\]
Since $f_{0,\infty}$ is equi-continuous, $F_{0,\infty}:=\{F_n\}_{n=0}^{\infty}$
is also equi-continuous in $\Theta$.
Let $\pi: \Theta\to\mathcal{C}(S^1)$ be the projection map.
Then $\pi$ is a topological equi-semiconjugacy between $(\Theta,F_{0,\infty})$ and $(\mathcal{C}(S^1),\bar{f}_{0,\infty})$.
Theorem B in \cite{Kolyada96} ensures that
\begin{align}\label{t}
h(F_{0,\infty})\geq h(\bar{f}_{0,\infty},\mathcal{C}(S^1)).
\end{align}
Define a map $\Psi:\Theta\to S^1\times S^1$ by
\[\Psi((a,S^1))=(a,a),\;a\in S^1,\]
and
\[\Psi((a,[a,b]))=(a,b),\;a,b\in S^1.\]
Then $\Psi$ is continuous and surjective (at most two-to-one) in $\Theta$ and satisfies that
\[\Psi\circ F_n=f_n\times f_n\circ\Psi,\;n\geq0.\]
Thus, $(\Theta,F_{0,\infty})$ is topologically equi-semiconjugate to $(S^1\times S^1,f_{0,\infty}^{(2)})$
where $f_{0,\infty}^{(2)}:=\{f_n\times f_n\}_{n=0}^{\infty}$.
It follows from Proposition \ref{e} that
\[h(F_{0,\infty})=h(f_{0,\infty}^{(2)})=2h(f_{0,\infty})=0.\]
Hence, $h(\bar{f}_{0,\infty},\mathcal{C}(S^1))=0$ by (\ref{t}).

Define $g:\mathcal{C}(S^1)\to\widetilde{\mathcal{C}}(S^1)$ by $g(K)=\chi_K$ for any $K\in\mathcal{C}(S^1)$.
Clearly, $g$ is bijective. With a similar method used in the proof of Theorem 5.2 in \cite{Shao21}, one can show that
$(\mathcal{C}(S^1),\bar{f}_{0,\infty})$ is topologically $g$-equi-conjugate to $(\widetilde{\mathcal{C}}(S^1),\tilde f_{0,\infty})$
Thus,
\[h(\tilde{f}_{0,\infty},\widetilde{\mathcal{C}}(S^1))=h(\bar{f}_{0,\infty},\mathcal{C}(S^1))=0.\]
\end{proof}

To proceed, we need the following lemma.

\begin{lemma}\label{uniformconverge}
$\{f_n\}_{n=0}^{\infty}$ converges uniformly to $f$ if and only if $\{\tilde{f}_n\}_{n=0}^{\infty}$
converges uniformly to $\tilde{f}$ if and only if $\{\bar{f}_n\}_{n=0}^{\infty}$
converges uniformly to $\bar{f}$.
\end{lemma}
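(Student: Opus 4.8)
The plan is to establish the two biconditionals $f_n\to f$ uniformly $\Longleftrightarrow \bar f_n\to\bar f$ uniformly, and $f_n\to f$ uniformly $\Longleftrightarrow \tilde f_n\to\tilde f$ uniformly; chaining them gives the three-fold equivalence. Throughout I would abbreviate $\|f_n-f\|:=\sup_{x\in X}d(f_n(x),f(x))$, so that uniform convergence of $f_{0,\infty}$ to $f$ is exactly $\|f_n-f\|\to0$. Everything rests on one elementary identity, which I would record first: since each $f_n$ is continuous, $X$ is compact and every $u$ is upper semicontinuous, the supremum defining $\tilde f_n(u)(x)$ is attained on the compact fiber $f_n^{-1}(x)$, whence
\[
[\tilde f_n(u)]_\alpha=f_n([u]_\alpha)=\bar f_n([u]_\alpha),\qquad \alpha\in(0,1].
\]
This is the bridge between the base map and both induced maps.

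For the forward implications I would show that passing to the induced maps does not increase distances. Given $A\in\mathcal K(X)$, each $a\in A$ satisfies $d(f_n(a),f(A))\le d(f_n(a),f(a))\le\|f_n-f\|$, and symmetrically for points of $f(A)$, so $D_X(\bar f_n(A),\bar f(A))\le\|f_n-f\|$ uniformly in $A$; this yields $f_n\to f\Rightarrow\bar f_n\to\bar f$. For the fuzzified system, recall that $\mathcal F(X)$ carries the endograph metric $d_E$ in this section, so I would estimate the Hausdorff distance of endographs. Any $(x,\alpha)\in end(\tilde f_n(u))$ with $\alpha>0$ has $x=f_n(z)$ for some $z\in[u]_\alpha$ by the identity above, and then $(f(z),\alpha)\in end(\tilde f(u))$ lies within $d(f_n(z),f(z))\le\|f_n-f\|$; points with $\alpha=0$ already lie on the common slice $X\times\{0\}\subset end(\tilde f(u))$ and cost nothing. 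The symmetric estimate gives $d_E(\tilde f_n(u),\tilde f(u))\le\|f_n-f\|$, once more uniformly in $u$, so $f_n\to f\Rightarrow\tilde f_n\to\tilde f$.

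For the converse implications I would test the induced systems on the simplest objects. Taking the singleton $A=\{x\}$ gives $D_X(\bar f_n(\{x\}),\bar f(\{x\}))=d(f_n(x),f(x))$, so uniform convergence of $\bar f_{0,\infty}$ forces $\|f_n-f\|\to0$. On the fuzzy side I would use the point indicator $\chi_x$, for which $\tilde f_n(\chi_x)=\chi_{f_n(x)}$ directly from the definition of Zadeh's extension. Since $end(\chi_w)=(\{w\}\times I)\cup(X\times\{0\})$, a short Hausdorff computation gives
\[
d_E(\chi_{f_n(x)},\chi_{f(x)})=\min\{1,\,d(f_n(x),f(x))\},
\]
and uniform convergence of $\tilde f_{0,\infty}$ evaluated at these test points again yields $\|f_n-f\|\to0$ (the truncation by $1$ is harmless once the distances are small).

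The one delicate point is the endograph bookkeeping. Writing the map $(x,\alpha)\mapsto(f_n(x),\alpha)$ as $f_n\times\mathrm{id}_I$, one sees that $(f_n\times\mathrm{id}_I)(end(u))$ and $end(\tilde f_n(u))$ coincide off the slice $X\times\{0\}$, where they may differ when $f_n$ is not surjective; I would verify that this common slice contributes nothing to $d_E$, so the discrepancy is invisible. The empty fuzzy set is fixed by every $\tilde f_n$ and $\tilde f$, hence contributes no error. Every estimate above is a bound by $\|f_n-f\|$ that is manifestly independent of the chosen set or fuzzy set, so uniformity over the noncompact space $\mathcal F(X)$ is automatic; I expect the attainment-and-slice verification, rather than any of the metric inequalities, to be the main obstacle.
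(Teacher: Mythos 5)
Your proof is correct, and its skeleton is the same as the paper's: both equivalences are routed through the base system, with singletons $\{x\}$ witnessing the hyperspace converse and the point indicators $\chi_x$ witnessing the fuzzy converse, exactly as in the paper's proof. The genuine difference is self-containedness. Where the paper outsources the implication $f_n\to f\Rightarrow\tilde f_n\to\tilde f$ to Proposition 2 of \cite{Kupka11} and the identity $d_E(\tilde f_n(\chi_z),\tilde f(\chi_z))=\min\{d(f_n(z),f(z)),1\}$ to Lemma 5.1(ii) of \cite{Shao21}, you prove both inline from the level-set identity $[\tilde f_n(u)]_\alpha=\bar f_n([u]_\alpha)$, $\alpha\in(0,1]$ (valid by upper semicontinuity of $u$ and compactness of the fibers $f_n^{-1}(x)$, as you note), together with a direct Hausdorff estimate on endographs. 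This buys a quantitative statement the paper never makes explicit, namely that the induction functors are nonexpansive with respect to the uniform distance: $D_X(\bar f_n(A),\bar f(A))\le\sup_x d(f_n(x),f(x))$ and $d_E(\tilde f_n(u),\tilde f(u))\le\sup_x d(f_n(x),f(x))$, uniformly in $A$ and $u$, so uniformity over the noncompact space $\mathcal{F}(X)$ is automatic. The two boundary cases you flag are handled correctly: the slice $X\times\{0\}$ lies in every endograph by the paper's definition of $end(u)$, so the failure of $(f_n\times\mathrm{id}_I)(end(u))$ to cover it when $f_n$ is not surjective is invisible to $d_E$; and $\tilde f_n(\emptyset_X)=\emptyset_X$ keeps the empty fuzzy set out of the estimates (note also that $\tilde f_n$ sends nonempty fuzzy sets to nonempty ones, so the special definition of $d_E(\emptyset_X,u)$ is never invoked). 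One small point to make explicit if this were written out in full: your formula $d_E(\chi_{f_n(x)},\chi_{f(x)})=\min\{1,d(f_n(x),f(x))\}$ presupposes the max metric on $X\times I$, which is indeed the paper's convention (compare its definition of $d\times\rho$ in Section 2.1) and the one used in \cite{Shao21}.
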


\begin{proof}
If $\{f_n\}_{n=0}^{\infty}$ converges uniformly to $f$, then $\{\tilde{f}_n\}_{n=0}^{\infty}$
converges uniformly to $\tilde{f}$ by Proposition 2 in \cite{Kupka11}.
Conversely, if $\{\tilde{f}_n\}_{n=0}^{\infty}$ converges uniformly to $\tilde{f}$,
then for any $0<\epsilon<1$, there exists $N\in\mathbf{Z^{+}}$ such that for any $u\in\mathcal{F}(X)$,
\begin{equation}\label{def}
d_E(\tilde{f}_n(u),\tilde{f}(u))<\epsilon,\;n>N.
\end{equation}
Let $z\in X$. By (\ref{def}) and Lemma 5.1 (ii) in \cite{Shao21},
\[\min\{d(f_n(z),f(z)),1\}=d_E(\tilde{f}_n(\chi_{z}),\tilde{f}(\chi_{z}))<\epsilon,\;n>N,\]
which implies that
\[d(f_n(z),f(z))<\epsilon,\;n>N.\]
This proves that $\{f_n\}_{n=0}^{\infty}$ converges uniformly to $f$.

Suppose that $\{f_n\}_{n=0}^{\infty}$ converges uniformly to $f$. Let $\epsilon>0$.
Then there exists $N\in\mathbf{Z^{+}}$ such that $d(f_n(x),f(x))<\epsilon$ for any
$x\in X$ and $n>N$. Given $K\in\mathcal{K}(X)$. It is easy to verify that
\[D_X(f_n(K),f(K))<\epsilon,\;n>N,\]
which yields that $\{\bar{f}_n\}_{n=0}^{\infty}$ converges uniformly to $\bar{f}$.
Conversely, suppose that $\{\bar{f}_n\}_{n=0}^{\infty}$ converges uniformly to $\bar{f}$.
Then for any $\epsilon>0$, there exists $N\in\mathbf{Z^{+}}$ such that
$D_X(f_n(K),f(K))<\epsilon$ for any $K\in\mathcal{K}(X)$ and $n>N$.
Let $x\in X$. Then
\[d(f_n(x),f(x))=D_X\big(f_n(\{x\}),f(\{x\})\big)<\epsilon,\;n>N.\]
Therefore, $\{f_n\}_{n=0}^{\infty}$ converges uniformly to $f$.
\end{proof}

Recall that $(X,f_{0,\infty})$ is topologically transitive if $N(U,V)\neq\emptyset$
for any nonempty open subsets $U,V\subset X$, where $N(U,V):=\{n\in\mathbf{Z^{+}}:f_{0}^{n}(U)\cap V\neq\emptyset\}$;
it is topologically mixing if there exists $N_0\in \mathbf{Z^{+}}$ such that $N(U,V)\supset[N_0,+\infty)
\cap\mathbf{Z^{+}}$ for any nonempty open subsets $U,V\subset X$;
it is topologically exact if there exists $N_1\in \mathbf{Z^{+}}$ such that
$f_0^n(U) = X$ for any nonempty open subset $U\subset X$ and $n\geq N_1$.
Clearly, topological exactness$\Rightarrow$mixing$\Rightarrow$transitivity.

The following result shows a big difference between non-autonomous and classical (i.e. autonomous) dynamical systems.

\begin{theorem}\label{transitivity-entropy}
{\rm(i)} If $(I,f)$ is topologically transitive, then $h(\tilde{f})=h(\bar{f})=\infty$.

{\rm(ii)} There exists $(I,f_{0,\infty})$ such that $(I,f_{0,\infty})$ is topologically transitive but
$h(\tilde{f}_{0,\infty})=h(\bar{f}_{0,\infty})=0$.
\end{theorem}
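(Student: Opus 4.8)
For the autonomous statement I would simply feed Theorem~\ref{a} the right input. The external ingredient is the classical fact (Blokh) that every topologically transitive continuous interval map has positive topological entropy, so $h(f)>0$. Since the constant sequence $f_{0,\infty}=\{f\}_{n=0}^{\infty}$ is trivially equi-continuous (it is a single uniformly continuous map), Theorem~\ref{a} applies directly and yields $h(\bar f)=h(\tilde f)=+\infty$. Thus (i) is immediate once transitivity is upgraded to positive entropy.

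\textbf{Part (ii), construction.} The plan is to \emph{dilute} a fixed positive-entropy transitive map by long blocks of the identity, so that the pointwise dynamics stays transitive while all genuine complexity is confined to a sparse set of times and is therefore washed out by the $\frac1n$–normalisation in the definition of entropy. Fix a transitive interval map $T$ (say the tent map $T(x)=1-|2x-1|$), fix any infinite set $S\subset\mathbf{N}$ of density zero with $0\notin S$, and set
\[
f_n=\begin{cases}T,& n\in S,\\ \mathrm{id}_I,& n\notin S.\end{cases}
\]
Writing $s_n:=|S\cap[0,n)|$, the inserted identities drop out of every composition, so $f_0^n=T^{s_n}$; by functoriality of $A\mapsto f(A)$ and of the Zadeh extension one gets likewise $\bar f_0^n=\bar T^{\,s_n}$ and $\tilde f_0^n=\tilde T^{\,s_n}$. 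For transitivity: $s_n$ is non-decreasing with unit increments and $s_n\to\infty$, so $\{s_n:n\ge1\}=\mathbf{N}$; given nonempty open $U,V\subset I$, pick $m\ge1$ with $T^m(U)\cap V\neq\emptyset$ (transitivity of $T$) and then any $n$ with $s_n=m$, giving $f_0^n(U)\cap V\neq\emptyset$. Hence $(I,f_{0,\infty})$ is transitive.

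\textbf{Part (ii), zero entropy of all three systems.} This is the heart of the matter, and the point at which care is needed, because $h(\bar T)=h(\tilde T)=+\infty$ by part (i): the conclusion cannot be read off from any naive ``$\text{entropy}=\text{density}\times h(\bar T)$'' heuristic. I would argue one scale at a time. Fix $\epsilon>0$ and let $Y$ be any one of the compact metric spaces $I$, $(\mathcal K(I),D_I)$, $(\mathcal F(I),d_E)$ (all compact, the last by \cite{Kupka11}), with corresponding autonomous map $g\in\{T,\bar T,\tilde T\}$ and $\epsilon/2$–covering number $C_\epsilon<\infty$. Through the identity $g_0^k=g^{\,s_k}$ and the fact that $\{s_k:0\le k<n\}=\{0,1,\dots,s_{n-1}\}$, any $(n,\epsilon)$–separated set for the diluted system is an $(s_{n-1}+1,\epsilon)$–separated set for the autonomous $g$, hence has cardinality at most $C_\epsilon^{\,s_{n-1}+1}$. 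Therefore
\[
\limsup_{n\to\infty}\frac1n\log s_n(\epsilon,Y,\cdot)\le(\log C_\epsilon)\,\limsup_{n\to\infty}\frac{s_{n-1}+1}{n}=0,
\]
since $s_n=o(n)$. As this holds for every fixed $\epsilon$, letting $\epsilon\to0$ gives $h(f_{0,\infty})=h(\bar f_{0,\infty})=h(\tilde f_{0,\infty})=0$.

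\textbf{Expected obstacle.} The only delicate step is the interchange of limits. Because $C_\epsilon\to\infty$ as $\epsilon\to0$ (this divergence is exactly \emph{why} $h(\bar T)=\infty$), one must first fix $\epsilon$, use that the scale-$\epsilon$ complexity rate $\log C_\epsilon$ is finite, annihilate it with the dilution factor $s_n/n\to0$, and only afterwards send $\epsilon\to0$. I expect this order-of-limits argument, together with the clean verifications that $\bar f_0^n=\bar T^{\,s_n}$ and $\tilde f_0^n=\tilde T^{\,s_n}$, to carry all the weight; the transitivity check and the density-zero bookkeeping are routine.
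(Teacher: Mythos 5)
Your proof is correct, and while part (i) coincides with the paper's argument (the paper cites Block--Coven for $h(f)\geq\frac{1}{2}\log 2$ and then the autonomous version of the positive-entropy result, Corollary 5.1(ii) of \cite{Shao21}, which is exactly your ``upgrade transitivity, then apply Theorem \ref{a}'' route), your part (ii) is genuinely different from the paper's in both the construction and the zero-entropy mechanism. The paper does not dilute a single map: it builds a time-varying sequence out of connect-the-dots maps $F_m$, each topologically exact at scale $1/m$, arranged so that $f_0^{s_n}(J)=I$ for every dyadic interval $J\in\mathcal{A}_n$ (giving transitivity, in fact a form of exactness along a subsequence) and so that $f_n\to\mathrm{id}$ uniformly; zero entropy is then obtained softly, by transferring the uniform convergence to the induced systems (Lemma \ref{uniformconverge}) and invoking Theorem E of \cite{Kolyada96}, which bounds the entropy of a uniformly convergent NDS by that of its limit map --- here the identity on $\mathcal{K}(I)$ and on $\mathcal{F}(I)$. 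Your construction instead takes $f_n\in\{T,\mathrm{id}\}$ with $T$ fixed, so that $f_0^n=T^{s_n}$ exactly, and your entropy bound is a direct count: fix the scale $\epsilon$, code $(n,\epsilon)$-separated sets by itineraries through an $\epsilon/2$-cover of the relevant compact space, and kill the finite rate $\log C_\epsilon$ with $s_n/n\to 0$ before letting $\epsilon\to 0$. Your identification of the order of limits as the crux is exactly right --- that is what makes the argument survive $h(\bar T)=h(\tilde T)=\infty$. What each approach buys: the paper's route is computation-free once the two cited results are available, and its example has the additional structural feature $f_n\to\mathrm{id}$; yours is self-contained (no appeal to Theorem E or to the uniform-convergence transfer lemma), handles all three systems $I$, $\mathcal{K}(I)$, $(\mathcal{F}(I),d_E)$ by one uniform count using only compactness, and makes the dilution mechanism transparent. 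One small point of contact worth noting: your maps do not converge uniformly to the identity (the sequence visits $T$ infinitely often), so the paper's Theorem E argument is genuinely unavailable for your example --- the quantitative count is not just an alternative but is forced by your construction.
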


\begin{proof}
(i) It follows from Corollary 3.6 in \cite{BC87} that $h(f)\geq {1\over 2}\log 2$.
Thus, $h(\tilde{f})=h(\bar{f})=\infty$ by Corollary 5.1 (ii) in \cite{Shao21}.

(ii) Let $I=[0,1]$. First, we construct a family of functions  $F_m:I\to I$ for $m>0$. Divide $I$ into $m$ intervals $J_i\triangleq[a_i,a_{i+1}]$, $0\leq i\leq m-1$, where $a_i={i\over m}$.
For any $0\leq i\leq m-1$, put $c_i,d_i\in J_i$ with $c_i=a_i+{1\over3m}$, $d_i=a_i+{2\over3m}$, $d_{-1}=0$ and $c_m=1$. The map $F_m$ is the connect-the-dots map
such that $F_m(a_i)=a_i$, $F_m(c_i)=c_{i+1}$ and $F_m(d_i)=d_{i-1}$ for   $0\leq i\leq m-1$. Then $(I,F_m)$ is topologically exact for any $m\geq 1$.
Next,  inductively define the maps $\{f_n\}_{n=0}^{\infty}$. Let $\mathcal{A}_{n}\triangleq\{[{{i}\over{2^{n}}},{{i+1}\over{2^{n}}}]:i=0,\cdots,2^{n}-1\}$.
Then there exists $s_1\geq 1$ such that $\underbrace{F_{1}\circ\cdots\circ F_{1}}_{s_1}(J)=I$ for  $J\in\mathcal{A}_{1}$. Denote $f_i\triangleq F_1$ for  $0\leq i\leq s_1-1$.
Assume that we have already defined $s_1<s_2<\cdots<s_n$ and $\{f_j\}_{j=0}^{s_n-1}$ such that $f_{0}^{s_k}(J)=I$ for  $J\in\mathcal{A}_{k}$ and $1\leq k\leq n$.
Let us define $s_{n+1}$ and  $\{f_j\}_{j=s_n}^{s_{n+1}-1}$.
%Let $\mathcal{B}_{n}\triangleq\{f_{0}^{s_n}(J): J\in\mathcal{A}_{n+1}\}$.
For any $J\in\mathcal{A}_{n+1}$, there exists
$l\geq 1$ such that $\underbrace{F_{n+1}\circ\cdots\circ F_{n+1}}_{l}(f_{0}^{s_n}(J))=I$. Denote $s_{n+1}\triangleq s_n+l$ and $f_j\triangleq F_{n+1}$ for $s_n\leq j\leq s_{n+1}-1$.
The above construction ensures that $\{f_n\}_{n=0}^{\infty}$ converges uniformly to the identity map ${id}$ on $I$,
and $f_{0}^{s_{n}}(J)=I$ for any $n\geq 1$
and $J\in\mathcal{A}_{n}$, which means that $(I, f_{0,\infty})$ is transitive.
This was first shown in Theorem 12 of \cite{BO12}. By Lemma \ref{uniformconverge}, $\{\bar{f}_n\}_{n=0}^{\infty}$ converges uniformly to $\bar{id}$ on $\mathcal{K}(I)$
and $\{\tilde{f}_n\}_{n=0}^{\infty}$ converges uniformly to $\tilde{id}$ on $\mathcal{F}(I)$.
It then follows from Theorem E in \cite{Kolyada96} that $h(\bar f_{0,\infty})\leq h(\bar {id})=0$ and
$h(\tilde{f}_{0,\infty})\leq h(\tilde{id})=0$.
\end{proof}

\begin{remark}
This result also holds for the induced system on the space $\mathcal{M}(X)$ of all Borel probability measures,
see Theorem 5.4 in \cite{ShaoJDDE}.
\end{remark}

\section{Chain properties}

Let $\delta>0$. A finite or infinite sequence of points	$\{x_0,x_1,x_2,\cdots\}$ is said to be a
$\delta$-pseudo orbit of $(X,f_{0,\infty})$ if $d(f_{i}(x_i),x_{i+1})<\delta$ for all $i\geq0$.
A finite $\delta$-pseudo orbit $\{x_0,x_1,\cdots x_n\}$ is called a $\delta$-chain with length $n$.
$(X,f_{0,\infty})$ is said to be chain transitive if for any $x,y\in X$ and any $\epsilon>0$,
there exists an $\epsilon$-chain from $x$ to $y$;
chain weakly mixing of some order $n$ if $(X^n,f_{0,\infty}^{(n)})$ is chain transitive;
chain weakly mixing of all orders if $(X^n,f_{0,\infty}^{(n)})$ is chain transitive for all $n\in\mathbf{Z^{+}}$;
chain mixing if for any $\epsilon>0$ and $x,y\in X$, there exists
$N\in\mathbf{Z^{+}}$ such that for any $n\geq N$, there exists an
$\epsilon$-chain from $x$ to $y$ with length $n$.
It follows from Lemma 3.1 in \cite{ShaoJDDE} that chain mixing$\Rightarrow$chain weakly mixing of all orders
$\Rightarrow$chain transitivity for $(X,f_{0,\infty})$.

\begin{theorem}\label{ctransitive}
If $(\mathcal{F}^1(X),\tilde{f}_{0,\infty})$ is chain weakly mixing of some order $n$, then so is $(\mathcal{K}(X),$ $\bar f_{0,\infty})$.
If $(\mathcal{K}(X),\bar f_{0,\infty})$ is chain weakly mixing of order $n$, then so is $(X,f_{0,\infty})$.
\end{theorem}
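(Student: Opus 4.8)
The plan is to handle the two implications by different elementary mechanisms: the first through a dynamics-preserving factor map from $\mathcal{F}^1(X)$ onto $\mathcal{K}(X)$, the second by extracting a point-pseudo-orbit from a set-pseudo-orbit whose endpoints happen to be singletons. For the first implication I would introduce the level-one map $\pi:\mathcal{F}^1(X)\to\mathcal{K}(X)$, $\pi(u)=[u]_1$, and verify three facts. (i) It is well defined and surjective: $[u]_1\in\mathcal{K}(X)$ for normal $u$, while $\pi(\chi_K)=[\chi_K]_1=K$ gives surjectivity. (ii) It is $1$-Lipschitz, hence uniformly continuous, because $D_X([u]_1,[v]_1)\le\sup_{\alpha\in(0,1]}D_X([u]_\alpha,[v]_\alpha)=d_\infty(u,v)$. (iii) It intertwines the dynamics, $[\tilde f_j(u)]_1=\bar f_j([u]_1)$ for every $j$: for $x\in X$ the supremum $\tilde f_j(u)(x)=\sup_{z\in f_j^{-1}(x)}u(z)$ is attained since $u$ is upper semicontinuous and $f_j^{-1}(x)$ is compact, so $x\in[\tilde f_j(u)]_1$ iff some $z\in[u]_1$ satisfies $f_j(z)=x$, i.e. iff $x\in f_j([u]_1)$. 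Passing to $n$-fold products, $\pi^{(n)}:=\pi\times\cdots\times\pi$ is a uniformly continuous surjective semiconjugacy from $(\mathcal{F}^1(X)^n,\tilde f_{0,\infty}^{(n)})$ onto $(\mathcal{K}(X)^n,\bar f_{0,\infty}^{(n)})$. A standard factor-map argument then pushes chain transitivity forward: given $\epsilon>0$, uniform continuity yields $\delta>0$ so that a $\delta$-chain upstairs maps to an $\epsilon$-chain downstairs, and surjectivity realizes any prescribed endpoints of $\mathcal{K}(X)^n$; hence chain transitivity of $\mathcal{F}^1(X)^n$ forces that of $\mathcal{K}(X)^n$, which is exactly chain weak mixing of order $n$ for $(\mathcal{K}(X),\bar f_{0,\infty})$.

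For the second implication I would pull chains back along the isometric embedding $x\mapsto\{x\}$ of $X^n$ into $\mathcal{K}(X)^n$. Fix $\mathbf x=(x_1,\dots,x_n),\mathbf y=(y_1,\dots,y_n)\in X^n$ and $\epsilon>0$, and apply chain transitivity of $(\mathcal{K}(X)^n,\bar f_{0,\infty}^{(n)})$ to the singleton tuples $(\{x_1\},\dots,\{x_n\})$ and $(\{y_1\},\dots,\{y_n\})$ to get an $\epsilon$-chain $\mathbf A_0,\dots,\mathbf A_m$ with $\mathbf A_j=(A_j^1,\dots,A_j^n)$, $\max_{1\le k\le n}D_X(\bar f_j(A_j^k),A_{j+1}^k)<\epsilon$, $A_0^k=\{x_k\}$ and $A_m^k=\{y_k\}$. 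For each coordinate $k$ set $\xi_0^k=x_k$ and build $\xi_{j+1}^k\in A_{j+1}^k$ inductively: since $f_j(\xi_j^k)\in\bar f_j(A_j^k)$, the one-sided Hausdorff bound gives $d(f_j(\xi_j^k),A_{j+1}^k)<\epsilon$, so a point $\xi_{j+1}^k\in A_{j+1}^k$ with $d(f_j(\xi_j^k),\xi_{j+1}^k)<\epsilon$ exists. Because $A_m^k=\{y_k\}$ is a singleton, the construction is forced to terminate at $\xi_m^k=y_k$. The tuples $\boldsymbol\xi_j=(\xi_j^1,\dots,\xi_j^n)$ then form an $\epsilon$-chain in $X^n$ from $\mathbf x$ to $\mathbf y$, establishing chain transitivity of $(X^n,f_{0,\infty}^{(n)})$.

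I expect the only genuinely delicate point to be property (iii), the commutation $[\tilde f_j(u)]_1=\bar f_j([u]_1)$, which is precisely where upper semicontinuity of $u$ and compactness of $X$ (ensuring the defining supremum is attained) enter; everything else is bookkeeping with the maximum metrics on the products together with the elementary observation that the endpoints of the set-chains are singletons, which is exactly what lets the extracted point-chain land on $\mathbf x$ and $\mathbf y$. Note also that the non-compactness of $(\mathcal{F}^1(X),d_\infty)$ causes no difficulty, since the factor-map argument needs only the uniform (indeed Lipschitz) continuity and surjectivity of $\pi^{(n)}$, not compactness.
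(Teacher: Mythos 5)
Your proposal is correct and follows essentially the same route as the paper: the first implication is the paper's argument of projecting the fuzzy $\epsilon$-chain between the characteristic functions $\chi_{A_i},\chi_{B_i}$ to its level sets (the paper does this for every $\alpha\in(0,1]$ via $d_\infty(\tilde f_j(u),v)=\sup_{\alpha}D_X(\bar f_j([u]_\alpha),[v]_\alpha)$, which you package as the $1$-Lipschitz semiconjugacy $u\mapsto[u]_1$, proving explicitly the commutation $[\tilde f_j(u)]_1=\bar f_j([u]_1)$ that the paper uses implicitly), and the second implication is the paper's selection of points $a_i^j\in A_i^j$ from the set-valued chain, with your induction merely spelling out why the singleton endpoints force the chain to start at $\mathbf x$ and end at $\mathbf y$.
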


\begin{proof}
Suppose that $(\mathcal{F}^1(X),\tilde{f}_{0,\infty})$ is chain weakly mixing of order $n$.
Let $\epsilon>0$ and $(A_1,\cdots,A_n),(B_1,\cdots,B_n)\in(\mathcal{K}(X))^{n}$.
Then $(\chi_{A_1},\cdots,\chi_{A_n}),(\chi_{B_1},\cdots,\chi_{B_n})\in{(\mathcal{F}^{1}}(X))^{n}$.
Thus, there exists an $\epsilon$-chain $\big\{(\chi_{A_1},\cdots,\chi_{A_n})=(u_{1}^{0},\cdots,u_{n}^{0}),(u_{1}^{1},\cdots,u_{n}^{1}),\cdots,(u_{1}^{m-1},\cdots,$
$u_{n}^{m-1}),$ $(u_{1}^{m},\cdots,u_{n}^{m})=(\chi_{B_1},\cdots,\chi_{B_n})\big\}$
of $(({\mathcal{F}^{1}}(X))^{n},\tilde{f}_{0,\infty}^{(n)})$ with length $m$, which implies that
\[d_{\infty}\big(\tilde{f}_{j}(u_{i}^{j}),u_{i}^{j+1}\big)
=\sup_{\alpha\in(0,1]}D_X\big(\bar{f}_{j}([u_{i}^{j}]_\alpha),[u_{i}^{j+1}]_{\alpha}\big)<\epsilon,
\;1\leq i\leq n,\;0\leq j\leq m-1.\]
Thus, for any $\alpha\in(0,1]$,
\[\big\{(A_1,\cdots,A_n),([u_{1}^{1}]_\alpha,\cdots,[u_{n}^{1}]_\alpha),\cdots,([u_{1}^{m-1}]_\alpha,\cdots,[u_{n}^{m-1}]_\alpha),
(B_1,\cdots,B_n)\big\}\]
is an $\epsilon$-chain of $((\mathcal{K}(X))^{n},\bar f_{0,\infty}^{(n)})$.
Hence, $\big((\mathcal{K}(X))^{n},\bar f_{0,\infty}^{(n)}\big)$ is chain transitive and
$(\mathcal{K}(X),\bar f_{0,\infty})$ is chain weakly mixing of order $n$.

Suppose that $(\mathcal{K}(X),\bar f_{0,\infty})$ is chain weakly mixing of order $n$.
Let $\epsilon>0$ and $(x_1,\cdots,x_n),$ $(y_1,\cdots,y_n)\in X^{n}$.
Then $(\{x_1\},\cdots,\{x_n\}),(\{y_1\},\cdots,\{y_n\})\in(\mathcal{K}(X))^n$.
Thus, there exists an $\epsilon$-chain $\{(\{x_1\},\cdots,\{x_n\})=(A_{1}^{0},\cdots,A_{n}^{0}),(A_{1}^{1},\cdots,A_{n}^{1}),\cdots,
(A_{1}^{m-1},\cdots,A_{n}^{m-1}),$ $(A_{1}^{m},\cdots,$ $A_{n}^{m})=(\{y_1\},\cdots,\{y_n\})\}$ of
$\big((\mathcal{K}(X))^{n},\bar f_{0,\infty}^{(n)}\big)$.
So, there exists $a_i^{j}\in A_i^j$, $1\leq i\leq n$, $1\leq j\leq m-1$, such that
\[\big\{(x_1,\cdots,x_n),(a_1^{1},\cdots,a_{n}^{1}),\cdots,(a_1^{m-1},\cdots,a_{n}^{m-1}),(y_1,\cdots,y_n)\big\}\]
is an $\epsilon$-chain of $(X^{n},f_{0,\infty}^{(n)})$.
Hence, $(X^{n},f_{0,\infty}^{(n)})$ is chain transitive and
$(X,f_{0,\infty})$ is chain weakly mixing of order $n$.
\end{proof}

The following result is a special case of $n=1$ of Theorem \ref{ctransitive}.

\begin{Corollary}\label{ctransitive1}
If $(\mathcal{F}^1(X),\tilde{f}_{0,\infty})$ is chain transitive, then so is $(\mathcal{K}(X),\bar f_{0,\infty})$.
If $(\mathcal{K}(X),\bar f_{0,\infty})$ is chain transitive, then so is $(X,f_{0,\infty})$.
\end{Corollary}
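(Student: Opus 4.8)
The plan is to obtain this corollary as the immediate specialization of Theorem \ref{ctransitive} to the case $n=1$, so the real work is just to unwind the definition of chain weak mixing at order one and recognize that it collapses to plain chain transitivity. First I would recall that, by the definitions in Section 3, a system $(Z,g_{0,\infty})$ is chain weakly mixing of order $n$ precisely when the $n$-fold product $(Z^{n},g_{0,\infty}^{(n)})$ is chain transitive. Setting $n=1$, the product construction degenerates: $Z^{1}=Z$ and $g_{0,\infty}^{(1)}=g_{0,\infty}$, so chain weak mixing of order $1$ is literally the same property as chain transitivity of $(Z,g_{0,\infty})$.

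Next I would apply this observation simultaneously to each of the three systems appearing in the statement. Under the canonical identifications $X^{1}=X$ with $f_{0,\infty}^{(1)}=f_{0,\infty}$, $(\mathcal{K}(X))^{1}=\mathcal{K}(X)$ with $\bar f_{0,\infty}^{(1)}=\bar f_{0,\infty}$, and $(\mathcal{F}^{1}(X))^{1}=\mathcal{F}^{1}(X)$ with $\tilde f_{0,\infty}^{(1)}=\tilde f_{0,\infty}$, the phrase ``chain weakly mixing of order $1$'' becomes exactly ``chain transitive'' for $(X,f_{0,\infty})$, $(\mathcal{K}(X),\bar f_{0,\infty})$, and $(\mathcal{F}^{1}(X),\tilde f_{0,\infty})$ in turn. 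Consequently, the two implications furnished by Theorem \ref{ctransitive} with $n=1$ read verbatim as the two implications asserted in the corollary.

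I do not expect any genuine obstacle here, since no new estimate or construction is needed beyond what Theorem \ref{ctransitive} already supplies; the only thing to check is the harmless identification of the single-factor product systems with the original systems, which is immediate from the definition of $f_{0,\infty}^{(n)}$ for $n=1$. Thus the entire argument amounts to invoking Theorem \ref{ctransitive} at $n=1$ and reading off both directions.
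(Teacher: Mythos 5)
Your proposal is correct and matches the paper exactly: the paper itself presents Corollary \ref{ctransitive1} as "a special case of $n=1$ of Theorem \ref{ctransitive}," which is precisely your argument. The identification of the one-fold product system with the original system is indeed immediate, so nothing further is required.
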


$u\in \mathcal{F}(X)$ is a piecewise constant fuzzy set if there exists a finite number of subsets $D_i\subset X$ such
that $\cup_{i}\bar{D}_i=X$ and $u|_{int(D_i)}$ is constant. A piecewise constant fuzzy set $u$ can be given by
a strictly decreasing sequence of closed subsets $\{C_1, C_2,\cdots,C_k\}\subset \mathcal{K}(X)$ and a
strictly increasing sequence $\{\alpha_1, \alpha_2,\cdots,\alpha_k\}\subset(0, 1]$ if
\begin{equation*}
[u]_{\alpha}=C_{i+1},\;\alpha\in(\alpha_i,\alpha_{i+1}].
\end{equation*}
It is known that (see, for example, \cite{Wu17}) for any two piecewise constant fuzzy sets $u$ and $v$,
there exist non-increasing sequences of closed subsets $\{D_1,\cdots,D_k\},\{E_1,\cdots,E_k\}\subset \mathcal{K}(X)$
and a strictly increasing sequence $\{\alpha_1, \alpha_2, \cdots, \alpha_k\}\subset(0, 1]$ such that
\begin{equation*}
[u]_\alpha=D_{i+1},\; [v]_{\alpha}=E_{i+1}, \;\alpha\in(\alpha_i,\alpha_{i+1}].
\end{equation*}
Note that the set $\mathcal{F}_{P}(X)$ of all the piecewise constant fuzzy sets is dense in $\mathcal{F}^{1}(X)$ \cite{Kupka1}.
Denote $\mathcal{K}_n(X)=\{K\in\mathcal{K}(X): |K|\leq n\}$, $n\in\mathbf{Z^{+}}$.
Then $\mathcal{K}_n(X)$ is a closed subset in $\mathcal{K}(X)$ and $\mathcal{K}_{F}(X):=\bigcup_{n=1}^{\infty}\mathcal{K}_n(X)$,
the collection of all finite subsets of $X$, is dense in $\mathcal{K}(X)$ \cite{Bauer}.

$\Lambda\subset X$ is said to be invariant under $f_{0,\infty}$ if $f_n(\Lambda)\subset\Lambda$ for all $n\in\mathbf{N}$,
then $(\Lambda,f_{0,\infty})$ is said to be an invariant subsystem of $(X,f_{0,\infty})$. It is easy to see that
$\mathcal{F}_{P}(X)$ and $\mathcal{K}_{F}(X)$ are invariant under $(\mathcal{K}(X),\bar f_{0,\infty})$
and $(\mathcal{F}^1(X),\tilde{f}_{0,\infty})$, respectively.
We need the following lemma.

\begin{lemma}\label{c1}
Let $\Lambda$ be a dense subset of $X$ and invariant under $f_{0,\infty}$. Assume that $f_{0,\infty}$ is equi-continuous.
Then $(X,f_{0,\infty})$ is chain mixing {\rm(}resp. chain transitive{\rm)} if and only if $(\Lambda,f_{0,\infty})$ is chain mixing
{\rm(}resp. chain transitive{\rm)}.
\end{lemma}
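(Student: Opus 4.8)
The plan is to use equi-continuity to manufacture a single modulus of continuity valid for the whole family $\{f_n\}$, and then to combine this with the density of $\Lambda$ so as to perturb pseudo-orbits into or out of $\Lambda$ without enlarging the jumps. Concretely, given $\epsilon>0$, equi-continuity supplies a $\delta\in(0,\epsilon/3)$ such that $d(a,b)<\delta$ forces $d(f_n(a),f_n(b))<\epsilon/3$ for \emph{every} $n\in\mathbf{N}$ simultaneously. This uniform $\delta$ is the crux: each link of a chain involves a different map $f_i$, so a perturbation must be small enough to be absorbed by all of them at once.

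For the implication $(\Lambda,f_{0,\infty})\Rightarrow(X,f_{0,\infty})$, I would fix $x,y\in X$ and use density to pick $x',y'\in\Lambda$ with $d(x,x')<\delta$ and $d(y,y')<\epsilon/3$. An $(\epsilon/3)$-chain from $x'$ to $y'$ inside $\Lambda$, say $x'=z_0,z_1,\dots,z_m=y'$, has all its interior points $z_1,\dots,z_{m-1}$ already lying in $\Lambda\subset X$, so I keep them and only replace the two endpoints, forming $x,z_1,\dots,z_{m-1},y$. The first link is controlled by $d(f_0(x),z_1)\le d(f_0(x),f_0(x'))+d(f_0(x'),z_1)<\epsilon/3+\epsilon/3$ (equi-continuity plus the chain bound), the last link by $d(f_{m-1}(z_{m-1}),y)\le d(f_{m-1}(z_{m-1}),y')+d(y',y)<\epsilon/3+\epsilon/3$, and the middle links are unchanged and below $\epsilon/3$; hence the result is an $\epsilon$-chain in $X$ from $x$ to $y$.

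The reverse implication $(X,f_{0,\infty})\Rightarrow(\Lambda,f_{0,\infty})$ is the direction that needs the most care, because the perturbed orbit must land in $\Lambda$ at every interior step while every link stays below $\epsilon$ --- this is exactly where the uniform $\delta$ is indispensable, since a perturbation tolerable for $f_0$ could be too coarse for some later $f_i$. Given $x,y\in\Lambda$, I would take an $(\epsilon/3)$-chain $x=v_0,v_1,\dots,v_m=y$ in $X$ (its endpoints are already in $\Lambda$) and, using density, replace each interior $v_i$ by some $v_i'\in\Lambda$ with $d(v_i,v_i')<\delta$. For each link, $d(f_i(v_i'),v_{i+1}')\le d(f_i(v_i'),f_i(v_i))+d(f_i(v_i),v_{i+1})+d(v_{i+1},v_{i+1}')<\epsilon/3+\epsilon/3+\epsilon/3$, where the input perturbation is absorbed by equi-continuity and the target perturbation by the triangle inequality (the two boundary links have one fewer term since $v_0,v_m$ are not moved). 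Thus the new chain lies in $\Lambda$ and is an $\epsilon$-chain from $x$ to $y$.

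Finally, the chain mixing case runs through the same two constructions verbatim: since each construction only relocates points and never inserts or deletes any, it preserves the length of the chain. Hence the threshold $N$ furnished by chain mixing of the source system (for the auxiliary endpoints $x',y'$ in one direction, for $x,y$ themselves in the other) produces, for every $n\ge N$, an $\epsilon$-chain of the same length $n$ in the target system, giving chain mixing with the identical $N$. The only point I would verify with care is the behaviour of very short chains --- e.g. length one, where in the first direction both endpoints are replaced at the same link, yielding $d(f_0(x),y)\le d(f_0(x),f_0(x'))+d(f_0(x'),y')+d(y',y)$ --- which is precisely why the error budget is split into thirds rather than halves.
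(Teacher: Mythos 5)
Your proposal is correct and takes essentially the same route as the paper's proof: equi-continuity yields a single $\delta$ valid for every $f_n$, density of $\Lambda$ is used to perturb interior points of a chain into $\Lambda$ in one direction and to replace the two endpoints in the other, and since no points are inserted or deleted the chain length is preserved, so chain mixing transfers with the same threshold $N$. Your uniform $\epsilon/3$ budget (with the explicit check of length-one chains) is a slightly more careful bookkeeping than the paper's $\epsilon/2$ split in the second direction, but it is not a different argument.
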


\begin{proof}
It only shows the proof of chain mixing, since chain transitivity can be proved similarly.
Suppose that $(X,f_{0,\infty})$ is chain mixing.
Fix $\epsilon>0$ and $x,y\in\Lambda$. Then there exists $N\in\mathbf{Z^{+}}$ such that for any $n\geq N$,
there exists an $\epsilon/3$-chain $\{x,z_{1},\cdots,z_{n-1},y\}$ of $(X,f_{0,\infty})$.
By the equi-continuity of $f_{0,\infty}$, there exists $0<\delta<\epsilon/3$ such that for any $w_1,w_2\in X$,
\[d(w_1,w_2)<\delta\Rightarrow d(f_k(w_1),f_k(w_2))<\epsilon/3,\; k\in\mathbf{Z^{+}}.\]
Since $\Lambda$ is dense in $X$, there exists $z'_{i}\in B_{d}(z_{i},\delta)$, $1\leq i\leq n-1$.
It is easy to verify that $\{x,z'_{1},\cdots,z'_{n-1},y\}$ is an $\epsilon$-chain of $(\Lambda,f_{0,\infty})$.
Hence, $(\Lambda,f_{0,\infty})$ is chain mixing.

Suppose that $(\Lambda,f_{0,\infty})$ is chain mixing.
Let $\epsilon>0$ and $x,y\in X$. By the equi-continuity of $f_{0,\infty}$,
there exists $0<\delta<\epsilon/2$ such that for any
$w_1,w_2\in X$,
\[d(w_1,w_2)<\delta\Rightarrow d(f_k(w_1),f_k(w_2))<\epsilon/2,\; k\in\mathbf{Z^{+}}.\]
Since $\Lambda$ is dense in $X$, there exist $x'\in B_{d}(x,\delta)\cap\Lambda$ and $y'\in B_{d}(y,\delta)\cap\Lambda$.
Thus, there exists $N\in\mathbf{Z^{+}}$ such that for any $n\geq N$, there exists an $\epsilon/2$-chain
$\{x',z_{1},\cdots,z_{n-1},y'\}$ of $(\Lambda,f_{0,\infty})$.
It is easy to verify that $\{x,z_{1},\cdots,z_{n-1},y\}$ is an $\epsilon$-chain of $(X,f_{0,\infty})$.
Hence, $(X,f_{0,\infty})$ is chain mixing.
\end{proof}

\begin{theorem}\label{ctransitive2}
Let $f_{0,\infty}$ be equi-continuous. Then $(\mathcal{K}(X),\bar f_{0,\infty})$ is chain weakly mixing of all orders
if and only if $(\mathcal{F}^1(X),\tilde{f}_{0,\infty})$ is so.
\end{theorem}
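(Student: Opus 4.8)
The plan is to treat the two implications separately; the forward implication is immediate and the reverse one carries all the content. Suppose first that $(\mathcal{F}^1(X),\tilde f_{0,\infty})$ is chain weakly mixing of all orders. Then for every $n\in\mathbf{Z^{+}}$ it is chain weakly mixing of order $n$, so the first half of Theorem \ref{ctransitive} gives that $(\mathcal{K}(X),\bar f_{0,\infty})$ is chain weakly mixing of order $n$; since $n$ is arbitrary, $(\mathcal{K}(X),\bar f_{0,\infty})$ is chain weakly mixing of all orders. No equi-continuity is needed here.

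For the converse, fix $n\in\mathbf{Z^{+}}$ and aim to show that $((\mathcal{F}^1(X))^{n},\tilde f_{0,\infty}^{(n)})$ is chain transitive. The first step is to reduce to piecewise constant fuzzy sets. Since $f_{0,\infty}$ is equi-continuous, a short computation with the Hausdorff metric shows $\tilde f_{0,\infty}$ is equi-continuous on $(\mathcal{F}^1(X),d_\infty)$, hence so is the product $\tilde f_{0,\infty}^{(n)}$ on $(\mathcal{F}^1(X))^{n}$ with the $\max$-metric; moreover $\mathcal{F}_{P}(X)^{n}$ is a dense invariant subset of $(\mathcal{F}^1(X))^{n}$. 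The density-and-equi-continuity argument in the proof of Lemma \ref{c1} (which uses only density, invariance and equi-continuity, never compactness of the ambient space) then shows that chain transitivity of $((\mathcal{F}^1(X))^{n},\tilde f_{0,\infty}^{(n)})$ is equivalent to chain transitivity of $(\mathcal{F}_{P}(X)^{n},\tilde f_{0,\infty}^{(n)})$, so it suffices to build $\epsilon$-chains between tuples of piecewise constant fuzzy sets.

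Fix $\epsilon>0$ and two tuples $(u^{(1)},\dots,u^{(n)}),(v^{(1)},\dots,v^{(n)})$ of (normal) piecewise constant fuzzy sets. By the common-refinement representation recalled just before the statement, I may choose one strictly increasing sequence of levels $0=\alpha_0<\alpha_1<\dots<\alpha_k=1$ together with non-increasing families of compact sets $D_1^{(p)}\supseteq\dots\supseteq D_k^{(p)}$ and $E_1^{(p)}\supseteq\dots\supseteq E_k^{(p)}$ so that $[u^{(p)}]_\alpha=D_i^{(p)}$ and $[v^{(p)}]_\alpha=E_i^{(p)}$ for $\alpha\in(\alpha_{i-1},\alpha_i]$ and all $p$. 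Collecting these level sets yields two points $(D_i^{(p)})_{p,i}$ and $(E_i^{(p)})_{p,i}$ of $\mathcal{K}(X)^{nk}$. Because $(\mathcal{K}(X),\bar f_{0,\infty})$ is chain weakly mixing of all orders, it is so of order $nk$, so $(\mathcal{K}(X)^{nk},\bar f_{0,\infty}^{(nk)})$ is chain transitive and there is an $\epsilon$-chain $\{(A_i^{(p),j})_{p,i}\}_{j=0}^{m}$ joining the two points, i.e. $D_X(\bar f_j(A_i^{(p),j}),A_i^{(p),j+1})<\epsilon$ for all $p,i,j$.

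The main obstacle is that these intermediate tuples need not be nested in $i$, so they do not directly assemble into fuzzy sets. I would repair this by passing to the upper unions $\widehat A_i^{(p),j}:=\bigcup_{s\ge i}A_s^{(p),j}$, which satisfy $\widehat A_1^{(p),j}\supseteq\dots\supseteq\widehat A_k^{(p),j}$ and therefore define normal piecewise constant fuzzy sets $w^{(p),j}\in\mathcal{F}_{P}(X)$ with $[w^{(p),j}]_\alpha=\widehat A_i^{(p),j}$ on $(\alpha_{i-1},\alpha_i]$. Two elementary facts make this compatible with the chain structure: $\bar f_j$ commutes with finite unions, and $D_X(\bigcup_s A_s,\bigcup_s B_s)\le\max_s D_X(A_s,B_s)$. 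Combined with the identity $[\tilde f_j(w)]_\alpha=\bar f_j([w]_\alpha)$ for $\alpha\in(0,1]$, they give $d_\infty(\tilde f_j(w^{(p),j}),w^{(p),j+1})=\max_i D_X(\bar f_j(\widehat A_i^{(p),j}),\widehat A_i^{(p),j+1})\le\max_s D_X(\bar f_j(A_s^{(p),j}),A_s^{(p),j+1})<\epsilon$. Finally, since the endpoint families $D_i^{(p)}$ and $E_i^{(p)}$ are already nested, the union operation leaves them fixed, whence $w^{(p),0}=u^{(p)}$ and $w^{(p),m}=v^{(p)}$. Thus $\{(w^{(1),j},\dots,w^{(n),j})\}_{j=0}^{m}$ is an $\epsilon$-chain of $(\mathcal{F}_{P}(X)^{n},\tilde f_{0,\infty}^{(n)})$ joining the two tuples, which establishes chain transitivity of the $\mathcal{F}_{P}$-subsystem and hence, via the reduction above, of $((\mathcal{F}^1(X))^{n},\tilde f_{0,\infty}^{(n)})$ for every $n$, completing the reverse implication.
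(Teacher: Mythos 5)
Your proof is correct and follows essentially the same route as the paper's: the forward direction via Theorem \ref{ctransitive}, and the converse by reducing to piecewise constant fuzzy sets through Lemma \ref{c1}, applying chain transitivity of $\big((\mathcal{K}(X))^{nk},\bar f_{0,\infty}^{(nk)}\big)$ to the common level-set representation, and repairing the non-nested intermediate tuples by upper unions $\bigcup_{s\geq i}A_s^{(p),j}$, which is exactly the paper's construction $[u_l^j]_\alpha=\cup_{r=i+1}^{k}E_l^{r,j}$. Your explicit justification of the estimate via $D_X\big(\bigcup_s A_s,\bigcup_s B_s\big)\leq\max_s D_X(A_s,B_s)$ and the observation that nested endpoints are fixed by the union operation merely spell out steps the paper leaves to the reader.
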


\begin{proof}
It is only to show the necessity since the sufficiency can be obtained by Theorem \ref{ctransitive}.
Suppose that $(\mathcal{K}(X),\bar f_{0,\infty})$ is chain weakly mixing of all orders.
Let $n\in\mathbf{Z^{+}}$, $\epsilon>0$ and $u=(u_1,\cdots,u_n),v=(v_1,\cdots,v_n)\in(\mathcal{F}_{P}(X))^{n}$.
Then there exist $0<\alpha_1<\cdots< \alpha_k=1$ and non-increasing sequences $\{A_{i}^j\}_{i=1}^{k},\{B_{i}^j\}_{i=1}^{k}\subset\mathcal{K}(X)$, $1\leq j\leq n$, such that
\[[u_j]_{\alpha}=A_{i+1}^j,\;[v_j]_{\alpha}=B_{i+1}^j,\;\alpha\in(\alpha_i,\alpha_{i+1}].\]
By the chain transitivity of $\big((\mathcal{K}(X))^{nk},\bar f_{0,\infty}^{(nk)}\big)$,
there exist an integer $m\geq2$ and an $\epsilon$-chain from $(A_{1}^1,\cdots,A_{k}^1,\cdots,A_{1}^n,\cdots,A_{k}^n)$
to $(B_{1}^1,\cdots,B_{k}^1,\cdots,B_{1}^n,\cdots,B_{k}^n)$ with length $m$.
Thus, for any $1\leq i\leq k$ and $1\leq j\leq n$, there exists an $\epsilon$-chain
$\{A_{i}^{j},E_{1}^{i,j},\cdots,E_{m-1}^{i,j},B_{i}^{j}\}$ of $(\mathcal{K}(X),\bar f_{0,\infty})$.
For any $1\leq j\leq n$ and $1\leq l\leq m-1$, define a piecewise constant fuzzy set
$u_{l}^{j}: X\to I$ by
\[[u_{l}^{j}]_{\alpha}=\cup_{r=i+1}^{k}E_{l}^{r,j},\;\alpha\in(\alpha_i,\alpha_{i+1}].\]
By the fact that $\{A_{i}^{j},E_{1}^{i,j},\cdots,E_{m-1}^{i,j},B_{i}^{j}\}$ is an $\epsilon$-chain of
$(\mathcal{K}(X),\bar f_{0,\infty})$ and $\{A_{i}^j\}_{i=1}^{k}$
is non-increasing, one can verify that $\{u_j,u_1^{j},\cdots,u_{m-1}^{j},v_j\}$ is an $\epsilon$-chain of $(\mathcal{F}_{P}(X),\tilde{f}_{0,\infty})$ for any $1\leq j\leq n$. Thus,
$\{u,(u_1^1,\cdots,u_1^n),\cdots,(u_{m-1}^1,\cdots,u_{m-1}^n),v\}$
is an $\epsilon$-chain of $\big((\mathcal{F}_{P}(X))^{n},\tilde{f}_{0,\infty}^{(n)}\big)$.
Hence, $\big((\mathcal{F}_{P}(X))^{n},\tilde{f}_{0,\infty}^{(n)}\big)$ is chain transitive, and so is $\big(({\mathcal{F}^{1}}(X))^{n},\tilde{f}_{0,\infty}^{(n)}\big)$ by Lemma \ref{c1}.
Therefore, $(\mathcal{F}^1(X),\tilde{f}_{0,\infty})$ is chain weakly mixing of order $n$.
Since $n$ is arbitrary, $(\mathcal{F}^1(X),\tilde{f}_{0,\infty})$ is chain weakly mixing of all orders.
\end{proof}

Chain mixing of $(M(X),\hat{f}_{0,\infty})$ does not imply that of $(X,f_{0,\infty})$, even for a single map,
see Example 3.7 in \cite{ShaoJDDE}. However, it is not the case for $(\mathcal{K}(X),\bar f_{0,\infty})$ and $(\mathcal{F}^1(X),\tilde{f}_{0,\infty})$.

\begin{theorem}\label{cmixing}
Let $f_{0,\infty}$ be equi-continuous.
Then $(X,f_{0,\infty})$ is chain mixing if and only if $(\mathcal{K}(X),\bar f_{0,\infty})$ is chain mixing
if and only if $(\mathcal{F}^1(X),\tilde{f}_{0,\infty})$ is chain mixing.
\end{theorem}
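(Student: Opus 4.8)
The plan is to prove the cyclic chain of implications $(X,f_{0,\infty})\Rightarrow(\mathcal{K}(X),\bar{f}_{0,\infty})\Rightarrow(\mathcal{F}^{1}(X),\tilde{f}_{0,\infty})\Rightarrow(X,f_{0,\infty})$ for chain mixing, which yields both equivalences at once. The passage between a system and an induced system will in each case be carried out on the dense invariant subsystems $\mathcal{K}_{F}(X)\subset\mathcal{K}(X)$ and $\mathcal{F}_{P}(X)\subset\mathcal{F}^{1}(X)$, after which Lemma \ref{c1} upgrades chain mixing to the whole space; this is legitimate because equi-continuity of $f_{0,\infty}$ is inherited by $\bar{f}_{0,\infty}$ and $\tilde{f}_{0,\infty}$, and the proof of Lemma \ref{c1} uses only equi-continuity and density, not compactness, so it applies on the noncompact spaces $\mathcal{K}(X)$ and $\mathcal{F}^{1}(X)$. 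The one feature that distinguishes this from Theorems \ref{ctransitive} and \ref{ctransitive2} is that chain mixing demands chains of every sufficiently large length; since each construction below splices together only finitely many auxiliary chains, I can take the maximum of their finitely many length-thresholds and obtain a chain of each length $n\ge N$.

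For $(X,f_{0,\infty})\Rightarrow(\mathcal{K}(X),\bar{f}_{0,\infty})$ I first prove $(\mathcal{K}_{F}(X),\bar{f}_{0,\infty})$ is chain mixing. Given finite sets $A=\{a_1,\dots,a_p\}$, $B=\{b_1,\dots,b_q\}$ and $\epsilon>0$, chain mixing of $(X,f_{0,\infty})$ provides, for every $n$ beyond a common threshold, $\epsilon$-chains $\{a_i=c^{i,j}_0,\dots,c^{i,j}_n=b_j\}$ of length $n$ for all $pq$ pairs $(i,j)$. Putting $C_k=\{c^{i,j}_k:1\le i\le p,\,1\le j\le q\}$ gives $C_0=A$ and $C_n=B$, and the elementary Hausdorff bound $D_X(\bigcup_r S_r,\bigcup_r T_r)\le\max_r D_X(S_r,T_r)$ together with $d(f_k(c^{i,j}_k),c^{i,j}_{k+1})<\epsilon$ yields $D_X(\bar{f}_k(C_k),C_{k+1})<\epsilon$; hence $\{C_0,\dots,C_n\}$ is an $\epsilon$-chain of length $n$ in $\mathcal{K}_{F}(X)$. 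Lemma \ref{c1} then gives chain mixing of $(\mathcal{K}(X),\bar{f}_{0,\infty})$.

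For $(\mathcal{K}(X),\bar{f}_{0,\infty})\Rightarrow(\mathcal{F}^{1}(X),\tilde{f}_{0,\infty})$ I prove $(\mathcal{F}_{P}(X),\tilde{f}_{0,\infty})$ is chain mixing by adapting the union construction of Theorem \ref{ctransitive2} so as to control length. Given $u,v\in\mathcal{F}_{P}(X)$, take their common level representation over cut levels $0<\alpha_1<\dots<\alpha_k=1$ with non-increasing sequences $\{D_i\},\{E_i\}\subset\mathcal{K}(X)$; chain mixing of $(\mathcal{K}(X),\bar{f}_{0,\infty})$ furnishes, for all $n$ past a common threshold, $\epsilon$-chains $\{D_r=E^r_0,\dots,E^r_n=E_r\}$ of length $n$ for each level $r$. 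Setting $[u_l]_\alpha=\bigcup_{r=i+1}^{k}E^r_l$ for $\alpha\in(\alpha_i,\alpha_{i+1}]$ keeps the level sets non-increasing, so each $u_l$ is a genuine piecewise constant fuzzy set with $u_0=u$ and $u_n=v$, and the same union bound applied levelwise gives $d_\infty(\tilde{f}_l(u_l),u_{l+1})=\sup_\alpha D_X(\bar{f}_l([u_l]_\alpha),[u_{l+1}]_\alpha)<\epsilon$; thus $\{u,u_1,\dots,u_{n-1},v\}$ is an $\epsilon$-chain of length $n$, and Lemma \ref{c1} lifts chain mixing to $(\mathcal{F}^{1}(X),\tilde{f}_{0,\infty})$. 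The closing implication $(\mathcal{F}^{1}(X),\tilde{f}_{0,\infty})\Rightarrow(X,f_{0,\infty})$ is direct: for $x,y\in X$ apply chain mixing to $\chi_x,\chi_y$, read the chain off at level $\alpha=1$ (using $[\chi_x]_1=\{x\}$, $[\chi_y]_1=\{y\}$ and $d_\infty\ge D_X$ at $\alpha=1$) to get an $\epsilon$-chain $\{\{x\}=A^0,\dots,A^n=\{y\}\}$ in $\mathcal{K}(X)$, and inductively pick $a^{j+1}\in A^{j+1}$ with $d(f_j(a^j),a^{j+1})<\epsilon$ starting from $a^0=x$, landing at $a^n\in A^n=\{y\}$, so that $\{x=a^0,\dots,a^n=y\}$ is an $\epsilon$-chain of length $n$ in $X$.

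The main obstacle is the middle implication. Two things must be arranged at once: all $k$ levelwise $\mathcal{K}$-chains must share one length $n$ available for every large $n$ — this is exactly where chain mixing rather than mere chain transitivity is needed, and it is secured by the common threshold $\max_{1\le r\le k}N_r$ — and the pieces must reassemble into honest fuzzy sets. The latter fails for the raw sets $E^r_l$, whose level sets need not be nested; only the cumulative unions $\bigcup_{r=i+1}^{k}E^r_l$ restore monotonicity, while the Hausdorff-union inequality simultaneously protects the $d_\infty$-chain estimate. The remaining points — that $\bar{f}_{0,\infty}$ and $\tilde{f}_{0,\infty}$ are equi-continuous and that $\mathcal{K}_{F}(X)$, $\mathcal{F}_{P}(X)$ are dense and invariant — are exactly the hypotheses already recorded before Lemma \ref{c1}, so the argument reduces to the two splicing constructions above.
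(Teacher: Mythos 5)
Your proposal is correct and follows essentially the same route as the paper: the implications $(X,f_{0,\infty})\Rightarrow(\mathcal{K}(X),\bar f_{0,\infty})\Rightarrow(\mathcal{F}^1(X),\tilde f_{0,\infty})$ are proved exactly as in the paper (splicing pointwise chains of a common length into finite-set chains, then levelwise $\mathcal{K}$-chains into piecewise constant fuzzy chains via the cumulative unions $\bigcup_{r=i+1}^{k}E^{r}_{l}$, followed by Lemma \ref{c1}). The only difference is organizational: you close a three-step cycle with a direct $(\mathcal{F}^1(X),\tilde f_{0,\infty})\Rightarrow(X,f_{0,\infty})$ step, whereas the paper factors this through $(\mathcal{K}(X),\bar f_{0,\infty})$ by citing the argument of Theorem \ref{ctransitive} --- the same reading-off-levels and point-selection ideas in two stages rather than one.
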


\begin{proof}
With a similar proof to that of Theorem \ref{ctransitive}, one can prove that chain mixing of
$(\mathcal{F}^1(X),\tilde{f}_{0,\infty})$ implies that of $(\mathcal{K}(X),\bar f_{0,\infty})$,
and chain mixing of $(\mathcal{K}(X),\bar f_{0,\infty})$ implies that of $(X,f_{0,\infty})$.

Suppose that $(X,f_{0,\infty})$ is chain mixing.
Let $\epsilon>0$ and $A=\{x_i\}_{i=1}^{s},B=\{y_j\}_{j=1}^{t}\in\mathcal{K}_{F}(X)$.
Then there exists $N\in\mathbf{Z^{+}}$ such that for any $n\geq N$, $1\leq i\leq s$ and $1\leq j\leq t$,
there exists an $\epsilon$-chain $\{x_i,z_{1}^{i,j},\cdots,z_{n-1}^{i,j},y_j\}$ of $(X,f_{0,\infty})$.
Denote
\[E_k:=\{z_{k}^{i,j}: 1\leq i\leq s,\;1\leq j\leq t\},\;\;1\leq k\leq n-1.\]
Then $\{A,E_1,\cdots,E_{n-1},B\}$ is an $\epsilon$-chain of $(\mathcal{K}_{F}(X),\bar f_{0,\infty})$.
So, $(\mathcal{K}_{F}(X),\bar f_{0,\infty})$ is chain mixing.
Thus, $(\mathcal{K}(X),\bar f_{0,\infty})$ is chain mixing by Lemma \ref{c1}.

Suppose that $(\mathcal{K}(X),\bar{f}_{0,\infty})$ is chain mixing.
Let $\epsilon>0$ and $u,v\in\mathcal{F}_{P}(X)$. Then there exist $0<\alpha_1<\cdots<\alpha_k=1$ and
non-increasing sequences $\{P_{i}\}_{i=1}^{k},\{Q_{i}\}_{i=1}^{k}\subset\mathcal{K}(X)$ such that
\[[u]_{\alpha}=P_{i+1},\;[v]_{\alpha}=Q_{i+1},\;\alpha\in(\alpha_i,\alpha_{i+1}].\]
Thus, there exists $N\in\mathbf{Z^{+}}$ such that for any $n\geq N$ and $1\leq i\leq k$,
there exists an $\epsilon$-chain $\{P_{i}=E_{0}^i,E_{1}^i,\cdots,E_{n-1}^i,E_{n}^i=Q_{i}\}$
of $(\mathcal{K}(X),\bar f_{0,\infty})$. So,
\begin{align}\label{c22}
D_X(\bar{f}_{j}(E_{j}^i),E_{j+1}^i)<\epsilon,\;0\leq j\leq n-1,\;1\leq i\leq k.
\end{align}
For any $1\leq j\leq n-1$, define a piecewise constant fuzzy set $u_j: X\to I$ by
\[[u_j]_{\alpha}=\cup_{l=i+1}^{k}E_j^{l},\;\alpha\in(\alpha_i,\alpha_{i+1}].\]
By (\ref{c22}) and the fact that $\{P_{i}\}_{i=1}^{k}$ and $\{Q_{i}\}_{i=1}^{k}$ are non-increasing,
$\{u,u_1,\cdots,u_{n-1},v\}$ is an $\epsilon$-chain of $(\mathcal{F}_{P}(X),\tilde{f}_{0,\infty})$.
Thus, $(\mathcal{F}_{P}(X),\tilde{f}_{0,\infty})$ is chain mixing.
Hence, $(\mathcal{F}^{1}(X),\tilde{f}_{0,\infty})$ is chain mixing by Lemma \ref{c1}.
Therefore, chain mixing among $(X,f_{0,\infty})$, $(\mathcal{K}(X),\bar f_{0,\infty})$
and $(\mathcal{F}^{1}(X),\tilde{f}_{0,\infty})$ are equivalent.
\end{proof}

\section{Shadowing Properties}

$(X,f_{0,\infty})$ is said to have shadowing if for any $\epsilon>0$, there exists $\delta>0$
such that every $\delta$-pseudo orbit $\{x_n\}_{n=0}^{\infty}$ is $\epsilon$-shadowed by some $y\in X$;
that is, $d(f_{0}^{n}(y),x_n)<\epsilon$ for any $n\geq0$; it is said to have finite shadowing
if for any $\epsilon>0$, there exists $\delta>0$ such that every finite $\delta$-pseudo orbit
is $\epsilon$-shadowed by some point in $X$; it is said to have $h$-shadowing if for any $\epsilon>0$,
there exists $\delta>0$ such that every finite $\delta$-pseudo orbit $\{x_k\}_{k=0}^{n}$ is $\epsilon$-$h$-shadowed
by some $z\in X$; that is, $d(f_{0}^{k}(z),x_k)<\epsilon$ for any $0\leq k\leq n-1$ and $f_{0}^{n}(z)=x_n$.
Since $X$ is compact, it is easy to verify that for $(X,f_{0,\infty})$,
\[\text{h-shadowing }\Rightarrow \text{finite shadowing}\Leftrightarrow \text{shadowing}.\]

\begin{lemma}\label{shadowingdense}
Let $\Lambda$ be a dense subset of $X$ and invariant under $f_{0,\infty}$. If $(\Lambda,f_{0,\infty})$ has
$h$-shadowing {\rm(}resp. finite shadowing{\rm)}, then $(X,f_{0,\infty})$ has $h$-shadowing {\rm(}resp. finite shadowing{\rm)}.
\end{lemma}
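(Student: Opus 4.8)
The plan is to transfer shadowing from the dense invariant set $\Lambda$ to all of $X$ by approximating every finite pseudo orbit of $(X,f_{0,\infty})$ by a nearby pseudo orbit lying entirely in $\Lambda$, shadowing the latter inside $\Lambda$, and then noting that the shadowing point already belongs to $X$. The feature that makes this work \emph{without} equi-continuity (which the statement pointedly does not assume) is that a finite pseudo orbit $\{x_k\}_{k=0}^{n}$ involves only the finitely many maps $f_0,\dots,f_{n-1}$, so ordinary continuity of each $f_k$ at the single point $x_k$ suffices to control the perturbation; there is no need to bound all $f_n$ simultaneously.

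First I would dispose of the finite-shadowing case. Given $\epsilon>0$, apply the finite shadowing of $(\Lambda,f_{0,\infty})$ with accuracy $\epsilon/2$ to get $\delta_0>0$, and take $\delta$ small relative to $\delta_0$. For a $\delta$-pseudo orbit $\{x_k\}_{k=0}^{n}$ in $X$, use density of $\Lambda$ to choose $x_k'\in\Lambda$ with $d(x_k',x_k)$ so small that $d(x_k',x_k)<\epsilon/2$ and, by continuity of $f_k$ at $x_k$, $d(f_k(x_k'),f_k(x_k))$ is tiny. The estimate $d(f_k(x_k'),x_{k+1}')\le d(f_k(x_k'),f_k(x_k))+d(f_k(x_k),x_{k+1})+d(x_{k+1},x_{k+1}')$ then shows $\{x_k'\}_{k=0}^{n}$ is a $\delta_0$-pseudo orbit in $\Lambda$. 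Finite shadowing of $\Lambda$ produces $z\in\Lambda\subset X$ with $d(f_0^k(z),x_k')<\epsilon/2$, hence $d(f_0^k(z),x_k)<\epsilon$ for all $k$. Since finite shadowing and shadowing coincide for $(X,f_{0,\infty})$ (as recorded just before the lemma), this also settles the plain shadowing assertion.

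The hard part is the $h$-shadowing case, where the shadowing point must satisfy $f_0^n(z)=x_n$ \emph{exactly}: perturbing the endpoint $x_n$ into $\Lambda$ would only yield $f_0^n(z)=x_n'$ with $x_n'$ near, but not equal to, $x_n$. To repair this I would keep the interior perturbations $x_0',\dots,x_{n-1}'\in\Lambda$ fixed but approximate the endpoint by a sequence $x_n^{(m)}\in\Lambda$ with $x_n^{(m)}\to x_n$. For large $m$ the orbit $\{x_0',\dots,x_{n-1}',x_n^{(m)}\}$ is still a $\delta_0$-pseudo orbit in $\Lambda$, so $h$-shadowing of $\Lambda$ with accuracy $\epsilon/3$ gives $z_m\in\Lambda$ with $d(f_0^k(z_m),x_k')<\epsilon/3$ for $0\le k\le n-1$ and $f_0^n(z_m)=x_n^{(m)}$. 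Compactness of $X$ then lets me pass to a convergent subsequence $z_{m_j}\to z\in X$; continuity of the composition $f_0^n$ forces $f_0^n(z)=\lim_j x_n^{(m_j)}=x_n$ exactly, while continuity of each $f_0^k$ together with $d(x_k',x_k)<\epsilon/3$ yields $d(f_0^k(z),x_k)\le \epsilon/3+\epsilon/3<\epsilon$ for $k\le n-1$. Hence $z$ is the required $\epsilon$-$h$-shadowing point and $(X,f_{0,\infty})$ has $h$-shadowing.

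I expect the endpoint matching—recovering the exact hit $f_0^n(z)=x_n$ from the approximate hits $f_0^n(z_m)=x_n^{(m)}$—to be the only genuinely delicate step; everything else is a routine density-plus-continuity perturbation. The resolution is precisely the compactness-and-continuity limit above, which trades the unavailable exact preimage of $x_n$ near $x_{n-1}$ for a cluster point of approximate shadowing orbits.
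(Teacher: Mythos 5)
Your proof is correct and takes essentially the same route as the paper: both approximate the pseudo orbit by points of $\Lambda$, apply $h$-shadowing inside $\Lambda$ to a sequence of such approximating pseudo orbits, extract a limit point $z$ of the shadowing points by compactness, and use continuity of $f_0^n$ to upgrade the approximate endpoint hits $f_0^n(z_m)=x_n^{(m)}$ to the exact equality $f_0^n(z)=x_n$. The only cosmetic difference is that the paper lets \emph{all} approximants $y_{i,n}\to x_i$ vary with $n$ while you fix the interior approximants and vary only the endpoint, a harmless streamlining of the same compactness-and-continuity argument.
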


\begin{proof}
It only shows the proof of $h$-shadowing, since finite shadowing can be proved similarly.
Let $\epsilon>0$. Since $(\Lambda,f_{0,\infty})$ has $h$-shadowing, there exists $\delta>0$ such that
every finite $\delta$-pseudo orbit of $(\Lambda,f_{0,\infty})$ is $\epsilon/4$-$h$-shadowed by some point in $\Lambda$.
Let $\{x_0,\cdots,x_r\}$ be a $\delta/3$-pseudo orbit in $X$. Then
\[d(f_{i}(x_i),x_{i+1})<\delta/3, \;\;0\leq i\leq r-1.\]
By the continuity of $f_{0,\infty}$, there exists $0<\zeta<\epsilon/4$ such that for any $x,y\in X$,
\[d(x,y)<\zeta\Rightarrow d(f_{i}(x),f_{i}(y))<\delta/3, \;\;0\leq i\leq r-1.\]
As $\Lambda$ is dense in $X$, there exists $y_{i,n}\in B_d(x_i,1/2^n)\cap\Lambda$ for any $0\leq i\leq r$ and $n\in\mathbf{Z^{+}}$.
Choose $N\in\mathbf{Z^{+}}$ such that $1/2^{n}<\min\{\delta/3,\zeta\}$ for any $n\geq N$.
Fix $n\geq N$. Then, for any $0\leq i\leq r-1$,
\begin{align*}
d(f_{i}(y_{i,n}),y_{i+1,n})\leq d(f_{i}(y_{i,n}),f_{i}(x_{i}))+d(f_{i}(x_{i}),x_{i+1})
+d(x_{i+1},y_{i+1,n})<\delta,
\end{align*}
which implies that $\{y_{i,n}\}_{i=0}^{r}$ is a $\delta$-pseudo orbit in $\Lambda$ for any $n\geq N$, and thus
there exists $z_n\in\Lambda$ such that
\[d(f_{0}^{i}(z_n),y_{i,n})<\epsilon/4,\;0\leq i\leq r-1,\;f_{0}^{r}(z_n)=y_{r,n}.\]
Let $z\in X$ be the limit point of $\{z_n\}_{n=N}^{+\infty}$. Then there exists $N_1\in\mathbf{Z^{+}}$ such that for any $n\geq N_1$, \[d(f_{0}^{i}(z),f_{0}^{i}(z_n))<\epsilon/2, \;\;0\leq i\leq r-1.\]
So
\begin{align*}
d(f_{0}^{i}(z),x_{i})\leq d(f_{0}^{i}(z),f_{0}^{i}(z_n))+d(f_{0}^{i}(z_n),y_{i,n})+d(y_{i,n},x_i)<\epsilon,\;
0\leq i\leq r-1,\;f_{0}^{r}(z)=x_{r}.
\end{align*}
Hence, $\{x_0,\cdots,x_r\}$ is $\epsilon$-$h$-shadowed by $z$.
Therefore, $(X,f_{0,\infty})$ has $h$-shadowing.
\end{proof}

\begin{theorem}\label{f1}
$(X,f_{0,\infty})$ has shadowing  if and only if $(\mathcal{K}(X),\bar f_{0,\infty})$ has shadowing  if and only if $(\mathcal{F}^{1}(X),\tilde{f}_{0,\infty})$ has finite shadowing.
\end{theorem}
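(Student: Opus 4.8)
The plan is to close a cycle of implications among the three properties, writing (A) for shadowing of $(X,f_{0,\infty})$, (B) for shadowing of $(\mathcal{K}(X),\bar f_{0,\infty})$, and (C) for finite shadowing of $(\mathcal{F}^{1}(X),\tilde f_{0,\infty})$. The two ``downward'' implications (B$\Rightarrow$A and C$\Rightarrow$B) are easy and rely on embedding $X$ and $\mathcal{K}(X)$ into the larger spaces via singletons and characteristic functions; the two ``upward'' implications (A$\Rightarrow$B and A$\Rightarrow$C) require manufacturing a shadowing object out of base-space shadowing points, and the genuinely new work is the fuzzy one.

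First I would prove A$\Rightarrow$B. Given $\epsilon>0$, take the $\delta$ furnished by shadowing on $X$ for $\epsilon/2$, and let $\{A_n\}_{n\ge0}$ be a $\delta$-pseudo orbit in $\mathcal{K}(X)$. Unwinding $D_X(\bar f_n(A_n),A_{n+1})<\delta$ yields two selection facts: every $a\in A_n$ admits $a'\in A_{n+1}$ with $d(f_n(a),a')<\delta$, and every $b\in A_{n+1}$ admits $a\in A_n$ with $d(f_n(a),b)<\delta$. Hence through each point of each $A_n$ one can thread a $\delta$-pseudo orbit $\{a_n\}$ of $(X,f_{0,\infty})$ with $a_n\in A_n$ for all $n$ (run the first fact forward, the second backward). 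For each threaded pseudo orbit $p$ fix one shadowing point $y(p)\in X$ and set $B:=\overline{\{y(p)\}}$. Each $y(p)$ obeys $d(f_0^n(y(p)),a_n)<\epsilon/2$ with $a_n\in A_n$, and every $w\in A_n$ is the $n$-th entry of some threaded $p$; combining these gives $D_X(\bar f_0^n(B),A_n)<\epsilon$ for all $n$, and $B$ is nonempty compact, so (B) holds. (This direction with its converse is also the Vasisht--Das equivalence.) For B$\Rightarrow$A, a pseudo orbit $\{x_n\}$ in $X$ becomes the singleton pseudo orbit $\{\{x_n\}\}$; if $A$ shadows it then any $y\in A$ satisfies $d(f_0^n(y),x_n)\le D_X(\bar f_0^n(A),\{x_n\})<\epsilon$.

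Next I would produce A$\Rightarrow$C. Fix $\epsilon>0$, let $\delta$ be as above, and let $u_0,\dots,u_r$ be a finite $\delta$-pseudo orbit in $\mathcal{F}^{1}(X)$. Since $d_\infty$ is the supremum over $\alpha\in(0,1]$ of $D_X([\cdot]_\alpha,[\cdot]_\alpha)$ and $[\tilde f_k(u)]_\alpha=\bar f_k([u]_\alpha)$, for each level $\alpha$ the sequence $[u_0]_\alpha,\dots,[u_r]_\alpha$ is a finite $\delta$-pseudo orbit in $\mathcal{K}(X)$. Repeating the threading construction at the finite horizon, but now fixing the shadowing point $y(p)$ once and for all as a function of the finite pseudo orbit $p\in X^{r+1}$ (independently of $\alpha$), I set $B_\alpha:=\overline{\{y(p): p\text{ stays in the level-}\alpha\text{ sets}\}}$. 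Because $[u_k]_\alpha$ decreases in $\alpha$, the sets $B_\alpha$ are non-increasing, and I pass to the left-continuous envelope $\widehat B_\alpha:=\bigcap_{\beta<\alpha}B_\beta$; then $w(x):=\sup\{\alpha\in(0,1]: x\in\widehat B_\alpha\}$ is a genuine normal upper semicontinuous fuzzy set with $[w]_\alpha=\widehat B_\alpha$. Both Hausdorff inequalities survive: the cover direction holds since a pseudo orbit threaded inside the level-$\alpha$ sets automatically stays inside the level-$\beta$ sets for every $\beta<\alpha$, so its shadowing point lands in $\widehat B_\alpha$; the sup direction holds since $[u_k]_\alpha=\bigcap_{\beta<\alpha}[u_k]_\beta$ (upper semicontinuity of $u_k$) together with the elementary fact that for nested compact sets $d(x,\bigcap_\beta C_\beta)=\lim_{\beta\uparrow\alpha}d(x,C_\beta)$. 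Thus $d_\infty(\tilde f_0^k(w),u_k)<\epsilon$ for $0\le k\le r$, which is (C). Finally C$\Rightarrow$B follows because $K\mapsto\chi_K$ is an isometric embedding (as $[\chi_K]_\alpha=K$ gives $d_\infty(\chi_K,\chi_L)=D_X(K,L)$) intertwining $\bar f_n$ and $\tilde f_n$: a finite $\delta$-pseudo orbit $\{K_k\}$ in $\mathcal{K}(X)$ lifts to $\{\chi_{K_k}\}$, and if $w$ finitely shadows it then its core $[w]_1$ gives $D_X(\bar f_0^k([w]_1),K_k)\le d_\infty(\tilde f_0^k(w),\chi_{K_k})<\epsilon$, so $(\mathcal{K}(X),\bar f_{0,\infty})$ has finite shadowing, which on the compact space $\mathcal{K}(X)$ coincides with shadowing.

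I expect the main obstacle to be the middle step: level-by-level shadowing in $\mathcal{K}(X)$ produces, a priori, compact sets $B_\alpha$ that need not be nested, whereas the level sets of a fuzzy set must be. The device that removes this is exactly the coherent (\(\alpha\)-independent) choice of shadowing point per pseudo orbit together with the left-continuous envelope $\widehat B_\alpha$, and the verification that both Hausdorff inequalities survive this regularization, via the nested-compact-sets distance lemma, is where the real care lies. The restriction to \emph{finite} shadowing for $\mathcal{F}^{1}(X)$ is forced: $(\mathcal{F}^{1}(X),d_\infty)$ is not compact, so the limiting argument that upgrades finite shadowing to shadowing is unavailable, whereas on a finite horizon the construction reduces cleanly to ordinary shadowing on the compact space $X$.
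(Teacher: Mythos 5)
Your proposal is correct, but it closes the equivalence along a different cycle and, on the decisive fuzzy step, by a genuinely different mechanism than the paper. The paper simply cites Vasisht--Das (Corollary 4.1 of \cite{Vasisht}) for the equivalence of shadowing on $X$ and on $\mathcal{K}(X)$ --- your threading argument is essentially a reproof of that citation, as you note --- derives finite shadowing of $X$ from that of $\mathcal{F}^{1}(X)$ by lifting a pseudo orbit to $\{\chi_{z_j}\}$ and taking any point of $[u]_1$, and proves the remaining implication $\mathcal{K}(X)\Rightarrow\mathcal{F}^{1}(X)$ only on the dense invariant family $\mathcal{F}_{P}(X)$ of piecewise constant fuzzy sets: a finite pseudo orbit there admits a common finite level structure $0<\alpha_1<\cdots<\alpha_k=1$, each of the $k$ level rows is shadowed in $\mathcal{K}(X)$ by some $G_i$, nestedness of level sets is restored by the union trick $[u]_{\alpha}=\bigcup_{n=i+1}^{k}G_n$ for $\alpha\in(\alpha_i,\alpha_{i+1}]$, and a separate density lemma (Lemma \ref{shadowingdense}) transfers finite shadowing from $\mathcal{F}_{P}(X)$ to $\mathcal{F}^{1}(X)$. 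You instead prove $X\Rightarrow\mathcal{F}^{1}(X)$ directly for arbitrary normal fuzzy sets, treating the continuum of levels simultaneously: nestedness comes not from unions but from the $\alpha$-independent choice of shadowing point per threaded pseudo orbit, and the possible failure of left-continuity is repaired by the envelope $\widehat{B}_{\alpha}=\bigcap_{\beta<\alpha}B_{\beta}$, the verification resting on $[u_k]_{\alpha}=\bigcap_{\beta<\alpha}[u_k]_{\beta}$ and the nested-compact distance lemma; I checked both Hausdorff inequalities in your construction and they are sound (the inclusion $B_{\alpha}\subset\widehat{B}_{\alpha}$ gives one direction, the distance lemma the other, with the $\epsilon/2$ margin absorbing the closures and limits). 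The paper's route is shorter and reuses machinery it needs anyway (the same density lemma serves Theorem \ref{hshadowing}), and its union trick is more elementary; your route is self-contained at the fuzzy step, bypasses piecewise constant approximation entirely, and --- since the coherent-choice/envelope construction uses finiteness of the horizon only through finite shadowing on $X$ --- would, run on infinite pseudo orbits, yield full shadowing of $(\mathcal{F}^{1}(X),\tilde{f}_{0,\infty})$, formally more than the theorem asserts. Your closing of the cycle through $\mathcal{K}(X)$ (via $[w]_1$, the isometry $K\mapsto\chi_K$, and the equivalence of finite shadowing with shadowing on the compact space $\mathcal{K}(X)$) is equally valid, just different from the paper's descent from $\mathcal{F}^{1}(X)$ to $X$.
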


\begin{proof}
By Corollary 4.1 in \cite{Vasisht} $(X,f_{0,\infty})$ has shadowing if and only if $(\mathcal{K}(X),\bar f_{0,\infty})$
has shadowing. Suppose that $(\mathcal{F}^{1}(X),\tilde{f}_{0,\infty})$ has finite shadowing. For any $\epsilon>0$,
there exists $\delta>0$ such that every finite $\delta$-pseudo orbit of $(\mathcal{F}^{1}(X),\tilde{f}_{0,\infty})$
is $\epsilon$-shadowed by some point in $\mathcal{F}^{1}(X)$. Let $\{z_j\}_{j=0}^{m}$ be a finite $\delta$-pseudo orbit of
$(X,f_{0,\infty})$. Then
\[d_{\infty}(\tilde{f}_{j}(\chi_{z_j}),\chi_{z_{j+1}})=d(f_{j}(z_j),z_{j+1})<\delta,\;0\leq j\leq m-1,\]
which implies that $\{\chi_{z_0},\cdots,\chi_{z_m}\}$ is a finite $\delta$-pseudo orbit of $(\mathcal{F}^{1}(X),\tilde{f}_{0,\infty})$.
Thus, there exists $u\in\mathcal{F}^{1}(X)$ such that
\[d_{\infty}(\tilde{f}_{0}^{j}(u),\chi_{z_j})<\epsilon,\;0\leq j\leq m.\]
Fix $y\in[u]_1$. Then
\[d(f_{0}^{j}(y),z_j)<\epsilon,\;0\leq j\leq m.\]
Therefore, $(X,f_{0,\infty})$ has finite shadowing, and thus has shadowing.

Suppose that $(\mathcal{K}(X),\bar{f}_{0,\infty})$ has shadowing. For any $\epsilon>0$,
there exists $\delta>0$ such that every $\delta$-pseudo orbit of $(\mathcal{K}(X),\bar{f}_{0,\infty})$
is $\epsilon$-shadowed by some point in $\mathcal{K}(X)$. Let $\{u_j\}_{j=0}^{m}\subset\mathcal{F}_{P}(X)$ be
a finite $\delta$-pseudo orbit under $\tilde{f}_{0,\infty}$. Then
\begin{align}\label{chain}
d_{\infty}(\tilde{f}_{j}(u_j),u_{j+1})<\delta,\;0\leq j\leq m-1.
\end{align}
Since $\{u_j\}_{j=0}^{m}$ are piecewise constants, there exist $0<\alpha_1<\cdots<\alpha_k=1$ and
non-increasing sequences $\{P^{j}_{i}\}_{i=1}^{k}\subset\mathcal{K}(X)$ such that
\[[u_j]_{\alpha}=P^{j}_{i+1},\;\alpha\in(\alpha_i,\alpha_{i+1}],\;0\leq j\leq m.\]
It follows from (\ref{chain}) that $\{P^{0}_{i},P^{1}_{i},\cdots,P^{m}_{i}\}$ is a $\delta$-pseudo orbit
of $(\mathcal{K}(X),\bar{f}_{0,\infty})$ for any $1\leq i\leq k$.
Thus, there exists $G_i\in\mathcal{K}(X)$ such that
\begin{align}\label{c2}
D_X(\bar{f}_{0}^{j}(G_i),P_{i}^{j})<\epsilon,\;0\leq j\leq m.
\end{align}
Define a piecewise constant fuzzy set $u: X\to I$ by
\[[u]_{\alpha}=\cup_{n=i+1}^{k}G_n,\;\alpha\in(\alpha_i,\alpha_{i+1}].\]
By (\ref{c2}) and the fact that
$\{P^{j}_{i}\}_{i=1}^{k}$ is non-increasing, we have
\[d_{\infty}(\tilde{f}_{0}^{j}(u),u_{j})<\epsilon,\;0\leq j\leq m.\]
Hence, $(\mathcal{F}_P(X),\tilde{f}_{0,\infty})$ has finite shadowing.
Therefore, $(\mathcal{F}^1(X),\tilde{f}_{0,\infty})$ has finite shadowing by Lemma \ref{shadowingdense}.
\end{proof}

\begin{remark}
Example 3.9 in \cite{ShaoJDDE} shows that shadowing of $(X, f_{0,\infty})$ is not inherited
by $(M(X), \hat f_{0,\infty})$ in general. This is another difference of dynamics between
$(\mathcal{K}(X),\bar f_{0,\infty})$ and that of $(M(X), \hat f_{0,\infty})$.
\end{remark}

\begin{theorem}\label{hshadowing}
$(X,f_{0,\infty})$ has $h$-shadowing  if and only if $(\mathcal{K}(X),\bar f_{0,\infty})$ has $h$-shadowing
if and only if $(\mathcal{F}^1(X),\tilde{f}_{0,\infty})$ has $h$-shadowing.
\end{theorem}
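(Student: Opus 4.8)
The plan is to prove the three-way equivalence for $h$-shadowing by establishing a cycle of implications, mirroring the structure already used for shadowing in Theorem \ref{f1}. First I would show that $h$-shadowing of $(\mathcal{F}^1(X),\tilde f_{0,\infty})$ implies $h$-shadowing of $(X,f_{0,\infty})$: given a finite $\delta$-pseudo orbit $\{z_j\}_{j=0}^{m}$ in $X$, I would lift it to the pseudo orbit $\{\chi_{z_j}\}_{j=0}^{m}$ in $\mathcal{F}^1(X)$ using the identity $d_{\infty}(\tilde f_j(\chi_{z_j}),\chi_{z_{j+1}})=d(f_j(z_j),z_{j+1})$, apply $\epsilon$-$h$-shadowing to obtain $u\in\mathcal{F}^1(X)$, and then extract a shadowing point $y\in[u]_1$. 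The crucial point here, not needed for ordinary shadowing, is the exactness requirement $f_0^m(z)=z_m$: since $\tilde f_0^m(u)$ coincides exactly with $\chi_{z_m}$, we get $[\tilde f_0^m(u)]_1=\overline{f_0^m([u]_1)}=\{z_m\}$, forcing $f_0^m(y)=z_m$ for the chosen $y\in[u]_1$.

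Next I would show that $h$-shadowing of $(X,f_{0,\infty})$ implies $h$-shadowing of $(\mathcal{K}(X),\bar f_{0,\infty})$. Given a finite $\delta$-pseudo orbit $\{A_j\}_{j=0}^{m}$ of compact sets, the natural approach is to shadow each individual pseudo orbit running through points of the $A_j$'s. The technical subtlety is that $D_X(\bar f_j(A_j),A_{j+1})<\delta$ controls the one-step discrepancy of the whole sets, so for each $a\in A_j$ I need a nearby point in $A_{j+1}$ and conversely; I would build, for each starting point $a\in A_0$ (and by a density/finite-net argument each point appearing along the way), a genuine $\delta$-pseudo orbit in $X$, $h$-shadow it by some $z$, and then assemble the shadowing compact set as $\{z : z \text{ shadows one of these orbits}\}$. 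The exactness condition at time $m$ transfers because each individual orbit hits $A_m$ exactly, and I must check that the assembled set is exactly $A_m$ (both inclusions), which is where the bidirectional Hausdorff estimate is used.

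For the third implication, $h$-shadowing of $(\mathcal{K}(X),\bar f_{0,\infty})$ implies $h$-shadowing of $(\mathcal{F}^1(X),\tilde f_{0,\infty})$, I would follow the level-set decomposition already exploited in Theorems \ref{ctransitive2}, \ref{cmixing}, and \ref{f1}. Working first on the dense invariant subsystem $\mathcal{F}_P(X)$ of piecewise constant fuzzy sets, I would write a finite $\delta$-pseudo orbit $\{u_j\}_{j=0}^{m}$ via non-increasing level sequences $\{P_i^j\}_{i=1}^{k}$, observe that each fixed level $\{P_i^j\}_{j=0}^{m}$ is a $\delta$-pseudo orbit in $\mathcal{K}(X)$, $h$-shadow each by $G_i\in\mathcal{K}(X)$ so that $\bar f_0^j(G_i)$ is $\epsilon$-close to $P_i^j$ for $j<m$ and exactly equal at $j=m$, and then reassemble a piecewise constant fuzzy set $u$ by $[u]_{\alpha}=\cup_{n\geq i+1}G_n$ on $(\alpha_i,\alpha_{i+1}]$. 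Finally I would pass from $\mathcal{F}_P(X)$ to $\mathcal{F}^1(X)$ via Lemma \ref{shadowingdense}, using that $\mathcal{F}_P(X)$ is dense and invariant.

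The main obstacle I anticipate is preserving the \emph{exact} endpoint condition throughout the set-valued and fuzzy reassembly steps. For ordinary shadowing one only needs $\epsilon$-closeness at every time, but $h$-shadowing demands $f_0^m(z)=x_m$ (resp.\ $\bar f_0^m(G)=A_m$, resp.\ $\tilde f_0^m(u)=u_m$) on the nose. In the $\mathcal{K}(X)$ step I must verify that the union of the individually shadowed points equals $A_m$ exactly rather than merely approximately, and in the fuzzy step I must confirm that the reassembled $u$ satisfies $[\tilde f_0^m(u)]_{\alpha}=[u_m]_{\alpha}$ for every level $\alpha$ simultaneously, which requires the non-increasing monotonicity of the level sequences to interact correctly with the union $\cup_{n\geq i+1}G_n$ and with the fact that $\bar f_0^m$ commutes with finite unions. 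Checking that the exactness survives these unions, and that Lemma \ref{shadowingdense} is genuinely stated for $h$-shadowing (it is), is the delicate part of the argument.
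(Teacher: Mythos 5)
Your proposal is correct and follows essentially the same route as the paper: the same cycle of implications, the same lifting via $\chi_{z_j}$ with extraction from $[u]_1$, the same per-point assembly of pseudo orbits (the paper chains backward to time $0$ and forward to time $n$ through \emph{each} point $a_i^j$ of \emph{each} $K_j$, which is exactly your ``each point appearing along the way''), and the same level-set reassembly on $\mathcal{F}_P(X)$ followed by Lemma \ref{shadowingdense}. The one refinement worth noting: for the step from $(X,f_{0,\infty})$ to $(\mathcal{K}(X),\bar f_{0,\infty})$ the paper does not attempt your finite-net argument on general compact sets (which by itself cannot yield the exact equality $\bar f_0^m(A)=A_m$), but instead proves $h$-shadowing on the dense invariant subsystem $\mathcal{K}_F(X)$ of finite sets and invokes Lemma \ref{shadowingdense}, whose limit-point argument is precisely what transfers the exact endpoint condition to arbitrary compact sets.
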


\begin{proof}
Suppose that $(\mathcal{K}(X),\bar f_{0,\infty})$ has $h$-shadowing.
Fix $\epsilon>0$. Then there exists $\delta>0$ such that every finite $\delta$-pseudo orbit of
$(\mathcal{K}(X),\bar f_{0,\infty})$ is $\epsilon$-$h$-shadowed by some point in $\mathcal{K}(X)$.
Let $\{x_j\}_{j=0}^{m}$ be a finite $\delta$-pseudo orbit of $(X,f_{0,\infty})$. Then
\[D_X(\bar{f}_{j}(\{x_j\}),\{x_{j+1}\})=d(f_{j}(x_j),x_{j+1})<\delta,\;0\leq j\leq m-1,\]
which implies that $\{\{x_0\},\cdots,\{x_m\}\}$ is a finite $\delta$-pseudo orbit of $(\mathcal{K}(X),\bar f_{0,\infty})$.
Thus there exists $K\in\mathcal{K}(X)$ such that
\[D_X(\bar{f}_{0}^{j}(K),\{x_j\})<\epsilon,\;0\leq j\leq m-1,\;\bar{f}_{0}^{m}(K)=\{x_m\},\]
which ensures that for any $x\in K$,
\[d(f_{0}^{j}(x),x_{j})<\epsilon,\;0\leq j\leq m-1,\;f_{0}^{m}(x)=x_m.\]
Therefore, $(X,f_{0,\infty})$ has $h$-shadowing.

Suppose that $(X,f_{0,\infty})$ has $h$-shadowing. It is to show that
$(\mathcal{K}_{F}(X),\bar{f}_{0,\infty})$ has $h$-shadowing.
Fix $\epsilon>0$. Then, there exists $\delta>0$ such that every finite $\delta$-pseudo orbit of $(X,f_{0,\infty})$ is
$\epsilon$-$h$-shadowed by some point in $X$. Let $\{K_0,\cdots,K_n\}$ be a finite $\delta$-pseudo orbit
of $(\mathcal{K}_{F}(X),\bar{f}_{0,\infty})$. Then
\begin{align}\label{hd}
D_X(\bar{f}_{i}(K_i),K_{i+1})<\delta,\;0\leq i\leq n-1.
\end{align}
Denote
\[K_{j}:=\{a_{0}^{j},a_{1}^{j},\cdots,a_{k_j}^{j}\},\;0\leq j\leq n.\]
For any $0\leq i\leq k_0$, there exists $0\leq m_{i,1}\leq k_1$ such that
$d(f_{0}(a_{i}^{0}),a_{m_{i,1}}^{1})<\delta$ by (\ref{hd}). Using (\ref{hd}) again, one has that there exists
$0\leq m_{i,2}\leq k_2$ such that $d(f_{1}(a_{m_{i,1}}^{1}),a_{m_{i,2}}^{2})<\delta$. Inductively,
there exists $0\leq m_{i,n}\leq k_n$ such that $d(f_{n-1}(a_{m_{i,n-1}}^{n-1}),a_{m_{i,n}}^{n})<\delta$
by (\ref{hd}). Thus, $\{a_{i}^{0},a_{m_{i,1}}^{1},a_{m_{i,2}}^{2},\cdots,a_{m_{i,n}}^{n}\}$
is a finite $\delta$-pseudo orbit of $(X,f_{0,\infty})$. Then there exists $b_{i}^{0}\in X$ such that
\begin{align}\label{6}
d(b_{i}^{0},a_{i}^{0})<\epsilon,\;d(f_{0}^{j}(b_{i}^{0}),a_{m_{i,j}}^{j})<\epsilon,\;1\leq j\leq n-1,\;
f_{0}^{n}(b_{i}^{0})=a_{m_{i,n}}^{n}.
\end{align}
For any $0\leq i\leq k_1$, there exists $0\leq l_{i,0}\leq k_0$ such that
$d(f_{0}(a_{l_{i,0}}^{0}),a_{i}^{1})<\delta$ by (\ref{hd}). Using (\ref{hd}) again, one has that there exists
$0\leq l_{i,2}\leq k_2$ such that $d(f_{1}(a_{i}^{1}),a_{l_{i,2}}^{2})<\delta$. Inductively,
there exists $0\leq l_{i,n}\leq k_n$ such that $d(f_{n-1}(a_{l_{i,n-1}}^{n-1}),a_{l_{i,n}}^{n})<\delta$
by (\ref{hd}). Then we obtain a $\delta$-pseudo orbit $\{a_{l_{i,0}}^{0},a_{i}^{1},a_{l_{i,2}}^{2},\cdots,a_{l_{i,n}}^{n}\}$
of $(X,f_{0,\infty})$. So, there exists $b_{i}^{1}\in X$ such that
\begin{align}\label{7}
d(b_{i}^{1},a_{l_{i,0}}^{0})<\epsilon,\;d(f_{0}(b_{i}^{1}),a_{i}^{1})<\epsilon,
\;d(f_{0}^{j}(b_{i}^{1}),a_{l_{i,j}}^{j})<\epsilon,\;2\leq j\leq n-1,\;f_{0}^{n}(b_{i}^{1})=a_{l_{i,n}}^{n}.
\end{align}
Repeating this process, for any $0\leq i\leq k_n$, we can obtain a finite $\delta$-pseudo orbit
$\{a_{t_{i,0}}^{0},a_{t_{i,1}}^{1},\cdots$ $,a_{t_{i,n-1}}^{n-1},a_{i}^{n}\}$ of $(X,f_{0,\infty})$
and there exists $b_{i}^{n}\in X$ such that
\begin{align}\label{8}
d(f_{0}^{j}(b_{i}^{n}),a_{t_{i,j}}^{j})<\epsilon,\;0\leq j\leq n-1,\;f_{0}^{n}(b_{i}^{n})=a_{i}^{n}.
\end{align}
Denote $A:=\{b_{0}^{0},\cdots,b_{k_0}^{0},b_{0}^{1},\cdots,b_{k_1}^{1},\cdots,b_{0}^{n},\cdots,b_{k_n}^{n}\}$.
So, $A\in\mathcal{K}_{F}(X)$. By (\ref{6})-(\ref{8}),
\[D_X(\bar{f}_{0}^{j}(A),K_{j})<\epsilon,\;0\leq j\leq n-1,\;\bar{f}_{0}^{n}(A)=K_{n}.\]
Thus, $(\mathcal{K}_{F}(X),\bar{f}_{0,\infty})$ has $h$-shadowing.
Hence, $(\mathcal{K}(X),\bar{f}_{0,\infty})$ has $h$-shadowing by Lemma \ref{shadowingdense}.

Using the same method in Theorem \ref{f1} one can prove that $h$-shadowing of
$(\mathcal{F}^1(X),\tilde{f}_{0,\infty})$ implies that of $(X,f_{0,\infty})$, and $h$-shadowing  of
$(\mathcal{K}(X),\bar f_{0,\infty})$ implies that of $(\mathcal{F}^1(X),\tilde{f}_{0,\infty})$.
Therefore, $h$-shadowing  among $(X,f_{0,\infty})$, $(\mathcal{K}(X),\bar f_{0,\infty})$
and $(\mathcal{F}^1(X),\tilde{f}_{0,\infty})$ are equivalent.
\end{proof}

The following result shows that shadowing plus chain mixing imply topological mixing for $(X,f_{0,\infty})$,
and thus its two induced systems are also topologically mixing.

\begin{theorem}
Let $(X,f_{0,\infty})$ have shadowing. If $(X,f_{0,\infty})$ is chain mixing, then
it is topologically mixing, and thus $(\mathcal{K}(X),\bar f_{0,\infty})$
and $(\mathcal{F}^1(X),\tilde{f}_{0,\infty})$ are topologically mixing.
\end{theorem}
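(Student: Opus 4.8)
The plan is to prove the chain of implications stated: shadowing together with chain mixing forces topological mixing of $(X,f_{0,\infty})$, and then invoke earlier results to pass mixing to the two induced systems. First I would fix nonempty open sets $U,V\subset X$ and a point $x\in U$, $y\in V$, and choose $\epsilon>0$ small enough that $B_d(x,\epsilon)\subset U$ and $B_d(y,\epsilon)\subset V$. The key idea is that shadowing lets us promote chain connections into genuine orbit connections. So I would apply the shadowing property to this $\epsilon$ to obtain a $\delta>0$ such that every $\delta$-pseudo orbit is $\epsilon$-shadowed by a real orbit.

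Next I would use chain mixing: for this $\delta$ and the chosen points $x,y$, there exists $N\in\mathbf{Z^{+}}$ so that for every $n\geq N$ there is a $\delta$-chain $\{x=z_0,z_1,\dots,z_n=y\}$ from $x$ to $y$ of length $n$. For each such $n$, I would extend this finite $\delta$-chain to an infinite $\delta$-pseudo orbit: I append to $y=z_n$ any forward pseudo-orbit continuation (for instance the genuine forward orbit $z_{n+k}=f_0^{k}(y)$ of $y$ starting at $z_n$, which is trivially a $\delta$-pseudo orbit since each step is exact). This yields an infinite $\delta$-pseudo orbit $\{z_0,z_1,z_2,\dots\}$ with $z_0=x$ and $z_n=y$. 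By shadowing there is a point $w\in X$ with $d(f_0^{k}(w),z_k)<\epsilon$ for all $k\geq0$. In particular $d(w,x)=d(f_0^0(w),z_0)<\epsilon$, so $w\in B_d(x,\epsilon)\subset U$, and $d(f_0^{n}(w),y)<\epsilon$, so $f_0^{n}(w)\in B_d(y,\epsilon)\subset V$. Hence $f_0^{n}(w)\in f_0^{n}(U)\cap V$, giving $n\in N(U,V)$. Since this holds for all $n\geq N$, we conclude $N(U,V)\supset[N,+\infty)\cap\mathbf{Z^{+}}$, which is exactly topological mixing of $(X,f_{0,\infty})$.

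Finally, once topological mixing of $(X,f_{0,\infty})$ is established, the conclusion for the induced systems is immediate from the results recalled in the introduction: by the equivalence proved in \cite{Shao21} (and \cite{Wu17} in the autonomous case), topological mixing of $(X,f_{0,\infty})$ is equivalent to topological mixing of $(\mathcal{K}(X),\bar f_{0,\infty})$ and of $(\mathcal{F}^{1}(X),\tilde f_{0,\infty})$, so both induced systems are topologically mixing.

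The main obstacle I anticipate is purely bookkeeping rather than conceptual: I must be careful that the same $\delta$ produced by shadowing is used uniformly for all lengths $n\geq N$, so that a single shadowing $\delta$ matches the $\delta$ supplied by chain mixing. Since chain mixing gives a threshold $N$ depending only on $\epsilon$ (through $\delta$) and on the pair $x,y$, and shadowing gives $\delta$ depending only on $\epsilon$, the order of quantifiers works out: fix $\epsilon$, get $\delta$ from shadowing, then feed $\delta$ into chain mixing to get $N$. The only subtlety is checking that the infinite extension of the finite $\delta$-chain remains a genuine $\delta$-pseudo orbit, which is trivial when one extends by an exact forward orbit. No equi-continuity hypothesis is needed for this theorem, and compactness of $X$ is not even required for this direction, though it underlies the ambient setting.
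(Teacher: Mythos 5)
Your proof is correct and follows essentially the same route as the paper: fix $\epsilon$-balls inside $U$ and $V$, obtain $\delta$ from shadowing, feed $\delta$ into chain mixing to get chains of every length $n\geq N$, shadow them to land a genuine orbit in $U$ hitting $V$ at time $n$, and then cite \cite{Shao21} for the induced systems. Your only deviation is extending the finite chain to an infinite pseudo-orbit via the exact forward orbit of $y$, whereas the paper implicitly invokes finite shadowing (equivalent to shadowing on compact spaces, as noted in Section 4); this is a cosmetic difference.
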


\begin{proof}
Let $U$ and $V$ be two nonempty open subsets of $X$.
Fix $x\in U$ and $y\in V$. Then there exists $\epsilon>0$ such that
$B_d(x,\epsilon)\subset U$ and $B_d(y,\epsilon)\subset V$.
Since $(X,f_{0,\infty})$ has shadowing, there exists $\delta>0$ such that every $\delta$-pseudo
orbit is $\epsilon$-shadowed by some point in $X$. By chain mixing of $(X,f_{0,\infty})$, there exists $N\in\mathbf{Z^{+}}$
such that for any $k\geq N$, there exists a $\delta$-chain $\{x=x_0, x_1,\cdots, x_k=y\}$. So, there exists
$z\in X$ such that $d(f_{0}^{n}(z), x_n)<\epsilon$ for all $0\leq n\leq k$.
In particular, $z\in B_d(x,\epsilon)\subset U$ and $f_0^k (z)\in B_d(y, \epsilon)\subset V$.
Hence, $(X,f_{0,\infty})$ is topologically mixing.
Therefore, $(\mathcal{K}(X),\bar f_{0,\infty})$ and $(\mathcal{F}^1(X),\tilde{f}_{0,\infty})$
are topologically mixing by Theorem 2.11 in \cite{Shao21}.
\end{proof}

\section{$\mathscr{F}$-sensitivity}

Let $\mathscr{P}$ be the collection of all subsets of $\mathbf{N}$.
$\mathscr{F}\subset\mathscr{P}$ is a Furstenberg family if it is hereditary upwards,
i.e. $F_{1}\subset F_{2}$ and $F_1\in\mathscr{F}$ imply $F_2\in\mathscr{F}$;
and is proper if it is a proper subset of $\mathscr{P}$.
All Furstenberg families considered here are proper.
It is clear that $\mathscr{F}$ is proper if and only if $\mathbf{N}\in\mathscr{F}$ and $\emptyset\notin\mathscr{F}$.
The dual family of $\mathscr{F}$ is
\[\kappa\mathscr{F}=\{F\in\mathscr{P}: \mathbf{N}\setminus F\notin\mathscr{F}\}.\]
It is easy to verify that $\mathscr{F}$ is a Furstenberg family if and only if $\kappa\mathscr{F}$ is so and
$\kappa(\kappa\mathscr{F})=\mathscr{F}$.
For two Furstenberg families $\mathscr{F}_1$ and $\mathscr{F}_2$, set
\[\mathscr{F}_1\cdot\mathscr{F}_2=\{F_1\cap F_2 : F_1\in\mathscr{F}_1, F_2\in\mathscr{F}_2\}.\]
A Furstenberg family $\mathscr{F}$ is said to be a filter if $\mathscr{F}\cdot\mathscr{F}\subset\mathscr{F}$.

Let $\mathscr{F}$ be a Furstenberg family. $(X,f_{0,\infty})$ is $\mathscr{F}$-sensitive if there exists $\delta>0$
such that $N_{d}(x,\epsilon,\delta)\in\mathscr{F}$ for any $x\in X$ and $\epsilon>0$, where
\[
N_{d}(x,\epsilon,\delta)=\{n\in\mathbf{Z^{+}}:{\rm there\; exists}\;
y\in B_{d}(x,\epsilon)\;{\rm such\; that}
\;d\big(f_{0}^{n}(x),f_{0}^{n}(y)\big)>\delta\};
\]
it is multi-$\mathscr{F}$-sensitive if there exists $\delta>0$ such that
$\bigcap_{i=1}^{k}N_{d}(x_i,\epsilon,\delta)\in\mathscr{F}$ for any $\epsilon>0$, $k\in\mathbf{Z^{+}}$ and $x_1,\cdots,x_k\in X$.
Here, $\delta$ is called a sensitivity constant.

\begin{lemma}\label{sen1}\cite{Shao21}
Let $K\in\mathcal{K}(X)$, $u\in\mathcal{F}^{1}(X)$ and $\epsilon,\delta>0$. Then

{\rm(i)} $N_{d_{\infty}}(\chi_K,\epsilon,\delta)\subset N_{D_X}(K,\epsilon,\delta)$.

{\rm(ii)} $N_{D_X}([u]_{1},\epsilon/4,\delta)\subset N_{d_{\infty}}(u,\epsilon,\delta)$.
\end{lemma}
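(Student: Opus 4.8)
The plan is to prove the two inclusions separately, with part (i) being essentially a bookkeeping argument and part (ii) requiring a genuine construction. Throughout I will use the standard level-set identity for Zadeh's extension, namely $[\tilde{f}_{0}^{n}(u)]_{\alpha}=\bar{f}_{0}^{n}([u]_{\alpha})$ for every $\alpha\in(0,1]$, together with the fact that $[\chi_{K}]_{\alpha}=K$ for all $\alpha\in(0,1]$, so that $\tilde{f}_{0}^{n}(\chi_{K})=\chi_{\bar{f}_{0}^{n}(K)}$.

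For part (i), first I would unwind the definition of $N_{d_{\infty}}(\chi_{K},\epsilon,\delta)$. If $n$ belongs to this set, there is some $v\in\mathcal{F}^{1}(X)$ with $d_{\infty}(\chi_{K},v)<\epsilon$ and $d_{\infty}(\tilde{f}_{0}^{n}(\chi_{K}),\tilde{f}_{0}^{n}(v))>\delta$. Writing both distances as suprema over $\alpha$ of the Hausdorff distances of the level sets and using $[\chi_{K}]_{\alpha}=K$, the second inequality supplies some $\alpha_{0}\in(0,1]$ with $D_{X}(\bar{f}_{0}^{n}(K),\bar{f}_{0}^{n}([v]_{\alpha_{0}}))>\delta$. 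Setting $B=[v]_{\alpha_{0}}\in\mathcal{K}(X)$, the first inequality yields $D_{X}(K,B)\leq d_{\infty}(\chi_{K},v)<\epsilon$. Thus $B$ witnesses $n\in N_{D_{X}}(K,\epsilon,\delta)$, which is the desired inclusion.

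For part (ii), the difficulty is to convert a witness $B\in\mathcal{K}(X)$ for $N_{D_{X}}([u]_{1},\epsilon/4,\delta)$ — so $D_{X}([u]_{1},B)<\epsilon/4$ and $D_{X}(\bar{f}_{0}^{n}([u]_{1}),\bar{f}_{0}^{n}(B))>\delta$ — into a fuzzy witness $v\in\mathcal{F}^{1}(X)$ for $N_{d_{\infty}}(u,\epsilon,\delta)$. The naive attempt $[v]_{\alpha}=[u]_{\alpha}\cup B$ fails, because upper semicontinuity would force $[v]_{1}=[u]_{1}\cup B$, and then $D_{X}(\bar{f}_{0}^{n}([u]_{1}),\bar{f}_{0}^{n}([v]_{1}))$ controls only the \emph{one-sided} Hausdorff distance and may not exceed $\delta$. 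Instead the plan is to build $v$ whose level-$1$ set is exactly $B$ by capping $u$ off $B$. Concretely, using that the level sets are nested compacta with $\bigcap_{\alpha<1}[u]_{\alpha}=[u]_{1}$, I would first choose $\alpha_{0}\in(0,1)$ so that $D_{X}([u]_{\alpha},[u]_{1})<\epsilon/4$ for all $\alpha\in[\alpha_{0},1]$ (Hausdorff continuity of a decreasing family of compact sets at its intersection). Then define $v(x)=1$ for $x\in B$ and $v(x)=\min\{u(x),\alpha_{0}\}$ for $x\notin B$. A direct computation of the superlevel sets gives $[v]_{\alpha}=B$ for $\alpha\in(\alpha_{0},1]$ and $[v]_{\alpha}=[u]_{\alpha}\cup B$ for $\alpha\in(0,\alpha_{0}]$; in particular every $[v]_{\alpha}$ is compact, so $v\in\mathcal{F}^{1}(X)$ with $[v]_{1}=B$.

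It then remains to estimate the two distances. For $d_{\infty}(u,v)$ I would bound $D_{X}([u]_{\alpha},B)\leq D_{X}([u]_{\alpha},[u]_{1})+D_{X}([u]_{1},B)<\epsilon/2$ on the range $(\alpha_{0},1]$, and $D_{X}([u]_{\alpha},[u]_{\alpha}\cup B)=\sup_{b\in B}d(b,[u]_{\alpha})\leq\sup_{b\in B}d(b,[u]_{\alpha_{0}})\leq D_{X}([u]_{\alpha_{0}},B)<\epsilon/2$ on $(0,\alpha_{0}]$, so that $d_{\infty}(u,v)<\epsilon$. For the image distance, evaluating at $\alpha=1$ gives $d_{\infty}(\tilde{f}_{0}^{n}(u),\tilde{f}_{0}^{n}(v))\geq D_{X}(\bar{f}_{0}^{n}([u]_{1}),\bar{f}_{0}^{n}(B))>\delta$. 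Hence $v$ witnesses $n\in N_{d_{\infty}}(u,\epsilon,\delta)$. The main obstacle is precisely this construction in part (ii): the symmetry of the Hausdorff metric rules out the one-sided union, and the capping device together with the continuity of $\alpha\mapsto[u]_{\alpha}$ near $\alpha=1$ is what both produces a legitimate normal fuzzy set with prescribed top level set $B$ and accounts for the factor $\epsilon/4$ in the hypothesis.
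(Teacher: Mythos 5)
Your proof is correct, and note that the paper itself gives no argument for this lemma --- it is quoted from \cite{Shao21} without proof, so there is nothing internal to compare against. Part (i) is the straightforward level-set bookkeeping via $[\tilde{f}_{0}^{n}(u)]_{\alpha}=\bar{f}_{0}^{n}([u]_{\alpha})$, and in part (ii) your capping construction (forcing $[v]_{1}=B$ exactly, rather than the one-sided union $[u]_{\alpha}\cup B$ at level $1$, and using Hausdorff continuity of $\alpha\mapsto[u]_{\alpha}$ at $\alpha=1$) is precisely the device that makes the symmetric Hausdorff lower bound at $\alpha=1$ work and explains the $\epsilon/4$ in the hypothesis; the only elided step is the triangle inequality $D_{X}([u]_{\alpha_{0}},B)\leq D_{X}([u]_{\alpha_{0}},[u]_{1})+D_{X}([u]_{1},B)<\epsilon/2$, which is immediate from your choice of $\alpha_{0}$.
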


\begin{lemma}\label{k1}
Let $\Lambda$ be a dense subset of $X$ and invariant under $f_{0,\infty}$.
If $(\Lambda,f_{0,\infty})$ is multi-$\mathscr{F}$-sensitive, then so is $(X,f_{0,\infty})$.
\end{lemma}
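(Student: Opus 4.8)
The plan is to push multi-$\mathscr{F}$-sensitivity from the dense invariant subset $\Lambda$ up to all of $X$ by comparing, for each center point of $X$, its sensitivity set $N_d$ with that of a nearby center chosen from $\Lambda$. Let $\delta>0$ be a sensitivity constant witnessing that $(\Lambda,f_{0,\infty})$ is multi-$\mathscr{F}$-sensitive; I claim $\delta/2$ serves as a sensitivity constant for $(X,f_{0,\infty})$. Fix $\epsilon>0$, $k\in\mathbf{Z^{+}}$ and $x_1,\dots,x_k\in X$. Using density of $\Lambda$, I would first choose $\lambda_i\in B_d(x_i,\epsilon/2)\cap\Lambda$ for each $1\leq i\leq k$.

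The core step is the inclusion
\[
N_d(\lambda_i,\epsilon/2,\delta)\subset N_d(x_i,\epsilon,\delta/2),\qquad 1\leq i\leq k,
\]
which I establish for each $n$ in the left-hand set by a case analysis on how far the orbit of $\lambda_i$ has drifted from that of $x_i$ by time $n$. If $d(f_{0}^{n}(x_i),f_{0}^{n}(\lambda_i))>\delta/2$, then $\lambda_i$ itself is a witness: since $\lambda_i\in B_d(x_i,\epsilon)$ and the two orbits separate by more than $\delta/2$, we get $n\in N_d(x_i,\epsilon,\delta/2)$. Otherwise, let $y_i\in B_d(\lambda_i,\epsilon/2)$ be the point witnessing $n\in N_d(\lambda_i,\epsilon/2,\delta)$, so $d(f_{0}^{n}(\lambda_i),f_{0}^{n}(y_i))>\delta$; then $y_i\in B_d(x_i,\epsilon)$ and, by the triangle inequality, $d(f_{0}^{n}(x_i),f_{0}^{n}(y_i))\geq d(f_{0}^{n}(\lambda_i),f_{0}^{n}(y_i))-d(f_{0}^{n}(x_i),f_{0}^{n}(\lambda_i))>\delta-\delta/2=\delta/2$, so again $n\in N_d(x_i,\epsilon,\delta/2)$.

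Granting this inclusion, intersecting over $i$ gives $\bigcap_{i=1}^{k}N_d(\lambda_i,\epsilon/2,\delta)\subset\bigcap_{i=1}^{k}N_d(x_i,\epsilon,\delta/2)$. Since $(\Lambda,f_{0,\infty})$ is multi-$\mathscr{F}$-sensitive with constant $\delta$ and the $\lambda_i$ all lie in $\Lambda$, the left-hand intersection belongs to $\mathscr{F}$; because $\mathscr{F}$ is hereditary upwards, the right-hand intersection belongs to $\mathscr{F}$ as well. As $\epsilon$, $k$ and $x_1,\dots,x_k$ were arbitrary, $(X,f_{0,\infty})$ is multi-$\mathscr{F}$-sensitive with sensitivity constant $\delta/2$.

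I expect the main obstacle to be precisely that no equi-continuity is assumed here, so a naive triangle-inequality comparison of the orbits of $\lambda_i$ and $x_i$ can fail because those orbits may diverge over time. The case split above is designed to absorb exactly this difficulty: when the orbits have already drifted apart by more than $\delta/2$, that drift itself supplies the required separation with witness $\lambda_i$, while when they have not, the drift is small enough for the witness $y_i$ inherited from $\Lambda$ to survive the perturbation of the center. A minor point to check is that the witnesses produced for $(\Lambda,f_{0,\infty})$, which a priori lie in $\Lambda$, are legitimate witnesses in $X$; this is immediate since $\Lambda\subset X$ and $B_d(\lambda_i,\epsilon/2)\subset B_d(x_i,\epsilon)$.
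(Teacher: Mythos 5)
Your proposal is correct and follows essentially the same route as the paper's proof: approximate each center $x_i$ by a point of $\Lambda$ with the radius shrunk so that witnesses remain in $B_d(x_i,\epsilon)$, then use the triangle-inequality disjunction (either the $\Lambda$-center's orbit or the witness's orbit separates from that of $x_i$ by more than $\delta/2$) and upward heredity of $\mathscr{F}$ to conclude with sensitivity constant $\delta/2$. Your explicit choice of radius $\epsilon/2$ merely replaces the paper's auxiliary $\epsilon_0$ with $B_d(y_i,\epsilon_0)\cap\Lambda\subset B_d(x_i,\epsilon)\cap\Lambda$, and your case split is the same disjunction phrased pointwise.
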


\begin{proof}
Let $(\Lambda,f_{0,\infty})$ be multi-$\mathscr{F}$-sensitive with sensitivity constant $\delta>0$.
Fix $k\in\mathbf{Z^{+}}$, $x_1,\cdots,x_k\in X$ and $\epsilon>0$.
Since $\Lambda$ is dense in $X$, there exists $y_i\in B_d(x_i,\epsilon)\cap\Lambda$, $1\leq i\leq k$.
Then there exists $\epsilon_0>0$ such that $B_d(y_i,\epsilon_0)\cap\Lambda\subset B_d(x_i,\epsilon)\cap\Lambda$,
$1\leq i\leq k$. By the multi-$\mathscr{F}$-sensitivity of $(\Lambda,f_{0,\infty})$,
\[\Omega:=\bigcap_{i=1}^{k}\{n\in\mathbf{Z^{+}}:{\rm there\; exists}\;
z_i\in B_{d}(y_i,\epsilon_0)\cap\Lambda\;{\rm such\; that}
\;d\big(f_{0}^{n}(y_i),f_{0}^{n}(z_i)\big)>\delta\}\in\mathscr{F}.\]
Let $N\in\Omega$. Then for any $1\leq i\leq k$, there exists $z_i\in B_d(y_i,\epsilon_0)\cap\Lambda$ such that
\[d(f_{0}^{N}(y_i), f_{0}^{N}(z_i))>\delta,\]
which implies that
\[d(f_{0}^{N}(x_i), f_{0}^{N}(y_i))>\delta/2\; {\rm or}\; d(f_{0}^{N}(x_i), f_{0}^{N}(z_i))>\delta/2.\]
Thus, $N\in\bigcap_{i=1}^{k}N_{d}(x_i,\epsilon,\delta/2)$ and
$\Omega\subset\bigcap_{i=1}^{k}N_{d}(x_i,\epsilon,\delta/2)$.
Hence, $\bigcap_{i=1}^{k}N_{d}(x_i,\epsilon,\delta/2)\in\mathscr{F}$ and
$(X,f_{0,\infty})$ is multi-$\mathscr{F}$-sensitive.
\end{proof}

\begin{theorem}\label{sensitive}
$(X,f_{0,\infty})$ is multi-$\mathscr{F}$-sensitive if and only if $(\mathcal{K}(X),\bar f_{0,\infty})$
is multi-$\mathscr{F}$-sensitive if and only if $(\mathcal{F}^1(X),\tilde{f}_{0,\infty})$ is multi-$\mathscr{F}$-sensitive.
\end{theorem}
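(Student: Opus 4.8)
The strategy is to route everything through the set-valued system $(\mathcal{K}(X),\bar f_{0,\infty})$, proving three links: its equivalence with $(\mathcal{F}^1(X),\tilde f_{0,\infty})$, the downward implication to $(X,f_{0,\infty})$, and the upward implication from $(X,f_{0,\infty})$, the last being the only substantial step. The equivalence with the normal fuzzification is immediate from Lemma~\ref{sen1} together with the fact that a Furstenberg family is hereditary upwards. Indeed, if $(\mathcal{F}^1(X),\tilde f_{0,\infty})$ is multi-$\mathscr{F}$-sensitive with constant $\delta$, then for $K_1,\dots,K_k\in\mathcal{K}(X)$ I test the definition at $\chi_{K_1},\dots,\chi_{K_k}$ and use $N_{d_\infty}(\chi_{K_i},\epsilon,\delta)\subset N_{D_X}(K_i,\epsilon,\delta)$ from Lemma~\ref{sen1}(i) to enlarge the resulting member of $\mathscr{F}$ to $\bigcap_i N_{D_X}(K_i,\epsilon,\delta)$; the converse is the mirror image, testing multi-sensitivity of $\mathcal{K}(X)$ at the cuts $[u_i]_1$ with radius $\epsilon/4$ and applying Lemma~\ref{sen1}(ii).

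The downward implication $(\mathcal{K}(X),\bar f_{0,\infty})\Rightarrow(X,f_{0,\infty})$ is equally short: for $x_1,\dots,x_k\in X$ I test multi-sensitivity of $\mathcal{K}(X)$ at the singletons $\{x_i\}$. Since $D_X(\{f_0^n x\},\bar f_0^n(L))=\sup_{z\in L}d(f_0^n x,f_0^n z)$ for every $L\subset B_d(x,\epsilon)$, a set $L$ witnessing $D_X(\bar f_0^n(\{x\}),\bar f_0^n(L))>\delta$ contains a point $z\in B_d(x,\epsilon)$ with $d(f_0^n x,f_0^n z)>\delta$, so $N_{D_X}(\{x\},\epsilon,\delta)\subset N_d(x,\epsilon,\delta)$ and $\bigcap_i N_{D_X}(\{x_i\},\epsilon,\delta)\in\mathscr{F}$ forces $\bigcap_i N_d(x_i,\epsilon,\delta)\in\mathscr{F}$. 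The core of the theorem is the reverse implication $(X,f_{0,\infty})\Rightarrow(\mathcal{K}(X),\bar f_{0,\infty})$. Following the pattern of Theorems~\ref{cmixing} and \ref{f1}, I would first establish multi-$\mathscr{F}$-sensitivity on the dense invariant subsystem $(\mathcal{K}_F(X),\bar f_{0,\infty})$ and then invoke Lemma~\ref{k1} to lift it to all of $(\mathcal{K}(X),\bar f_{0,\infty})$. Thus, with $\delta$ a sensitivity constant for $(X,f_{0,\infty})$, I fix finite sets $K_1,\dots,K_k\in\mathcal{K}_F(X)$ and $\epsilon>0$, list all their points, and let $\Omega$ be the intersection of the sets $N_d(x,\epsilon,\delta)$ over these finitely many points; multi-$\mathscr{F}$-sensitivity of $(X,f_{0,\infty})$ gives $\Omega\in\mathscr{F}$. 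For each $n\in\Omega$ every listed point $x$ admits a witness $y\in B_d(x,\epsilon)$ with $d(f_0^n x,f_0^n y)>\delta$.

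The main obstacle is manufacturing, for each $i$ and each such $n$, a compact set $L_i$ with $D_X(K_i,L_i)<\epsilon$ whose $n$-th image stays far from $\bar f_0^n(K_i)$ in the Hausdorff metric: the separation of each point from its own witness need not survive, since a witness image may drift close to the image of a \emph{different} point of $K_i$, and simple single-point perturbations genuinely fail (two points can swap neighbourhoods). I would resolve this by vacating a whole ball rather than moving one point. Fix any $a\in\bar f_0^n(K_i)$, set $C=\{p\in\bar f_0^n(K_i):d(a,p)<\delta/2\}$, and let $L_i$ be obtained from $K_i$ by replacing every point $x$ with $f_0^n(x)\in C$ by its witness $y$ and retaining all other points. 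Then $D_X(K_i,L_i)<\epsilon$; for a replaced point, $d(a,f_0^n y)\ge d(f_0^n x,f_0^n y)-d(a,f_0^n x)>\delta-\delta/2=\delta/2$, while for a retained point $d(a,f_0^n x)\ge\delta/2$ by the definition of $C$. Hence $a$ lies at distance at least $\delta/2$ from every point of $\bar f_0^n(L_i)$, so $D_X(\bar f_0^n(K_i),\bar f_0^n(L_i))\ge\delta/2$ and $n\in N_{D_X}(K_i,\epsilon,\delta/3)$.

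The decisive feature is that the constant $\delta/3$ depends only on $\delta$, not on $\vert K_i\vert$ or $k$, so it is a genuine uniform sensitivity constant; consequently $\Omega\subset\bigcap_i N_{D_X}(K_i,\epsilon,\delta/3)\in\mathscr{F}$, which shows $(\mathcal{K}_F(X),\bar f_{0,\infty})$ is multi-$\mathscr{F}$-sensitive. Lemma~\ref{k1} then yields multi-$\mathscr{F}$-sensitivity of $(\mathcal{K}(X),\bar f_{0,\infty})$, closing the cycle $(X,f_{0,\infty})\Rightarrow(\mathcal{K}(X),\bar f_{0,\infty})\Leftrightarrow(\mathcal{F}^1(X),\tilde f_{0,\infty})\Rightarrow(X,f_{0,\infty})$. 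I expect the ball-vacating construction to be the only delicate point; everything else reduces to Lemma~\ref{sen1}, Lemma~\ref{k1}, and upward heredity of $\mathscr{F}$, and no equi-continuity hypothesis is needed at any stage.
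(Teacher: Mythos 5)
Your proposal is correct and follows essentially the same route as the paper's proof: the paper likewise reduces to $(\mathcal{K}_F(X),\bar f_{0,\infty})$ and lifts via Lemma \ref{k1}, uses Lemma \ref{sen1} (plus $N_{D_X}(\{x\},\epsilon,\delta)\subset N_d(x,\epsilon,\delta)$) for the fuzzy and downward links, and your ball-vacating construction is exactly the paper's replacement set $B_j$, which swaps in the witness $y_i^j$ precisely when $d(f_0^n(x_1^j),f_0^n(x_i^j))\leq\delta/2$, i.e.\ vacates the $\delta/2$-ball centered at $f_0^n(x_1^j)$ (your arbitrary choice of center $a$ and the constant $\delta/3$ versus the paper's $\delta/2$ are immaterial variations).
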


\begin{proof}
Suppose that $(X,f_{0,\infty})$ is multi-$\mathscr{F}$-sensitive with sensitivity constant $\delta>0$.
To prove that $(\mathcal{K}(X),\bar f_{0,\infty})$ is multi-$\mathscr{F}$-sensitive, it suffices to prove
$(\mathcal{K}_{F}(X),\bar f_{0,\infty})$ is multi-$\mathscr{F}$-sensitive by Lemma \ref{k1}.
Fix $k\in\mathbf{Z^{+}}$, $A_1=\{x_{1}^{1},\cdots,x_{m_1}^{1}\},\cdots,A_k=\{x_{1}^{k},\cdots,x_{m_k}^{k}\}\in \mathcal{K}(X)$ and $\epsilon>0$. By the multi-$\mathscr{F}$-sensitivity of $(X,f_{0,\infty})$,
\[\Gamma:=\bigcap_{j=1}^{k}\bigcap_{i=1}^{m_j}N_{d}(x_{i}^{j},\epsilon,\delta)\in\mathscr{F}.\]
Let $n\in\Gamma$. Then for any $1\leq j\leq k$ and $1\leq i\leq m_j$, there exists $y_{i}^{j}\in B_d(x_{i}^{j},\epsilon)$ such that
\[d(f_{0}^{n}(x_{i}^{j}),f_{0}^{n}(y_{i}^{j}))>\delta.\]
For any $1\leq j\leq k$, denote $B_j:=\{z_{i}^{j}\}_{i=1}^{m_j}$, where
\begin{equation*}
		z_{i}^{j}=\left\{\begin{array}{ll}
		y_{i}^{j},& {\rm if}\; d(f_{0}^{n}(x_{1}^{j}),f_{0}^{n}(x_{i}^{j}))\leq\delta/2,\\
        x_{i}^{j}, & {\rm otherwise}.
		\end{array}\right.
\end{equation*}
Then
\[D_X(f_{0}^{n}(A_j),f_{0}^{n}(B_j))\geq d(f_{0}^{n}(x_1^j),f_{0}^{n}(B_j))>\delta/2,\;1\leq j\leq k.\]
Note that $B_j\in B_{D_X}(A_j, \epsilon)$.
Thus, $n\in\bigcap_{j=1}^{k}N_{D_X}(A_{j},\epsilon,\delta/2)$
and $\Gamma\subset\bigcap_{j=1}^{k}N_{D_X}(A_{j},\epsilon,\delta/2)$.
which implies that $\bigcap_{j=1}^{k}N_{D_X}(A_{j},\epsilon,\delta/2)\in\mathscr{F}$.
Hence, $(\mathcal{K}_{F}(X),\bar f_{0,\infty})$ is multi-$\mathscr{F}$-sensitive,
and therefore, $(\mathcal{K}(X),\bar f_{0,\infty})$ is multi-$\mathscr{F}$-sensitive.

Suppose that $(\mathcal{K}(X),\bar f_{0,\infty})$ is multi-$\mathscr{F}$-sensitive with sensitivity constant $\delta>0$.
Let $k\in\mathbf{Z^{+}}$, $u_1,\cdots,u_k\in\mathcal{F}^1(X)$ and $\epsilon>0$.
By Lemma \ref{sen1} (ii),
\[\bigcap_{j=1}^{k}N_{D_X}([u_{j}]_{1},\epsilon/4,\delta)\subset\bigcap_{j=1}^{k}N_{d_{\infty}}(u_{j},\epsilon,\delta).\]
Thus, $\bigcap_{j=1}^{k}N_{d_{\infty}}(u_{j},\epsilon,\delta)\in\mathscr{F}$ since $\bigcap_{j=1}^{k}N_{D_X}([u_{j}]_{1},\epsilon/4,\delta)\in\mathscr{F}$.
Hence, $(\mathcal{F}^1(X),\tilde{f}_{0,\infty})$ is multi-$\mathscr{F}$-sensitive.

Suppose that $(\mathcal{F}^1(X),\tilde{f}_{0,\infty})$ is multi-$\mathscr{F}$-sensitive with sensitivity constant $\delta>0$.
Let $k\in\mathbf{Z^{+}}$, $x_1,\cdots,x_k\in X$ and $\epsilon>0$. By Lemma \ref{sen1} (i),
\[\bigcap_{j=1}^{k}N_{d_{\infty}}(\chi_{x_i},\epsilon,\delta)\subset\bigcap_{j=1}^{k}N_{D_X}(\{x_i\},\epsilon,\delta)
\subset\bigcap_{j=1}^{k}N_{d}(x_i,\epsilon,\delta).\]
Thus, $\bigcap_{j=1}^{k}N_{d}(x_i,\epsilon,\delta)\in\mathscr{F}$
since $\bigcap_{j=1}^{k}N_{d_{\infty}}(\chi_{x_i},\epsilon,\delta)\in\mathscr{F}$.
So, $(X,f_{0,\infty})$ is multi-$\mathscr{F}$-sensitive.
Therefore,  multi-$\mathscr{F}$-sensitivity among $(X,f_{0,\infty})$, $(\mathcal{K}(X),\bar f_{0,\infty})$ and $(\mathcal{F}^1(X),\tilde{f}_{0,\infty})$ are equivalent.
\end{proof}

\begin{remark}
{\rm(i)} The idea in the proof of Theorem \ref{sensitive} is partly motivated by Theorem 4 in \cite{Wu15}.

{\rm(ii)} $\mathscr{F}$-sensitivity among $(X,f_{0,\infty})$, $(\mathcal{K}(X),\bar f_{0,\infty})$
and $(\mathcal{F}^1(X),\tilde{f}_{0,\infty})$ are not totally equivalent.
Theorem 3.3 in \cite{Shao21} tells us that $(\mathcal{K}(X), \bar{f}_{0,\infty})$ is $\mathscr{F}$-sensitive if and only if
$(\mathcal{F}^{1}(X),$ $ \tilde f_{0,\infty})$ is $\mathscr{F}$-sensitive.
It is easy to verify that $\mathscr{F}$-sensitivity of $(\mathcal{K}(X), \bar{f}_{0,\infty})$
{\rm(}resp. $(\mathcal{F}^{1}(X),$ $ \tilde f_{0,\infty})${\rm)} implies that of $(X,f_{0,\infty})$
since $N_{D_X}(\{x\},\epsilon,\delta)\subset N_{d}(x,\epsilon,\delta)$ for any $x\in X$ and $\epsilon>0$,
where $\delta>0$ is a sensitivity constant of $(\mathcal{K}(X), \bar{f}_{0,\infty})$.
However, the converse is not true.
Let $\mathscr{F}_{inf}$ be the collection of all infinite subsets of $\mathbf{N}$.
$(X,f_{0,\infty})$ is sensitive if and only if $(X,f_{0,\infty})$ is $\mathscr{F}_{inf}$-sensitive
by Theorem 4.1 in \cite{Shao18}. Note that the sensitivity of $(X,f_{0,\infty})$ does not necessarily imply that of
$(\mathcal{K}(X), \bar{f}_{0,\infty})$, even for a single map $f$ {\rm(}see, for example, Example 2.11 in \cite{Sharman10}{\rm)}.
Therefore, The $\mathscr{F}_{inf}$-sensitivity of $(X,f_{0,\infty})$ does not necessarily imply that of
$(\mathcal{K}(X), \bar{f}_{0,\infty})$.
\end{remark}

Since multi-$\mathscr{F}$-sensitivity implies $\mathscr{F}$-sensitivity and they are equivalent when $\mathscr{F}$ is a filter,
we immediately get the following result.

\begin{Corollary}
Assume that $\mathscr{F}$ is a filter. Then the following statements are equivalent:

{\rm(i)} $(X,f_{0,\infty})$ is $\mathscr{F}$-sensitive;

{\rm(ii)} $(X,f_{0,\infty})$ is multi-$\mathscr{F}$-sensitive;

{\rm(iii)} $(\mathcal{K}(X),\bar f_{0,\infty})$ is $\mathscr{F}$-sensitive;

{\rm(iv)} $(\mathcal{K}(X),\bar f_{0,\infty})$ is multi-$\mathscr{F}$-sensitive;

{\rm(v)} $(\mathcal{F}^1(X),\tilde{f}_{0,\infty})$ is $\mathscr{F}$-sensitive;

{\rm(vi)} $(\mathcal{F}^1(X),\tilde{f}_{0,\infty})$ is multi-$\mathscr{F}$-sensitive.
\end{Corollary}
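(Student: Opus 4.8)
The plan is to reduce everything to Theorem~\ref{sensitive} by showing that, for a filter $\mathscr{F}$, $\mathscr{F}$-sensitivity and multi-$\mathscr{F}$-sensitivity coincide for each of the three systems separately. Since Theorem~\ref{sensitive} already establishes that the multi-$\mathscr{F}$-sensitivity of $(X,f_{0,\infty})$, $(\mathcal{K}(X),\bar f_{0,\infty})$ and $(\mathcal{F}^1(X),\tilde f_{0,\infty})$ are equivalent, that is (ii)$\Leftrightarrow$(iv)$\Leftrightarrow$(vi), it suffices to prove the three pairings (i)$\Leftrightarrow$(ii), (iii)$\Leftrightarrow$(iv) and (v)$\Leftrightarrow$(vi), after which all six statements are chained together.

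The first direction, that multi-$\mathscr{F}$-sensitivity implies $\mathscr{F}$-sensitivity, holds for any Furstenberg family and needs no filter hypothesis: specializing the definition of multi-$\mathscr{F}$-sensitivity to $k=1$ gives exactly $\mathscr{F}$-sensitivity with the same constant $\delta$. This immediately yields (ii)$\Rightarrow$(i), (iv)$\Rightarrow$(iii) and (vi)$\Rightarrow$(v).

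The reverse direction is where the filter assumption enters. Suppose, say, $(X,f_{0,\infty})$ is $\mathscr{F}$-sensitive with constant $\delta$. Fixing $k\in\mathbf{Z^{+}}$, points $x_1,\cdots,x_k\in X$ and $\epsilon>0$, the definition gives $N_{d}(x_i,\epsilon,\delta)\in\mathscr{F}$ for each $i$. Because $\mathscr{F}$ is a filter we have $\mathscr{F}\cdot\mathscr{F}\subset\mathscr{F}$, so a finite induction shows that the intersection $\bigcap_{i=1}^{k}N_{d}(x_i,\epsilon,\delta)$ again lies in $\mathscr{F}$; hence $(X,f_{0,\infty})$ is multi-$\mathscr{F}$-sensitive with the same constant. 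The identical argument applies verbatim to $(\mathcal{K}(X),\bar f_{0,\infty})$ and $(\mathcal{F}^1(X),\tilde f_{0,\infty})$, producing (i)$\Rightarrow$(ii), (iii)$\Rightarrow$(iv) and (v)$\Rightarrow$(vi).

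Combining the two directions establishes (i)$\Leftrightarrow$(ii), (iii)$\Leftrightarrow$(iv) and (v)$\Leftrightarrow$(vi), and feeding these into the chain (ii)$\Leftrightarrow$(iv)$\Leftrightarrow$(vi) from Theorem~\ref{sensitive} closes the loop among all six conditions. There is no genuine obstacle here---the only point requiring care is recording that the filter property is used merely to pass from pairwise to finite intersections, and that this single-to-multi implication is the only place the hypothesis on $\mathscr{F}$ is invoked.
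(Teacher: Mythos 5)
Your proposal is correct and matches the paper's own reasoning: the paper likewise derives the corollary from Theorem~\ref{sensitive} by observing that multi-$\mathscr{F}$-sensitivity trivially implies $\mathscr{F}$-sensitivity (the case $k=1$) and that the filter property $\mathscr{F}\cdot\mathscr{F}\subset\mathscr{F}$ makes the two notions coincide for each of the three systems. Your write-up just spells out the finite-intersection induction that the paper leaves implicit.
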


Since $\kappa\mathscr{F}_{inf}$ is a filter, we have the following results.

\begin{Corollary}
The following statements are equivalent:

{\rm(i)} $(X,f_{0,\infty})$ is $\kappa\mathscr{F}_{inf}$-sensitive;

{\rm(ii)} $(X,f_{0,\infty})$ is multi-$\kappa\mathscr{F}_{inf}$-sensitive;

{\rm(iii)} $(\mathcal{K}(X),\bar f_{0,\infty})$ is $\kappa\mathscr{F}_{inf}$-sensitive;

{\rm(iv)} $(\mathcal{K}(X),\bar f_{0,\infty})$ is multi-$\kappa\mathscr{F}_{inf}$-sensitive;

{\rm(v)} $(\mathcal{F}^1(X),\tilde{f}_{0,\infty})$ is $\kappa\mathscr{F}_{inf}$-sensitive;

{\rm(vi)} $(\mathcal{F}^1(X),\tilde{f}_{0,\infty})$ is multi-$\kappa\mathscr{F}_{inf}$-sensitive.
\end{Corollary}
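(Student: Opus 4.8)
The plan is to recognize this Corollary as the specialization of the immediately preceding Corollary to the single family $\mathscr{F}=\kappa\mathscr{F}_{inf}$. That Corollary already asserts, for \emph{every} filter $\mathscr{F}$, the equivalence of $\mathscr{F}$-sensitivity and multi-$\mathscr{F}$-sensitivity across all three systems $(X,f_{0,\infty})$, $(\mathcal{K}(X),\bar f_{0,\infty})$ and $(\mathcal{F}^1(X),\tilde{f}_{0,\infty})$ (which in turn rests on Theorem \ref{sensitive}). Hence the whole task reduces to verifying the one hypothesis needed to invoke it, namely that $\kappa\mathscr{F}_{inf}$ is a filter in the sense defined above.

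First I would unwind the dual family. By definition $\kappa\mathscr{F}_{inf}=\{F\in\mathscr{P}:\mathbf{N}\setminus F\notin\mathscr{F}_{inf}\}$, and since $\mathscr{F}_{inf}$ consists of exactly the infinite subsets of $\mathbf{N}$, the condition $\mathbf{N}\setminus F\notin\mathscr{F}_{inf}$ is equivalent to $\mathbf{N}\setminus F$ being finite. So $\kappa\mathscr{F}_{inf}$ is precisely the cofinite family. To confirm it is a proper Furstenberg family I would note that it is hereditary upwards (any superset of a cofinite set is cofinite) and proper (indeed $\mathbf{N}\in\kappa\mathscr{F}_{inf}$ while $\emptyset\notin\kappa\mathscr{F}_{inf}$).

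Next I would check the filter condition $\kappa\mathscr{F}_{inf}\cdot\kappa\mathscr{F}_{inf}\subset\kappa\mathscr{F}_{inf}$. Given $F_1,F_2\in\kappa\mathscr{F}_{inf}$, both $\mathbf{N}\setminus F_1$ and $\mathbf{N}\setminus F_2$ are finite, and since $\mathbf{N}\setminus(F_1\cap F_2)=(\mathbf{N}\setminus F_1)\cup(\mathbf{N}\setminus F_2)$ is then finite, we get $F_1\cap F_2\in\kappa\mathscr{F}_{inf}$. This is exactly closure under pairwise intersection, which is the filter property. With $\kappa\mathscr{F}_{inf}$ thus established to be a filter, I would simply apply the preceding Corollary with $\mathscr{F}=\kappa\mathscr{F}_{inf}$ to obtain the equivalence of statements (i)--(vi). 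There is no genuine obstacle: the only content is the elementary remark that intersections of cofinite sets are cofinite, as all of the dynamical substance has already been carried out in Theorem \ref{sensitive} and the filter Corollary it supports.
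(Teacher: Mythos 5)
Your proposal is correct and matches the paper's proof exactly: the paper derives this Corollary in one line by noting that $\kappa\mathscr{F}_{inf}$ is a filter and invoking the preceding Corollary. Your only addition is to spell out the routine verification that $\kappa\mathscr{F}_{inf}$ is the cofinite family and is closed under intersection, which the paper leaves implicit.
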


By the fact that $(X,f_{0,\infty})$ is sensitive if and only if $(X,f_{0,\infty})$
is $\mathscr{F}_{inf}$-sensitive, together with Theorem 3.6 in \cite{Shao21},
we have the following result for an interval map.

\begin{Corollary}
The following statements are equivalent:

{\rm(i)} $(I,f)$ is sensitive;

{\rm(ii)} $(I,f)$ is $\mathscr{F}_{inf}$-sensitive;

{\rm(iii)} $(\mathcal{K}(I),\bar f)$ is sensitive;

{\rm(iii)} $(\mathcal{K}(I),\bar f)$ is $\mathscr{F}_{inf}$-sensitive;

{\rm(v)} $(\mathcal{F}^1(I),\tilde{f})$ is sensitive;

{\rm(vi)} $(\mathcal{F}^1(I),\tilde{f})$ is $\mathscr{F}_{inf}$-sensitive.
\end{Corollary}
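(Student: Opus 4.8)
The plan is to deduce the six-fold equivalence from two ingredients applied in turn to the three systems $(I,f)$, $(\mathcal{K}(I),\bar f)$ and $(\mathcal{F}^1(I),\tilde f)$: the coincidence of sensitivity with $\mathscr{F}_{inf}$-sensitivity for a single continuous system (Theorem 4.1 in \cite{Shao18}), and the transfer of sensitivity between an interval map and its induced hyperspace and fuzzified systems furnished by Theorem 3.6 in \cite{Shao21}.

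First I would handle the three ``vertical'' pairs (i)$\Leftrightarrow$(ii), (iii)$\Leftrightarrow$(iv) and (v)$\Leftrightarrow$(vi). Each of $f$, $\bar f$, $\tilde f$ is a continuous self-map of its phase space, so the equivalence of sensitivity and $\mathscr{F}_{inf}$-sensitivity applies to each. The implication $\mathscr{F}_{inf}$-sensitive $\Rightarrow$ sensitive is immediate since an infinite set is nonempty; for the converse I would record the (compactness-free) argument behind it. If, for some base point and some $\epsilon$, the set $N$ of separation times for the sensitivity constant $\delta$ were contained in $\{1,\dots,M\}$, then continuity of the first $M$ iterates at that point yields a radius $\epsilon'\in(0,\epsilon)$ for which the separation set $N_{d}(\cdot,\epsilon',\delta)$ is empty, contradicting sensitivity with constant $\delta$. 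Thus sensitivity with a constant $\delta$ upgrades to $\mathscr{F}_{inf}$-sensitivity with the same $\delta$, and this reasoning uses only continuity of the iterates, not compactness of the phase space.

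Next I would invoke Theorem 3.6 in \cite{Shao21}, which for an interval map gives the ``horizontal'' equivalences (i)$\Leftrightarrow$(iii)$\Leftrightarrow$(v): the substantive content is that sensitivity of the base system $(I,f)$ forces sensitivity of the hyperspace system $(\mathcal{K}(I),\bar f)$, a passage that fails for a general map (see the Remark following Theorem \ref{sensitive}) but holds on the interval, while the reverse passages are automatic from the containment $N_{D_X}(\{x\},\epsilon,\delta)\subset N_{d}(x,\epsilon,\delta)$ and its fuzzy analogue. Chaining (ii)$\Leftrightarrow$(i)$\Leftrightarrow$(iii)$\Leftrightarrow$(iv) together with (iii)$\Leftrightarrow$(v)$\Leftrightarrow$(vi) then links all six statements.

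The one point needing care is statement (vi): the sensitivity/$\mathscr{F}_{inf}$-sensitivity equivalence is customarily stated over a compact phase space, whereas $(\mathcal{F}^1(I),d_\infty)$ is complete but not compact. I expect this to be a formality rather than a genuine obstacle: either I apply the compactness-free form of the argument recorded above, or I bypass it by routing (vi) through the compact hyperspace, using Theorem 3.3 in \cite{Shao21} to obtain (iv)$\Leftrightarrow$(vi) and then closing via the already-established chain (iii)$\Leftrightarrow$(iv) and (iii)$\Leftrightarrow$(v). With this bookkeeping the corollary is a direct consequence of the two cited theorems.
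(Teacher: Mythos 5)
Your proposal is correct and follows essentially the same route as the paper, which deduces the corollary in one stroke from the equivalence of sensitivity with $\mathscr{F}_{inf}$-sensitivity (Theorem 4.1 in \cite{Shao18}) combined with the three-way equivalence of sensitivity for interval maps in Theorem 3.6 of \cite{Shao21}. Your extra care about statement (vi) --- noting that $(\mathcal{F}^1(I),d_\infty)$ is not compact and supplying either the compactness-free upgrade of sensitivity to $\mathscr{F}_{inf}$-sensitivity or the detour through Theorem 3.3 of \cite{Shao21} --- is a legitimate refinement of a point the paper passes over silently.
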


\section*{Acknowledgement}

This research was partially supported by the Natural Science Foundation of Jiangsu Province of China (No. BK20200435)
and the Fundamental Research Funds for the Central Universities (No. NS2021053).

\end{CJK*}

\end{document}